\def\@adminfootnotes{%
  \let\@makefnmark\relax  \let\@thefnmark\relax
  \ifx\@empty\@date\else \@footnotetext{\@setdate}\fi
  \ifx\@empty\@subjclass\else \@footnotetext{\@setsubjclass}\fi
  \ifx\@empty\@keywords\else \@footnotetext{\@setkeywords}\fi
  \ifx\@empty\thankses\else \@footnotetext{%
    \def\par{\let\par\@par}\@setthanks}%
  \fi
}
\newtheorem{theorem}{Theorem}[section]
\newtheorem{proposition}[theorem]{Proposition}
\newtheorem{prop}[theorem]{Proposition}
\newtheorem{corollary}[theorem]{Corollary}
\theoremstyle{definition}
\newtheorem{example}[theorem]{Example}
\theoremstyle{remark}
\newtheorem{remark}[theorem]{Remark}
\numberwithin{equation}{section}
\let\oldtocsection=\tocsection
\let\oldtocsubsection=\tocsubsection
\renewcommand{\tocsection}[2]{\hspace{0em}\oldtocsection{#1}{#2}}
\renewcommand{\tocsubsection}[2]{\hspace{1em}\oldtocsubsection{#1}{#2}}
\title[Entropy, cocycles, and their diagrammatics]{Entropy, cocycles, and their diagrammatics}
\author{Mee Seong Im}
\address{\parbox{\linewidth}{Department of Mathematics, Johns Hopkins University, Baltimore, MD 21218, USA\\ 
Department of Mathematics, United States Naval Academy, Annapolis, MD 21402, USA}}
\email{\href{mailto:meeseong@jhu.edu}{meeseong@jhu.edu}}
\author{Mikhail Khovanov} 
\address{\parbox{\linewidth}{Department of Mathematics, Johns Hopkins University, Baltimore, MD 21218, USA 
\\ 
Department of Mathematics, Columbia University, New York, NY 10027, USA}}
\email{\href{mailto:khovanov@jhu.edu}{khovanov@jhu.edu}}
\subjclass[2020]{Primary 94A17, 20J06, 18M10, 18M30;
Secondary 18B40, 37A20, 18G45}
\date{October 8, 2024}
\providecommand{\keywords}[1]{\textbf{\textit{Key words and phrases.}} #1}
\keywords{Entropy, infinitesimal dilogarithm, cocycles, planar networks, diagrammatics, group representations, affine group, Cathelineau-Kontsevich cocycle}
\begin{document}

\def\Aff{\mathsf{Aff}}
\def\AND{\mathsf{AND}}
\def\concatenate{\mathsf{concatenate}}
\def\Br{\mathsf{Br}}
\def\Gal{\mathsf{Gal}}
\def\gen{\mathsf{generators}}
\def\GL{\mathsf{GL}}
\def\SL{\mathsf{SL}}
\def\init{\mathsf{in}}
\def\t{\mathsf{t}}
\def\out{\mathsf{out}}
\def\I{\mathsf I}
\def\region{\mathsf{region}}
\def\PMI{\mathsf{PMI}}
\def\plane{\mathsf{plane}}
\def\R{\mathbb R}
\def\Q{\mathbb Q}
\def\Z{\mathbb Z}
\def\mc{\mathcal{c}}
\def\finite{\mathsf{finite}}
\def\infinite{\mathsf{infinite}}
\def\N{\mathbb N} 
\def\C{\mathbb C}
\def\sep{\mathsf{sep}}
\def\S{\mathbb S}
\def\SS{\mathbb S} 
\def\CP{\mathbb P}
\def\Ob{\mathsf{Ob}}
\def\op{\mathsf{op}}
\def\new{\mathsf{new}}
\def\old{\mathsf{old}}
\def\OR{\mathsf{OR}}
\def\AND{\mathsf{AND}}
\def\rat{\mathsf{rat}}
\def\rec{\mathsf{rec}}
\def\tail{\mathsf{tail}}
\def\coev{\mathsf{coev}}
\def\ev{\mathsf{ev}}
\def\id{\mathsf{id}}
\def\s{\mathsf{s}}
\def\S{\mathsf{S}}
\def\t{\mathsf{t}}
\def\start{\textsf{starting}}
\def\Notation{\textsf{Notation}}
\def\circleft{\raisebox{-.18ex}{\scalebox{1}[2.25]{\rotatebox[origin=c]{180}{$\curvearrowright$}}}}
\renewcommand\SS{\ensuremath{\mathbb{S}}}
\newcommand{\kllS}{\kk\llangle  S \rrangle} 
\newcommand{\kllSS}[1]{\kk\llangle  #1 \rrangle}
\newcommand{\klS}{\kk\langle S\rangle}  
\newcommand{\aver}{\mathsf{av}}  
\newcommand{\ophana}{\overline{\phantom{a}}}
\newcommand{\Bool}{\mathbb{B}}
\newcommand{\dmod}{\mathsf{-mod}}
\newcommand{\lang}{\mathsf{lang}}
\newcommand{\pfmod}{\mathsf{-pfmod}}
\newcommand{\primitive}{\mathsf{irr}}
\newcommand{\Bmod}{\Bool\mathsf{-mod}}  
\newcommand{\Bmodo}[1]{\Bool_{#1}\mathsf{-mod}}  
\newcommand{\Bfmod}{\Bool\mathsf{-fmod}} 
\newcommand{\Bfpmod}{\Bool\mathsf{-fpmod}} 
\newcommand{\Bfsmod}{\Bool\mathsf{-}\underline{\mathsf{fmod}}}  
\newcommand{\undvar}{\underline{\varepsilon}} 
\newcommand{\RLang}{\mathsf{RLang}}
\newcommand{\undotimes}{\underline{\otimes}}
\newcommand{\sigmaacirc}{\Sigma^{\ast}_{\circ}} 
\newcommand{\cl}{\mathsf{cl}}
\newcommand{\PP}{\mathcal{P}} 

\newcommand{\whA}{\widehat{A}}
\newcommand{\whC}{\widehat{C}}
\newcommand{\whM}{\widehat{M}}

\newcommand{\mcCinfalpha}{\mcC^{\infty}_{\alpha}}
\newcommand{\mathT}{\mathsf{T}}
\newcommand{\mathF}{\mathsf{F}}
\newcommand{\mcS}{\mathcal{S}}
\newcommand{\mcN}{\mathcal{N}}
\newcommand{\wmcN}{\widetilde{\mcN}}
\newcommand{\Net}{\mathsf{Net}}
\newcommand{\Catone}{\mcC_{\kk}}
\newcommand{\Cattwo}{\mcC_{\kk}^{\circ}}
\newcommand{\CatoneR}{\mcC_{\R}}
\newcommand{\CattwoR}{\mcC_{\R}^{\circ}}
\newcommand{\Catthree}{\widetilde{\mcC}_{\kk}}
\newcommand{\Catfour}{\widetilde{\mcC}^{\circ}_{\kk}}
\newcommand{\CatoneH}{{\mcC}_{H}}
\newcommand{\CattwoH}{\mcC_H^{\circ}}
\newcommand{\BFL}{\mcC_{\mathsf{Fin}}}

\let\oldemptyset\emptyset
\let\emptyset\varnothing

\newcommand{\undempty}{\underline{\emptyset}}
\newcommand{\undsigma}{\underline{\sigma}}
\newcommand{\undtau}{\underline{\tau}}
\def\basis{\mathsf{basis}}
\def\irr{\mathsf{irr}} 
\def\spanning{\mathsf{spanning}}
\def\elmt{\mathsf{elmt}}

\def\H{\mathsf{H}}
\def\I{\mathsf{I}}
\def\II{\mathsf{II}}
\def\l{\lbrace}
\def\r{\rbrace}
\def\o{\otimes}
\def\lra{\longrightarrow}
\def\Ext{\mathsf{Ext}}
\def\mf{\mathfrak} 
\def\mcC{\mathcal{C}}
\def\mcU{\mathcal{U}}
\def\mcT{\mathcal{T}}
\def\mcO{\mathcal{O}}
\def\Fr{\mathsf{Fr}}

\def\ovb{\overline{b}}
\def\tr{{\sf tr}} 
\def\str{{\sf str}} 
\def\det{{\sf det }} 
\def\tral{\tr_{\alpha}}
\def\one{\mathbf{1}}   

\def\lra{\longrightarrow}
\def\twoheadlra{\longrightarrow\hspace{-4.6mm}\longrightarrow}
\def\hooklra{\raisebox{.2ex}{$\subset$}\!\!\!\raisebox{-0.21ex}{$\longrightarrow$}}
\def\kk{\mathbf{k}}  
\def\gdim{\mathsf{gdim}}  
\def\rk{\mathsf{rk}}
\def\undep{\underline{\varepsilon}}
\def\mathM{\mathbf{M}}  

\def\CCC{\mathcal{C}} 

\def\Cob{\mathsf{Cob}} 
\def\Kar{\mathsf{Kar}}   

\newcommand{\brak}[1]{\ensuremath{\left\langle #1\right\rangle}}
\newcommand{\brakspace}[1]{\ensuremath{\left\langle \:\: #1\right\rangle}}

\newcommand{\oplusop}[1]{{\mathop{\oplus}\limits_{#1}}}
\newcommand{\ang}[1]{\langle #1 \rangle } 
\newcommand{\ppartial}[1]{\frac{\partial}{\partial #1}} 
\newcommand{\checkr}{{\bf \color{red} CHECK IT}}
\newcommand{\checkb}{{\bf \color{blue} CHECK IT}}
\newcommand{\checkk}[1]{{\bf \color{red} #1}}

\newcommand{\mcA}{{\mathcal A}}
\newcommand{\cZ}{{\mathcal Z}}
\newcommand{\sq}{$\square$}
\newcommand{\bi}{\bar \imath}
\newcommand{\bj}{\bar \jmath}
\newcommand{\FinProb}{\mathsf{FinProb}}

\newcommand{\undn}{\underline{n}}
\newcommand{\undm}{\underline{m}}
\newcommand{\undzero}{\underline{0}}
\newcommand{\undone}{\underline{1}}
\newcommand{\undtwo}{\underline{2}}

\newcommand{\cob}{\mathsf{cob}} 
\newcommand{\comp}{\mathsf{comp}} 

\newcommand{\Aut}{\mathsf{Aut}}
\newcommand{\Hom}{\mathsf{Hom}}
\newcommand{\Idem}{\mathsf{Idem}}
\newcommand{\Ind}{\mbox{Ind}}
\newcommand{\Id}{\textsf{Id}}
\newcommand{\End}{\mathsf{End}}
\newcommand{\iHom}{\underline{\mathsf{Hom}}}
\newcommand{\Bools}{\Bool^{\mathfrak{s}}}
\newcommand{\mfs}{\mathfrak{s}}
\newcommand{\blueline}{line width = 0.45mm, blue}

\newcommand{\drawing}[1]{
\begin{center}{\psfig{figure=fig/#1}}\end{center}}

\def\endomCempt{\End_{\mcC}(\emptyset_{n-1})}

\def\MS#1{\textbf{\color{NavyBlue}[MS: #1]}}
\def\MK#1{\textbf{\color{Red}[MK: #1]}}

\begin{abstract}  The first part of the paper explains how to encode a one-cocycle and a two-cocycle on a group $G$ with values in its representation by networks of planar trivalent graphs with edges labelled by elements of  $G$, elements of the representation floating in the regions, and suitable rules for manipulation of these diagrams. When the group is a semidirect product, there is a similar presentation via overlapping networks for the two subgroups involved. 

M.~Kontsevich and J.-L.~Cathelineau have shown how to interpret the entropy of a finite random variable and infinitesimal dilogarithms, including their four-term functional relations, via 2-cocycles on the group of affine symmetries of a line. 

We convert their construction into a diagrammatical calculus evaluating planar networks that describe morphisms in suitable monoidal categories. In particular, the four-term relations become equalities of networks analogous to associativity equations. The resulting monoidal categories complement existing categorical and operadic approaches to entropy. 
\end{abstract}

\maketitle
\tableofcontents


%
%

\section{Introduction}
\label{section:intro}

The concept of entropy has fascinated mathematicians over the past decades~\cite{Gromov13,Zorich11,CC09,Rioul18,Lein21_entro_div,Baez24_entropy}. The remark ``no one knows what entropy really is", attributed to von Neumann~\cite{Rioul18}, apparently still holds in the minds of at least some of us, about eighty years later. Categorical interpretations of entropy and of the foundations of probability theory are explored in~\cite{BFL11,Baez_blog_cat,Brad21,Voevodsky04,Voevodsky08,Spivak23,BB15,Vign20,Vigneaux19}. 

Defect networks with facets labelled by elements of a group $G$ appear throughout mathematical physics, see \cite{Tach20,BBFTGGPT,BuBa20,TKBB23}
and many other references.  Labeling domain walls in a topological quantum field theory (TQFT) by elements of a finite group $G$ and introducing 2-cocycles in relation to codimension two submanifolds (seams) where domain walls for $g$ and $h$ merge into a $gh$-wall is now common in condensed matter and TQFT theories. Surface $G$-networks appear as Poincar\'e duals of Moore--Segal encodings of $G$-equivariant 2D TQFTs~\cite{MS06} and they are implicit in Turaev's homotopy QFTs~\cite{Tur10}. Finite extensions of fields give other examples of 2D TQFTs with $G$-networks~\cite{IK21}. There is a large amount of literature on defects in TQFTs, see~\cite{FMT_top_symm_24} and references therein.  

In the present work we explain how to interpret the entropy of a finite random variable via a suitable monoidal category where morphisms can be represented by pairs of overlapping planar defect networks. Our approach builds on the explicit cohomological 
interpretation of entropy by  M.~Kontsevich~\cite{Kont02} and J.-L.~Cathelineau~\cite{Cath88,Cath_comple_90,Cath96,Cath07,Cath11} in terms of a suitable 2-cocycle on the group of affine symmetries of the real line. 

Entropy and its diagrammatical calculus have additive properties, as opposed to multiplicative properties of the TQFT invariants. $G$-cocycles that appear in physics typically take values in the multiplicative group $\R^{\ast}$ or $\C^{\ast}$ of the ground field, while the present paper discusses diagrammatical calculi for \emph{additive} 1-cocycles and 2-cocycles. (Exponential of the entropy, but as an element of the tropical semiring, appears in~\cite{CC09,MT14}.)  
    
\vspace{0.07in}

Let us now briefly survey the contents of the paper. 
In Section~\ref{sec_planar} we describe several variations on a diagrammatical calculus of planar networks where lines are labelled by elements of a group $G$ and in a vertex three lines can meet as long as the appropriate product of their labels or their inverses is $1\in G$. 

\begin{itemize}
\item Section~\ref{subsec_monoidal} defines $G$-flows and corresponding monoidal categories $\mcC_G^I$ and $\mcC_G$ (these two categories are isomorphic). 
\item 
In Section~\ref{subsec_monoidal_U} to a $G$-module $U$ there is assigned a monoidal category $\mcC_U$, where morphisms are represented by diagrams of $G$-networks with boundary and elements of $U$ can float in the regions of these networks. 
\item In Section~\ref{subsec_one_cocycle} to a $U$-valued one-cocycle $f:G\lra U$ we assign a  monoidal category $\mcC_{f}$. 
\item In Section~\ref{subsec_two_cocycle} to a $U$-valued normalized two-cocycle $c:G\times G\lra U$ we assign a monoidal category $\mcC_{c}$. 
\end{itemize}

The twisting from a 2-cocycle $c$ can be encoded by a boundary line that absorbs and emits morphisms in $\mcC_{c}$, and likewise for a 1-cocycle $f$. It is shown as the blue horizontal line in the figures in Section~\ref{subsec_two_cocycle}.

\vspace{0.07in}

In Section~\ref{subsec_cross_prod} we introduce diagrammatics for a cross-product group, using two types of lines to label the elements of the two subgroups and their interactions. In Section~\ref{subsec_cross_mod} we sketch related diagrammatics for crossed modules and for non-abelian cocycles and list several examples of the latter.

\vspace{0.07in} 

Section~\ref{section:entropy} reviews cohomological interpretation of entropy of a finite random variable due to Kontsevich and Cathelineau and Cathelineau's theory of infinitesimal dilogarithms. 
This interpretation uses a 2-cocycle on the group $\Aff_1(\R)$ of affine symmetries of the real line (for entropy), and on the group $\Aff_1(\kk)$ of affine transformations over a field $\kk$ for the infinitesimal dilogarithm. 

\vspace{0.07in} 

Homological interpretation of Shannon's entropy was pointed out by 
Cathelineau in \cite{Cath96} based on his earlier work~\cite{Cath88} and by Kontsevich~\cite{Kont02}.
Entropy and its functional equation \eqref{eq_H} can be interpreted and generalized via the infinitesimal dilogarithms of Cathelineau~\cite{Cath88,Cath96,Cath11} and Bloch--Esnault~\cite{BE03}. Those papers consider the analogue of the entropy functional equation over any field. 
Furthermore, detailed relations between the infinitesimal dilogarithms for a field $\kk$ and one- and two-cocycles on various groups associated to $\kk$ are worked out in~\cite{Cath11}, see also~\cite{Kont02,Garou09,Unv21,Park07} and follow-up papers for related developments. 

\vspace{0.07in}

The main goal of the present paper is to reinterpret some of Cathelineau's and Kontsevich's work via diagrammatics for suitable monoidal categories. Affine symmetry group $\Aff_1(\kk)$ is a cross-product of the additive $\kk$ and the multiplicative $\kk^{\times}$ groups of the field $\kk$, so we utilize overlapping networks for these two groups to build monoidal categories $\Catone$ and $\Cattwo$ which are then used for an interpretation of entropy (when $\kk=\R$) and of the infinitesimal dilogarithm. Diagrammatics of 2-cocycles from Section~\ref{subsec_two_cocycle} is used as well. 

Our approach is developed in Section~\ref{sec_entropy_diagramm}. We first describe diagrammatics for Cathelineau's infinitesimal dilogarithm over any field $\kk$. Then, in Section~\ref{subsec_diag_ent}, we specialize to the ground field $\R$ and the entropy functional and corresponding categories $\CatoneH$ and $\CattwoH$. 
 A subcategory of $\CatoneH$ is equivalent the Baez--Fritz--Leinster category of finite random variables~\cite{BFL11}. One can pose a rather speculative question whether the bigger categories $\CatoneH,\CattwoH$ may give some hints about the nature of probability beyond its familiar classical and quantum interpretations. 
 
\vspace{0.07in}
   
A busy reader may want to start with Sections~\ref{section:entropy} and~\ref{sec_entropy_diagramm}, devoted to entropy, infinitesimal dilogarithms and their diagrammatics, and flip back to earlier sections as needed. 

\vspace{0.1in}

{\bf Acknowledgments:} The authors are grateful to Sri Tata for pointing out papers \cite{TKBB23,BuBa20} and an interesting discussion. 
The authors are also thankful to Danny Calegari, Andrew Dudzik, Slava Krushkal, Lev Rozansky, and Joshua Sussan for insightful conversations. M.S. Im would like to acknowledge research members at the Naval Research Laboratory in Washington, DC for interesting discussions. The authors thank Yale University, Dublin Institute for Advanced Studies, Mathematisches Forschungsinstitut Oberwolfach, Hausdorff Research Institute for Mathematics, Okinawa Institute of Science and Technology Graduate University, the Simons Foundation, and R\'enyi Institute for providing a productive work environment. M.S. Im acknowledges partial support from the Naval Research Laboratory. 
M.K. would like to acknowledge partial support from NSF grant DMS-2204033 and Simons Collaboration Award 994328. 

%
%

\section{Planar networks of group-valued flows}
\label{sec_planar}
 

\subsection{Diagrammatics of \texorpdfstring{$G$}{G}-flows and a monoidal category for a group \texorpdfstring{$G$}{G}}\label{subsec_monoidal}

$\quad$

Fix a group $G$ and consider trivalent graph networks in the plane $\R^2$ where edges of graphs are co-oriented in the plane, each edge is labelled by an element of the group $G$ and at each trivalent vertex there is a flow condition shown in Figure~\ref{fig2_001}. 

\input{fig2_001}

Co-orientation is an orientation of the normal bundle to a submanifold. 
Figure~\ref{fig2_001} shows, on the left, an individual edge with a co-orientation and a $G$-label $\sigma$, and in the middle and on the right -- two possible vertices with co-orientations and labels of adjacent edges. 
One can think of each vertex as merging two compatibly co-oriented lines $\sigma$ and $\tau$ into the line $\sigma\tau$ or $\tau\sigma$, depending on where co-orientations point. Orientation of the ambient $\R^2$ is fixed. Circles, equipped with co-orientation and labels, are allowed in the networks. 

We call these \emph{type I} \emph{$G$-flow networks}  or just \emph{type I} \emph{$G$-networks}. These networks are \emph{closed}, that is, have no boundary points. Denote the set of closed type I $G$-networks by $\Net_G^I$.  

If smooth structure is important, one can make $\sigma$ and $\tau$ lines tangent at a vertex in Figure~\ref{fig2_001}, to emphasize that they are merging into a single line (or that a single line splits into two), making all three tangency rays at a vertex belong to a single tangent line, see a later Figure~\ref{fig3_023}, for instance.  

\input{fig3_020a}

\vspace{0.07in} 

Given a $G$-network $\mcN\in \Net_G^I$ in the plane and point $p$ in the complement $\R^2\setminus \mcN$, define \emph{the winding number} $\omega(p,\mcN)$ \emph{of $\mcN$ around $p$} or \emph{the index of $p$ relative to $\mcN$} as follows, see Figure~\ref{fig3_020a}. Pick a ray going off from $p$ to infinity in the plane that intersects $\mcN$ transversally in several points (none of which are vertices and flip points; see the paragraph before Remark~\ref{remark:flip_point_lollipop} for the definition of a flip point). Multiply the $G$-labels of edges at the intersection points to 
\begin{equation}\label{eq_wind_n}
\omega(p,\mcN) \ := \ \sigma_1^{\varepsilon_1}\sigma_2^{\varepsilon_2} \cdots \sigma_n^{\varepsilon_n}\in G,
\end{equation}
where $\sigma_1,\ldots, \sigma_n$ are the $G$-labels of edges along the ray, reading from the infinite point towards $p$ and $\varepsilon_i=1$ if at the $i$-th intersection point the co-orientation is towards the infinite point of the ray and $\varepsilon_i=-1$ if the co-orientation is towards $p$. 

It is clear that $\omega(p,\mcN)$ does not depend on the choice of a generic ray out of $p$ and is an invariant of $p$ and of the region of $\R^2\setminus \mcN$ where $p$ is located.
A \emph{region} of a network is a connected component of $\R^2\setminus \mcN$. Denote by $r(\mcN)$ the set of regions of $\mcN$. 
The $G$-winding number can be viewed as a function 
\begin{equation}
\omega \ : \ r(\mcN) \lra G. 
\end{equation} 
 For points $p$ \emph{outside} $\mcN$, that is, when a ray out of $p$ to infinity disjoint from $\mcN$ exists, $\omega(p,\mcN)=1\in G$. The corresponding region is called \emph{the outside} or \emph{outer region}.  

\input{fig3_011}

\input{fig3_010}

\input{fig3_009}

Consider transformations of networks shown in Figures~\ref{fig3_011},~\ref{fig3_010} and \ref{fig3_009}. 
In Figure~\ref{fig3_011} there is bijection between the regions or the networks on the left and right, and this bijection preserves winding numbers of the regions. Likewise, there are bijections between the regions of the three networks in Figure~\ref{fig3_010}, preserving winding numbers of regions. Figure~\ref{fig3_009} transformations change the number of regions (or, in the first case, can change topology of one of the regions). We also consider Figure~\ref{fig_K3} move. 

\input{fig_K3}

\vspace{0.07in}

We are interested in having various equivalence relations on these networks, including the ones for identifying networks in Figures~\ref{fig3_011}--\ref{fig_K3} as well as an equivalence for Figures~\ref{fig3_011}--\ref{fig3_009} only, without the relation in Figure~\ref{fig_K3} holding, see Section~\ref{subsec_one_cocycle}. A convenient framework for that is the \emph{universal construction of topological theories} \cite{CFW10,Kh20_univ_const_two,IKV23,IK_22_linear,GIKKL23,IK-top-automata,IK_TQFTjourney23,IK23,IZ}. In our situation the networks are planar and one should use the universal construction for monoidal categories which are not necessarily symmetric. This setup is considered in \cite{KL21,IKV23}. 

We apply the universal construction to closed $G$-networks and, for now, do the simplest case. Pick a one-element set, say $\{0\}$, and declare that any closed $G$-network evaluates to the element of that set (to $0$). Denote this evaluation 
\begin{equation}\label{eq_alpha_zero}\alpha_0 \  : \ \Net^I_G \lra \{0\}.
\end{equation}
To build a category $\mcC_G^I$ from $G$-networks and $\alpha_0$, consider a generic cross-section of a network $\mcN$ by a horizontal line. The line intersects $\mcN$ in a number of points, which inherit labels in $G$ and co-orientations from $\mcN$. We can denote a point with label $\sigma\in G$ and co-oriented pointing to the left of $\R$ by $X^+_{\sigma}$ and a $\sigma$-labelled point co-oriented to the right by $X^-_{\sigma}$. 

Then $X^+_{\sigma},X_{\sigma}^-$ are the generating objects of $\mcC_G^I$. Given a sequence $\undsigma=(\undsigma_1,\ldots, \undsigma_n)$ of pairs (element of $G$, sign), that is, $\undsigma_i = (\sigma_i,\varepsilon_i)$, 
$\sigma_i\in G$, $\varepsilon_i\in \{+,-\}$, define the object 
\begin{equation}\label{eq_object_X}
X_{\undsigma} \ := \ X_{\sigma_1}^{\varepsilon_1}\otimes X_{\sigma_2}^{\varepsilon_2}\otimes \cdots \otimes X_{\sigma_n}^{\varepsilon_n}.
\end{equation}
Define the \emph{weight} of the sequence $\undsigma$ by 
\begin{equation}
    \brak{\undsigma} \ := \ \sigma_1^{\varepsilon_1}\sigma_2^{\varepsilon_2}\cdots \sigma_n^{\varepsilon_n} \in G. 
\end{equation}
(Diagrammatically, we merge the sequence $\undsigma$ into a single left-pointed dot (object), see Figure~\ref{fig2_002}.)

\input{fig2_002}

Morphisms from $X_{\undsigma}$ to $X_{\undtau}$ are given by all networks $D$ with boundary, $\partial_0 D =X_{\undsigma} $, $ \partial_1 D =X_{\undtau} $, modulo universal relations: two networks $D_1,D_2$ are equal if and only if no matter how they are closed on the outside into closed networks $\mcN_1,\mcN_2$, the evaluations are equal, $\alpha_0(\mcN_1)=\alpha_0(\mcN_2)$, see Figure~\ref{fig_F1}.

\vspace{0.07in} 

\input{fig_F1}

Evaluation $\alpha_0$ is constant on all closed diagrams, so the above condition is vacuous. We obtain that diagrams $D_1,D_2$ with the same bottom sequence $\undsigma$ and same top sequence  $\undtau$ (diagrams from $\undsigma$ to $\undtau$) give equal morphisms in $\mcC_G^I$, and any two morphisms from $X_{\undsigma}$ to $X_{\undtau}$ are equal. 

Merges and splits in Figure~\ref{fig2_001} and their reflections about the $x$-axis give isomorphisms \begin{equation}
    X^+_{\sigma}X^+_{\tau}\cong X^+_{\sigma\tau}, \ \ \ 
    X^-_{\sigma}X^-_{\tau}\cong X^-_{\tau\sigma}.
\end{equation} 

\input{fig3_023}

A possible isomorphism $X^+_{\sigma}\cong X_{\sigma^{-1}}^-$ is shown in Figure~\ref{fig3_023} on the right. It is convenient to encode it by a special diagram with a marked point on a line where $\sigma$-line is turned into a $\sigma^{-1}$-line and the co-orientation is reversed. We call this reversal mark a \emph{flip point}.  

\begin{remark}
\label{remark:flip_point_lollipop}
In general, one needs to be careful with flip points. There are four ways to define a flip point by inserting a $1$-labelled \emph{lollipop} of a line and a circle, both labelled $1\in G$, see Figure~\ref{fig_A1}.

\input{fig_A1}

 The circle edge can merge with the $1$-labelled edge into itself in two possible ways and the lollipop of $1$-edges can be inserted on the opposite side. These four choices can be reduced to two choices using the orientation of $\R^2$, but one needs to either pick between the two or check that the resulting morphism does not depend on the choice. For our present construction it is the latter case (since the evaluation does not depend on the diagram). In the former case,  a co-orientation can be added to the flip point to indicate which diagram is selected for the definition. Equality of morphisms is determined by the universal construction, so one needs to examine whether various choices of a lollipop result in the same evaluation of the diagram. Evaluation $\alpha_0$ in \eqref{eq_alpha_zero} takes values in the one-element set, so any two morphisms with the same source object and the same target object are equal in the quotient category. In particular, choice of a lollipop for this evaluation does not matter. 
\end{remark} 

\input{fig3_015}

Two consecutive flip points along an edge can be cancelled, see 
Figure~\ref{fig3_015} on the right, which is immediate from the degeneracy of $\alpha_0$. 

We sometimes write $X_{\sigma}$ in place of $X^+_{\sigma}$, for convenience. 
From the discussion above we see that any object $X\in \Ob(\mcC_G^I)$ is isomorphic to $X_{\sigma}$, for a unique $\sigma\in G$. The object $X_{\undsigma}$ in \eqref{eq_object_X} is isomorphic to $X_{\sigma}$ for $\sigma=\sigma_1^{\varepsilon_1}\cdots \sigma_n^{\varepsilon_n}$, where $\sigma_i^-$ stands for $\sigma_i^{-1}$ and $\sigma_i^+$ for $\sigma_i$, by merging the $n$ strands into one and adding flip points when needed to match co-orientations. Homs are given by 
\[
\Hom_{\mcC_G^I}(X_\sigma,X_{\tau}) = 
\begin{cases} 
\{\id_{X_{\sigma}}\} &  \text{if} \ \ \sigma=\tau, \\
\emptyset &  \text{otherwise}.
\end{cases}
\]
That is, the identity morphism is the only endomorphism of  $X_{\sigma}$, and there are no morphisms between $X_{\sigma}$ and  $X_{\tau}$ if $\sigma\not=\tau$. 

The category $\mcC_G^I$ is monoidal, with the tensor product given by placing diagrams next to each other. In our diagrammatics the unit object $\one$ is not represented by a dot on a line and instead can be inserted anywhere on the line. The isomorphism $\one\cong X_1$ and its inverse can be given by a lollipop morphism, see Figure~\ref{fig_F3} (for $\mcC_G^I$, the choice of lollipop convention does not matter). 

\input{fig_F3}

\vspace{0.07in} 

Category $\mcC_G^I$ is pivotal~\cite{Selinger11}, with the dual  $(X_{\sigma}^+)^{\ast}:=X^-_{\sigma}$ and duality morphisms given by cups and caps, see Figure~\ref{fig_F4_1}. Alternatively, one can set $X_{\sigma}^{\ast}:=X_{\sigma^{-1}}$, with the rigidity (U-turns) morphisms for the pivotal structure given by lollipop diagrams, see Figure~\ref{fig_A2}. For the category $\mcC_G^I$, due to the triviality of the evaluation function, the choice of duality morphisms once the formula $X_{\sigma}^{\ast}:=X_{\sigma^{-1}}$ is fixed is unique, and the four maps in Figure~\ref{fig_A2} define the same morphism in $\mcC_G$. 

\vspace{0.07in}

\input{fig_F4_1}

\input{fig_A2}

We summarize our observations on the category $\mcC_G^I$. 
\begin{prop}
    Category $\mcC_G^I$ is a pivotal monoidal category with isomorphism classes of objects parameterized by elements of $G$, and hom sets between isomorphic (respectively, non-isomorphic) objects being one-element sets (respectively, the empty set). 
\end{prop}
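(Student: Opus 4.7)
My plan is to reorganize the observations in Section~\ref{subsec_monoidal} around one technical lemma, \emph{weight invariance}: for any $G$-network diagram $D \colon X_\undsigma \to X_\undtau$ one has $\brak{\undsigma} = \brak{\undtau}$ in $G$. To establish it I would sweep a generic horizontal line through $D$ and verify that every elementary transition preserves the signed product. A merge/split vertex of Figure~\ref{fig2_001} in its left-coorientation form replaces adjacent $(\sigma,+),(\tau,+)$ with $(\sigma\tau,+)$, preserving weight; in its right-coorientation form it replaces $(\sigma,-),(\tau,-)$ with $(\tau\sigma,-)$, again preserving weight, and analogous checks handle the other coorientation patterns forced by the flow condition at a type~I vertex. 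A flip point replaces $(\sigma,+)$ by $(\sigma^{-1},-)$, whose weight is still $\sigma$. Planar isotopy away from vertices and flip points leaves the cross-section sequence unchanged, and bubbles disjoint from the slice contribute nothing.

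With weight invariance in hand, the parameterization and hom-set claims follow directly. For surjectivity of $\sigma \mapsto [X_\sigma^+]$ onto the set of isomorphism classes, I would exhibit an explicit isomorphism $X_\undsigma \cong X_{\brak\undsigma}^+$ by first using flip points to turn all coorientations positive and then iterating the merge of Figure~\ref{fig2_001}; the inverse is the corresponding composition of splits and flips, with cancellation guaranteed by Figure~\ref{fig3_009} and Figure~\ref{fig3_015}. For injectivity and for the hom-set description, weight invariance shows $\Hom_{\mcC_G^I}(X_\sigma^+, X_\tau^+) = \emptyset$ whenever $\sigma \neq \tau$, since no diagram with such boundary exists at all. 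When $\sigma = \tau$ the identity diagram witnesses one morphism, and the universal construction based on the constant evaluation $\alpha_0$ (Figure~\ref{fig_F1}) collapses every pair of diagrams with matching source and target to the same morphism, producing a singleton hom set. The monoidal structure is then automatic: horizontal juxtaposition defines $\otimes$, the empty sequence is the unit, and all coherence isomorphisms are forced since hom spaces are either empty or singletons.

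For pivotality I would set $(X_\sigma^+)^* := X_\sigma^-$ with cups and caps as in Figure~\ref{fig_F4_1}, extend by monoidality, and observe that every axiom---the zigzag (triangle) identities, naturality of evaluation and coevaluation, monoidality of the dual functor, and the coherent pivotal natural isomorphism $\id \xrightarrow{\sim} (-)^{**}$---reduces to an equality of morphisms with matching source and target, which is automatic from the previous paragraph. The main obstacle in the whole argument is the combinatorial bookkeeping behind weight invariance: one must make sure that the list of local events a generic horizontal slice can undergo while sweeping through a planar $G$-network is exhaustive, so that the weight really is a locally constant, and therefore globally constant, function of slice height.
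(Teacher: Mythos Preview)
Your proposal is correct and follows the same line as the paper, which in fact has no formal proof block here: the proposition is stated as a summary of the preceding observations. The ingredients you use---constant evaluation $\alpha_0$ forcing any two same-boundary diagrams to coincide, the merge/flip isomorphism $X_{\undsigma}\cong X_{\brak{\undsigma}}^+$, and weight invariance giving emptiness of $\Hom(X_\sigma^+,X_\tau^+)$ for $\sigma\neq\tau$---are exactly what the paper relies on, and your sweeping argument for weight invariance is just an explicit form of what the flow conditions at vertices (Figure~\ref{fig2_001}) and the winding-number discussion already encode.

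One small wrinkle: in $\mcC_G^I$ the flip point is not a primitive generator but is \emph{defined} via the lollipop of Figure~\ref{fig3_023}; you should either treat it that way in your sweep (a lollipop contributes a $1$-labelled circle and edge, which manifestly preserves weight) or note that you are implicitly passing to the isomorphic type~II category $\mcC_G$ where flips are primitive. Either fix is immediate and does not affect the argument.
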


\begin{remark}\label{rmk_comm_G}
Objects $X^+_{\sigma}X^+_{\tau}$ and $X^+_{\tau}X^+_{\sigma}$ are isomorphic if and only if $\sigma$ and $\tau$ commute. 
Category $\mcC_G^I$ is symmetric if and only if $G$ is abelian, and the symmetric structure can be described as shown in Figure~\ref{fig_F5}. 

\input{fig_F5}
\end{remark}

\input{fig_C2}

\input{fig3_016}

Next, we use flips to define trivalent vertices where the three co-orientations all go clockwise or all go counter-clockwise around a vertex, see Figure~\ref{fig_C2}. We call these \emph{type II} vertices, and the ones with co-orientations as in Figure~\ref{fig2_001} \emph{type I} vertices. One needs to check independence of this definition from the choice of an edge where to put a flip. In the present case, it is immediate, since the evaluation function for the universal construction is constant, but in more complicated cases independence requires verification.

With this definition, there is no restriction on possible triples of co-orientations at vertices. Consequently, 
at a trivalent vertex, one or more edge co-orientations may be reversed, by adding a flip point on the edge and replacing label $\sigma$ by $\sigma^{-1}$ near the vertex, see Figure~\ref{fig3_016} for a particular choice of co-orientations. As a special case of these transformations, a flip point can slide across a vertex, as shown in Figure~\ref{fig3_017}.

\input{fig3_017}

Figure~\ref{fig3_015} (on the right) and 
Figure~\ref{fig3_016} transformations allow to remove flip points completely from an edge and from a circle.  Adding a flip point near one endpoint of an edge and then removing it via the opposite endpoint allows to convert an edge without flip points to an oppositely co-oriented edge, with the label replaced by its inverse, see Figure~\ref{fig3_019}. An edge labelled $1\in G$ can be removed, possibly adding flip points at one of both of its endpoints, see Figure~\ref{fig3_019}. 
Co-orientation and the label of the circle can be reversed, see Figure~\ref{fig3_013} bottom row. 

\vspace{0.1in}

\input{fig3_019}

Define  \emph{type II $G$-networks} by allowing \emph{flip points} and vertices of arbitrary type as have just been discussed. The evaluation function $\alpha_0$ on these more general networks is still defined to be constant. Denote the monoidal category for the universal construction via $\alpha_0$ for type II networks by $\mcC_G$. This category is equivalent (even isomorphic) to $\mcC_G^I$ as a pivotal monoidal category. In particular, sets of objects of the two categories are identical: finite sequences of pairs (element of $G$, sign).

Denote the set of $G$-networks of type II by $\Net_G^{II}$ or just by $\Net_G$. 
Any type I network is also a type II network. 
 
Allowing the possibility of a different evaluation of closed networks, one can consider planar networks of either type (I or II) built from the above elementary diagrams (trivalent vertices, flip points, edges with labels and co-orientations) modulo isotopies but no other relations. The relations would then be defined given an evaluation. 

\begin{remark} Category $\mcC_G$ is equivalent to a set-theoretic version of the monoidal category in~\cite[Example 2.3.6]{EGNO15}.
The set-theoretic version has objects $\delta_g$, over elements $g\in G$, tensor product $\delta_g\otimes \delta_h = \delta_{gh}$ and $\Hom(\delta_g,\delta_h)=\delta_{g,h}$ (where $\delta_{g,h}$ is a one-element set if $g=h$ and the empty set otherwise). In our case there are more objects, but isomorphism claases of objects are parametrized by $G$.  
\end{remark}


\subsection{A monoidal category from a \texorpdfstring{$G$}{G}-module \texorpdfstring{$U$}{U}}\label{subsec_monoidal_U}

$\quad$

Suppose $U$ is a left $\Z[G]$-module. 
Consider $G$-flow networks as above, either type I or type II, and additionally allow elements of $U$, shown as dots with labels from $U$, to float in the regions of the plane. Denote the network $\mcN$ together with floating dots in the regions by $\wmcN$, see Figure~\ref{fig3_018} for an example. We call  $\wmcN$ a $U$-network or a $(G,U)$-network. Denote by $\mcN$ the underlying $G$-network, given by removing floating points and their labels. 

\input{fig3_018}

A floating point or dot $p$ with a label $x\in U$ has the $G$-winding number $\omega(p,\mcN)\in G$ relative to the network $\mcN$, see~\eqref{eq_wind_n}. Define 
\begin{equation}\label{eq_winding}
    \alpha_U(p,\mcN) \ := \ \omega(p,\mcN)\, x \in U, 
\end{equation}
that is, we apply the group element $\omega(p,\mcN)$, the $G$-winding number of $p$, to element $x$ in $U$. Element $\alpha(p,\mcN)$ depends only on the region where $p$ floats and on its label $x$. We may also write $\alpha$ in place of $\alpha_U$  if $U$ is fixed. 

For a $U$-network $\wmcN$ define the evaluation 
\begin{equation}\label{eq_def_alpha}
\alpha_U(\wmcN) \ := \ \sum_{p\in P(\wmcN)}\alpha_U(p, \mcN) \ = \ \sum_{p\in P(\wmcN)}\omega(p,\mcN)\, x_p \in U,
\end{equation} 
where $P(\wmcN)$ is the set of floating dots of $\wmcN$. The evaluation is given by summing over all floating dots $p$ their contributions, which are labels $x_p$ twisted by $G$-winding numbers of the points. An example of an evaluation can be found in the caption for Figure~\ref{fig3_018}. The evaluation $\alpha_U$ is additive, $\alpha_U(\wmcN_1\sqcup \wmcN_2)=\alpha_U(\wmcN_1)+\alpha_U(\wmcN_2)$, where $\sqcup$ denotes placing two diagrams next to each other. 

\vspace{0.07in} 

We now use evaluation $\alpha_U$ to define a monoidal category 
$\mcC_{U}=\mcC_{G,U}$ as usual in the universal construction. We use type II networks (using type I networks results in an isomorphic category). 

Objects of $\mcC_U$ are generic cross-sections of the network above, thus they are in a bijection with objects of $\mcC_G$, which are parameterized by sequences $\undsigma$, see formula \eqref{eq_object_X}. 
We define objects $X_{\sigma}$ and $X_{\undsigma}$ of $\mcC_{U}$ in the same way as for $\mcC_G$.  

Networks $\mcN\in \Net_G$ of $G$-flows without dots are a special case of networks $\Net_U=\Net_{G,U}$ of $G$-flows with $U$-labelled dots. The inclusion of sets $\Net_G\subset \Net_U$ is compatible with evaluations, since both $\alpha_0$ and $\alpha_U$ evaluate networks in $\Net_G$ to $0\in U$, where we choose the 
obvious inclusion $\{0\}\subset U$ to relate the two evaluations: 

\vspace{0.05in}

\begin{center}
\input{comm_diag_001}
\end{center}

\vspace{0.05in}

To define morphisms in $\mcC_U$, consider diagrams $D$  of  $G$-flow networks with $U$-labelled dots with boundary  $\partial_0 D=\undsigma, \partial_1 D =\undtau$ for some sequences $\undsigma,\undtau$. 
Denote corresponding objects of $\mcC_U$ by 
$X=X_{\undsigma},$ $Y=X_{\undtau}$,  see Figure~\ref{fig_F6} for an example. 
Denote the set of such diagrams by $\Net_U(X,Y)$.

\input{fig_F6}

\vspace{0.07in}

Each diagram $D\in \Net_U(X,Y)$ defines a morphism, denoted $[D]$ (alternatively, denoted $\alpha_U(D)$) from $X$ to $Y$ in $\mcC_U$.  
Two diagrams $D_1,D_2$ with the same boundary $(X,Y)$ define the same morphism, $[D_1]=[D_2]$, if for any way to close them up via some diagram $D'$ the two evaluations are equal, $\alpha_U(D_1D')=\alpha_U(D_2D')$, also see Figure~\ref{fig_F1}, picture on the right, with $\alpha_U$ replacing $\alpha_0$. 

\vspace{0.07in} 

Category $\mcC_U$ has at least the following relations. 
Relations in Figures~\ref{fig3_011}, \ref{fig3_010} and \ref{fig3_009} hold in $\mcC_U$ (replace arrows by equalities). Flip points and trivalent vertices of new types are defined as earlier,  see Figures~\ref{fig3_023} and~\ref{fig_C2} and relations in Figures~\ref{fig3_016}, \ref{fig3_017}, and~\ref{fig3_019} hold.  

\input{fig3_012}

Two dots floating in a region can be merged into a single dot, see Figure~\ref{fig3_012} top right. A $\sigma$-circle around a dot $x$ can be converted to the dot $\sigma^{\pm 1}(x)$ depending on co-orientation of the circle, see Figure~\ref{fig3_012} bottom row. 

\input{fig3_013}

Dot labelled $x$ can move through a $\sigma$-line, being twisted by $\sigma^{\pm 1}$, depending on co-orientation of the line, see Figure~\ref{fig3_013}.

\begin{prop} \label{prop_cat_U}
    Category $\mcC_U$ is a pivotal monoidal category. Any object of $\mcC_U$ is isomorphic to $X_{\sigma}$, for a unique $\sigma\in G$. The hom spaces between these objects are   
    \begin{equation}
    \Hom_{\mcC_U}(X_\sigma,X_{\tau}) =
    \begin{cases} 
  U  &  \text{if} \ \ \sigma=\tau, \\
\emptyset &  \text{otherwise}.
\end{cases}
\end{equation}
\end{prop}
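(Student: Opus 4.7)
The plan is to derive the three claims (pivotal monoidal structure, object classification, hom formula) by lifting the analogous facts for $\mcC_G$ along the compatibility of $\alpha_U$ with $\alpha_0$ under the inclusion $\{0\}\hookrightarrow U$, and then analyzing how floating $U$-dots interact with $G$-network reductions.

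For the pivotal monoidal structure, the tensor product on objects is concatenation of sequences and on morphisms is horizontal juxtaposition of diagrams, with the empty sequence as the unit; additivity of $\alpha_U$ over disjoint unions makes this well-defined on equivalence classes. The duality data from $\mcC_G$ (either cups/caps $X_\sigma^{+\ast}=X_\sigma^-$, or the flip-rigged U-turns $X_\sigma^{\ast}=X_{\sigma^{-1}}$ of Figures~\ref{fig_F4_1} and~\ref{fig_A2}) is imported verbatim, and the snake identities reduce to checking that diagrams related by the relevant isotopies evaluate identically under $\alpha_U$. This holds because the isotopies preserve the $G$-winding number of each region in which a floating dot sits, so the weighted sum in~\eqref{eq_def_alpha} is unchanged. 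Existence in the object classification is then the same merge-and-flip argument as for $\mcC_G^I$, giving $X_{\undsigma}\cong X_{\brak{\undsigma}}$; uniqueness of the exponent will follow from the hom formula, since an isomorphism requires a nonempty hom set.

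For $\sigma\neq\tau$, any $D\in\Net_U(X_\sigma,X_\tau)$ has an underlying $G$-diagram obtained by erasing all floating dots, and this represents a morphism in $\Hom_{\mcC_G}(X_\sigma,X_\tau)$. That hom set was shown to be empty in Section~\ref{subsec_monoidal}; hence no such $G$-diagram exists, $\Net_U(X_\sigma,X_\tau)=\emptyset$, and $\Hom_{\mcC_U}(X_\sigma,X_\tau)=\emptyset$. This simultaneously forces uniqueness of $\sigma$ in the object classification.

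For $\sigma=\tau$, define $\phi\colon U\to\Hom_{\mcC_U}(X_\sigma,X_\sigma)$ by $\phi(u)=[D_u]$, where $D_u$ is the diagram consisting of a single vertical $\sigma$-strand with one $u$-labelled dot in its leftmost region. Injectivity is proved by closing $D_u$ into a single $\sigma$-labelled circle via an arc on the right: the dot then lies in the winding-number-$1$ exterior, and~\eqref{eq_def_alpha} gives $\alpha_U(\widehat{D_u})=u$, separating distinct values of $u$. Surjectivity is the heart of the argument and the main obstacle: given an arbitrary $D\in\Net_U(X_\sigma,X_\sigma)$, first reduce its underlying $G$-network to the identity strand using the $\mcC_G$ relations (Figures~\ref{fig3_011}--\ref{fig3_009},~\ref{fig_K3}, and the flip-point moves of Figures~\ref{fig3_015}--\ref{fig3_019}), transporting the floating dots through each move via the dot-sliding relation (Figure~\ref{fig3_013}, top), which twists a dot by $\mu^{\pm 1}$ each time it crosses a $\mu$-labelled edge. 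The dot-merging relation (Figure~\ref{fig3_012}, top right) then collects the resulting dots into a single element $u\in U$ in the outer region, yielding $D_u$. Independence of $u$ from the sequence of reductions, which is the main subtlety, is handled \emph{a posteriori} by invoking injectivity: any two reductions $D_u,D_{u'}$ of $D$ must give $\alpha_U(\widehat{D})=\alpha_U(\widehat{D_u})=u$ and similarly for $u'$, forcing $u=u'=\alpha_U(\widehat D)$, which is an invariant of $D$ by construction.
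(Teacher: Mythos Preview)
Your proof is correct and fills in details that the paper leaves implicit: the proposition is stated without a formal proof, and the surrounding text simply asserts that the relations of Figures~\ref{fig3_011}--\ref{fig3_019} hold in $\mcC_U$ and that placing dots in the leftmost region exhibits each nonempty hom set as a $U$-torsor. Your argument organizes precisely these ingredients---network reduction via the listed moves, dot transport via Figure~\ref{fig3_013}, and separation via a single closure---into the bijection $\phi\colon U\to\End_{\mcC_U}(X_\sigma)$, matching the paper's intended reasoning.
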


For two isomorphic objects $X,Y\in \Ob(\mcC_U)$ (we can also write $X=X_{\undsigma}, Y=X_{\undtau}$ with $\brak{\undsigma}=\brak{\undtau}$) the hom set $\Hom_{\mcC_U}(X,Y)$ is naturally a $U$-torsor, that is, (abelian) group $U$ acts on it with one orbit and trivial stabilizers.  

The group $U$ acts on the hom set by taking a diagram $D$ representing an element $[D]=\alpha_U(D)\in \Hom_{\mcC_U}(X,Y)$ and placing $x\in U$ in the leftmost region of $D$ to get a diagram ${}_x D$. As one runs over all $x\in U$, elements ${}_xD$ are in a bijection with elements of $\Hom_{\mcC_U}(X,Y)$. Picking any outside diagram $D'$ that closes inward onto the pair $(X,Y)$ of isomorphic objects, closing and evaluating $\alpha_U({}_xDD')\in U$ gives a bijection $U\to U$, $x\longmapsto \alpha_U({}_xDD')$. See Figure~\ref{fig_F7}. This bijection depends on the choice of $D$ and $D'$, but bijections differ by shift by elements of $U$.  

\vspace{0.07in} 

\input{fig_F7}

Likewise, for two diagrams $D_1,D_2$ representing elements in $\Hom_{\mcC_U}(X,Y)$, the difference 
\[ \alpha_U(D_1D') - \alpha_U( D_2 D')\in U 
\]
does not depend on the choice of outside diagram $D'$ if it closes around $D_1,D_2$ on the right (with no lines in $D'$ to the left of $D_1,D_2$) and can be denoted $[D_1-D_2]\in U$.  Furthermore, if $D_1,D_2$ differ only by placement of dots, the difference can be computed by moving all dots in $D_1,D_2$ to the leftmost region, via Figure~\ref{fig3_013} top row move, merging (adding) the dots once they are in that region, and taking the difference of the  labels of the two dots. 

Alternatively, one can choose a parametrization of $\Hom_{\mcC_U}(X,Y)$ by fixing a diagram $D$ and placing the dots $x\in U$ in the rightmost region. The two parametrizations differ by action of $\sigma$ on $U$, where $X_{\sigma}\cong X\cong Y$, since diagrams ${}_{\sigma(x)}D$ and $D_x$ are equivalent, that is, $\alpha_U({}_{\sigma(x)}DD')= \alpha_U(D_xD')$ for any closure diagram $D'$.   

\begin{remark}\label{remark_no_sums}  We do not consider formal sums of diagrams. For instance, formally taking $D+D$ and pairing with $D'$ results in evaluation $2 \alpha_U(DD')$, which is usually not of the form $\alpha_U(D_1D'')$ for any diagram $D''$, varying over all $D$. It is possible to enlarge the space of morphisms and consider a larger category where morphisms are $\Z$-linear combinations of diagrams modulo skein relation derived via the evaluation $\alpha$. 
\end{remark} 

Endomorphisms $\End_{\mcC_U}(X)\cong U$ as abelian monoids, for any $X\in \Ob(\mcC_U)$, by identifying $x\in U$ with the diagram given by placing $x$ in the leftmost region of the identity diagram $\id_X$. The latter diagram consists of vertical lines, one for each term in $X$.

 \begin{remark}
  There is an isomorphism of rigid monoidal categories $\mcC_{G,0}\cong \mcC_G$, where $0$ stands for the zero $\Z[G]$-module. 
  
  If $G$ acts trivially on $U$, dots move through diagram without twisting by elements of $G$, and there are natural isomorphisms of abelian groups $\Hom_{\mcC_U}(X_{\undsigma},X_{\undtau})\cong U$, not just of $U$-torsors, when $\brak{\undsigma}=\brak{\undtau}$, that is, when the objects are isomorphic.

 A homomorphism $\phi: U\lra U'$ of $\Z[G]$-modules induces a pivotal monoidal functor $\mathcal{F}_{\phi}:\mcC_U\lra \mcC_{U'}$ 
  which is the identity on objects and takes a diagram $D$ describing a hom in $\mcC_U$ to the diagram $\mathcal{F}_\phi(D)$ where labels $x\in U$ of floating dots are replaced by $\phi(x)\in U'$. 

  The homomorphism $0\lra U$ from the trivial $G$-module to $U$ induces a faithful monoidal functor $\mcC_G\lra \mcC_U$ for any $G$-module $U$.
  \end{remark}

  \begin{remark} 
    Category $\mcC_U$ is equivalent to the category in~\cite[Example 2.3.7]{EGNO15}. Note that the category in that example has objects corresponding to elements of $G$, while for us objects are sequences of elements of $G$ with signs. 
  \end{remark}


\subsection{Twisting by a one-cocycle}\label{subsec_one_cocycle}

$\quad$

Given a group $G$ and a left $G$-module $U$, a one-cocycle $f\in \mathsf{Z}^1(G,U)$ is a function $f:G\lra U$ subject to the condition
\begin{equation}
    f(\sigma\tau)=f(\sigma)+\sigma  f(\tau), \ \ \sigma,\tau \in G. 
\end{equation}
In particular $f(1) = f(1\cdot 1)=f(1)+1 f(1)=2f(1)$, so that $f(1)=0$ and $0=f(1)=f(\sigma\sigma^{-1})=f(\sigma)+\sigma f(\sigma^{-1})$. Thus,  
\begin{equation} \label{eq_coc_prop}  
 f(1) = 0, \ \  f(\sigma)= - \sigma f(\sigma^{-1}).
\end{equation}
A one-cocycle $f$ is \emph{null-homologous} (we write $f \in \mathsf{B}^1(G,U)$) if 
\[
f (\sigma) = \sigma u - u
\]
for some $u\in U$. The first cohomology group of $G$ with coefficients in $U$ is the quotient 
\begin{equation}\label{eq_first_homol} 
 \mathsf{H}^1(G,U) \ := \ \mathsf{Z}^1(G,U)/\mathsf{B}^1(G,U).
\end{equation}

We fix a one-cocycle $f:G\lra U$ and construct a category $\mcC_{f}=\mcC_{G,U,f}$ as follows. First, consider diagrams in $\Net^I_U$ of $G$-flows of type I with $U$-labelled dots floating in the region. Any such diagram has evaluation $\alpha_U$ given by summing twisted labels $\omega(\mcN,p)(x_p)\in U$ over the dots $p$ in $\wmcN$. We shift the evaluation via cocycle $f$. Position the network $\mcN$ so that all vertices are as in Figure~\ref{fig2_001} and the projection onto the $y$-axis is generic. Each local maximum and minimum point for this projection carries co-orientation of the line at the point (either upward or downward) and the label $\sigma$ of the line. For a local extremum point $m$ pick a point $p=p(m)$ in $\R^2\setminus \mcN$ slightly above $m$. This point has index $\omega(p,\mcN)$, the winding number of $\mcN$ around $p$. 

For an upward-cooriented local maximum or minimum point $m$ define its local contribution 
\begin{equation}\alpha_f(m):=\pm \omega(p(m),\mcN)\cdot f(\sigma)  \in U,
\end{equation}
where $+$ is chosen for maximum points and $-$ for minimum points. The contribution is given by applying the winding number $\omega(p,\mcN)\in G$ to $f(\sigma)\in U$. 

\input{fig6_001}

\begin{figure}
    \centering
\begin{tikzpicture}[scale=0.6,decoration={
    markings,
    mark=at position 0.50 with {\arrow{>}}}]
\begin{scope}[shift={(0,0)}]

\draw[thick] (0.5,1.5) .. controls (0.6,3) and (3.4,3) .. (3.5,1.5);
\draw[thick] (1.2,2.8) -- (1.3,2.5);
\node at (0.5,2.55) {$\gamma$};

\draw[thick] (1,0) .. controls (1.1,1) and (2.9,1) .. (3,0);
\draw[thick] (1.25,0.9) -- (1.4,0.6);
\node at (0.5,0) {$\sigma$};

\node at (2,1.5) {$\times$};
\node at (2.45,1.4) {$p$};

\node at (5.25,3.2) {$\omega(p,\mcN)$};

\draw[thick,gray] (2,1.5) decorate [decoration={snake,amplitude=0.25mm}] { .. controls (2.1,2.5) and (3.4,3.) .. (3.5,3.5)};

\end{scope}
\end{tikzpicture}
    \caption{Contribution of a local maximum along an upward-oriented strand to the evaluation in Figure~\ref{fig6_001} table is given by $\omega(p)f(\sigma)\in U$, where $p$ is a point slightly above the maximum. The $G$-winding number $\omega(p)=\omega(p,N)$ is given by moving $p$ to infinity, along the wavy line and taking the product $\gamma^{\pm 1}$ over the intersections of the line with arcs labelled $\gamma\in G$.  }
    \label{fig6_001a}
\end{figure}
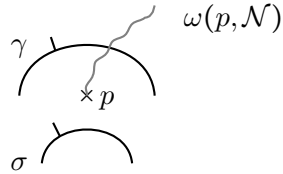

Downward-cooriented local extrema contribute $0$, so for them $\alpha_f(m)=0$. It is convenient to add vertices to the list of local diagrams and have them contribute $0$ as well. The contributions are summarized in the table in Figure~\ref{fig6_001}. Define the evaluation 
\begin{equation}
    \alpha_f(\wmcN) \ := \sum_{m} \alpha_f(m) +\alpha_U(\wmcN). 
\end{equation}
Thus, $\alpha_f$ is given by summing local contributions over all upward-cooriented local extrema of $\mcN$ and contributions from dots of $\wmcN$, see \eqref{eq_def_alpha}. 

\begin{prop}
    Evaluation $\alpha_f(\wmcN)$ is an isotopy invariant of $\wmcN$.
\end{prop}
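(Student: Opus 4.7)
The plan is to verify invariance of $\alpha_f(\wmcN)$ under generic ambient isotopies of $\wmcN$ in $\R^2$ by decomposing each isotopy into elementary catastrophes and checking each individually. Between catastrophes the Morse-theoretic data (the set of local extrema, their co-orientations, the regions of $\R^2\setminus\mcN$, and the $G$-winding numbers of these regions) is locally constant, so $\alpha_f(\wmcN)$ is locally constant; invariance reduces to verifying zero net change across each codimension-one event.

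A generic one-parameter family of embeddings exhibits the following codimension-one events: (i) a Morse birth or death of a max--min pair on a smooth arc of $\mcN$; (ii) two critical points of the height function swapping relative heights; (iii) a tangent-at-vertex event, where the tangent direction of an incident edge at some vertex $V$ rotates through horizontal; and (iv) a floating dot moving inside its region. Events (ii) and (iv) preserve $\alpha_f$ manifestly, since $\sum_m \alpha_f(m)$ is an unordered sum and the winding number $\omega(p,\mcN)$ depends only on the region containing $p$.

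The principal case (i) is handled as follows. In the local Morse model $y(t)=t^3-at$ (with $x(t)$ smooth and $x'(t)\neq 0$), the two newly born extrema $M$ and $m$ have tangent vectors both equal to $(x'(t_*),0)$ at $t_*=\pm\sqrt{a/3}$, so the right-hand normals and hence the co-orientations at $M$ and $m$ agree. If both co-orientations are downward, the contributions are $0$. If both are upward on the arc labelled $\sigma$, they equal $+\omega(p_M,\mcN)f(\sigma)$ and $-\omega(p_m,\mcN)f(\sigma)$. Because the birth is confined to a small disk $D$ disjoint from the rest of $\mcN$ and the deformed arc remains embedded, the topology of $\R^2\setminus\mcN$ is unchanged. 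Choosing $p_M, p_m$ just above the respective extrema places both in the same component of $\R^2\setminus\mcN$, namely the one extending outward from the newly formed wiggle; hence $\omega(p_M,\mcN)=\omega(p_m,\mcN)$ and the contributions cancel.

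Event (iii) is the main technical obstacle. The rotating edge gains or loses an interior extremum infinitesimally close to $V$, producing an apparent change of $\pm\omega(p_V,\mcN)f(\sigma)$ in $\alpha_f$. Compensation comes from the one-cocycle identity $f(\sigma\tau) = f(\sigma) + \sigma f(\tau)$ applied at $V$: tracking the co-orientations of the three edges and the windings of the three regions adjacent to $V$, the change rewrites as a sum of terms of the form $\omega_i f(\sigma)$, $\omega_i\sigma f(\tau)$, and $\omega_i f(\sigma\tau)$ (where $\omega_i$ are winding numbers of the three incident regions related by multiplication by $\sigma$ or $\tau$), and these cancel by the cocycle relation \eqref{eq_coc_prop}. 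The careful bookkeeping of which region receives which winding number and on which side of the vertex each edge's co-orientation points is the delicate part, and this is where the cocycle hypothesis on $f$ is essential.
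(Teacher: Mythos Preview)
Your overall strategy---reducing to codimension-one catastrophes and checking each---matches the paper's approach, and your handling of (i), (ii), (iv) is correct and essentially identical to the paper's line-isotopy check. The gap is in your treatment of event (iii).

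You write that the rotating edge gains or loses one extremum, producing a change $\pm\omega(p_V,\mcN)f(\sigma)$, and then assert that the cocycle identity provides ``compensation.'' But vertices contribute $0$ to $\alpha_f$ both before and after, so your account gives no source for the terms $\omega_i f(\sigma\tau)$ and $\omega_i f(\sigma)$ you list---you have simply written down the three ingredients of the cocycle relation without explaining where two of them arise in the isotopy. What actually happens (and what the paper verifies concretely, Figures~\ref{fig_C4}--\ref{fig_F9}) is that the vertex slides past an \emph{existing} extremum on the ambient arc; two things then change simultaneously: a new extremum is born on the rotating edge (say labelled $\tau$), and the pre-existing extremum on the arc is relabelled from $\sigma\tau$ to $\sigma$ because the vertex has crossed it. The net change is
\[
\omega(p_1)\bigl(f(\sigma)-f(\sigma\tau)\bigr)+\omega(p_2)f(\tau),\qquad \omega(p_2)=\omega(p_1)\sigma,
\]
which vanishes exactly by $f(\sigma\tau)=f(\sigma)+\sigma f(\tau)$. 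Your text never identifies this relabelling of the existing extremum, so as written the argument does not close. Two smaller points: the reference to \eqref{eq_coc_prop} is to the auxiliary identity $f(\sigma)=-\sigma f(\sigma^{-1})$, not the cocycle relation you actually need; and you should record separately the downward-co-oriented case of (iii), where all relevant extrema contribute $0$ and there is nothing to check.
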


\begin{proof} One checks that for a set of generating isotopies $\{D_1\sim D_2\}$ the contributions of $D_1, D_2$ to $\alpha_f$ in each isotopy are equal. For instance, for line isotopies in Figure~\ref{fig_C3}, diagrams $D_1,D_2$ both contribute $0$ in cases $1,4$. In case 2, the local maximum and minimum have associated dots $p_1,p_2$ above them in the same region, so that $\omega(p_1,\mcN)=\omega(p_2,\mcN)$ and the contributions from the two extrema add up to $0$ due to the minus sign, see the first two entries of Figure~\ref{fig6_001} table. The same computation takes care of case 3. 

\input{fig_C3}

\input{fig_C4}

\vspace{0.07in} 

\input{fig_F9}

For the diagrams in Figure~\ref{fig_C4}, $D_1$ has a single maximum contributing $\omega(p_1,\mcN)\cdot f(\sigma\tau)$, see Figure~\ref{fig_F9}. Diagram $D_2$ has two local maxima, with the corresponding points $p_1,p_2$ contributing 
\[
\omega(p_1, \mcN)\cdot f(\sigma) + \omega(p_2, \mcN)\cdot f(\tau), 
\]
and $\omega(p_3,\mcN)=  \omega (p_1,\mcN)\sigma$. Consequently, the contribution of $D_2$ equals 
\[
\omega(p_1,\mcN) \cdot f(\sigma) +  \omega(p_1,\mcN)\,\sigma\cdot f(\tau) = \omega(p_1,\mcN)\left( f(\sigma) + \sigma f(\tau)\right)=
\omega(p_1,\mcN)\cdot f(\sigma\tau), 
\]
where the last equality is exactly the one-cocycle condition on $f$. We see that $D_1,D_2$ contribute the same amount to $\alpha_f$. 

For Figure~\ref{fig_C4} with the opposite co-orientations of strands, both $D_1$ and $D_2$ contribute $0$ to $\alpha_f$, since in that case co-orientations point downward, see entries 3 and 4 in Figure~\ref{fig6_001}. 
Similar considerations take care of moving a vertex through a local minimum with either co-orientation. These computations also imply that the three diagrams in Figure~\ref{fig_C5} each contribute $0$ to $\alpha_f$, allowing the corresponding isotopies.  

\input{fig_C5}

\end{proof}

We can now define category $\mcC_f$ via the universal construction for evaluation 
\[
\alpha_f \ : \ \Net^I_U \lra U.
\]
For now, we are using $G$-flow networks of type $I$, without flip points or more general vertices. 
This category has the same objects $X_{\undsigma}$ as categories $\mcC_G$ and $\mcC_U$, over all finite sequences $\undsigma$, see earlier. It shares many features with $\mcC_U$, c.f. Proposition~\ref{prop_cat_U}.  

\begin{prop} \label{prop_cat_f} Given a one-cocycle $f:G\lra U$, 
    category $\mcC_f$ is a pivotal monoidal category. Any object of $\mcC_f$ is isomorphic to $X_{\sigma}$, for a unique $\sigma\in G$. The hom spaces between these objects are   
    \begin{equation}
    \Hom_{\mcC_U}(X_\sigma,X_{\tau}) =
    \begin{cases} 
  U  &  \text{if} \ \ \sigma=\tau, \\
\emptyset &  \text{otherwise}.
\end{cases}
\end{equation}
\end{prop}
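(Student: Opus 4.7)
The proof proceeds by close analogy with Proposition~\ref{prop_cat_U} for $\mcC_U$; the main task is to check that the one-cocycle twist of the evaluation does not alter the categorical skeleton. First, the isomorphism $X_{\undsigma} \cong X_{\brak{\undsigma}}$ is realized by the same merge, split, and (where needed) flip-point diagrams as in $\mcC_U$. These structural diagrams, together with their compositions that realize the inverse isomorphism, can be drawn so that all upward-cooriented local extrema cancel in pairs; consequently $\alpha_f$ agrees with $\alpha_U$ on them, and they remain isomorphisms in $\mcC_f$. Combined with the conservation of the $G$-flow across horizontal cross-sections at every trivalent vertex, this yields $\Hom_{\mcC_f}(X_\sigma, X_\tau) = \emptyset$ whenever $\sigma \neq \tau$.

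The central step is to identify $\Hom_{\mcC_f}(X_\sigma, X_\sigma) \cong U$. I would define $\phi\colon U \to \Hom_{\mcC_f}(X_\sigma, X_\sigma)$ by $\phi(u) = [{}_u \id_{X_\sigma}]$, placing $u \in U$ in the leftmost region of the identity diagram. Injectivity follows by closing on the right with a simple arc $D'$: the dot then sits in the outer region of winding number $1$, giving $\alpha_f(\phi(u) D') - \alpha_f(\id D') = u$, which detects $u$. For surjectivity, given any $D\colon X_\sigma \to X_\sigma$, set $v := \alpha_f(DD') - \alpha_f(\id D')$ for some fixed closure $D'$; the claim is $[D]_f = \phi(v)$, equivalently that $v$ is independent of the choice of $D'$.

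This last independence is the key lemma, and I expect it to be the main obstacle. It follows from a factorization $\alpha_f(DE) = \alpha^\ell_f(D) + \alpha^r_f(E)$ valid for any closure $E$, where the two summands are defined using leftward and rightward rays respectively for the $G$-winding number computations. A dot or extremum inside $D$ has its winding number in the closed diagram computable by a leftward ray exiting into the outer region without meeting $E$, and symmetrically for features in $E$; hence the contributions to $\alpha_f$ of $D$ and $E$ decouple. Care is needed to treat extrema sitting near the junction between $D$ and $E$, but by a small isotopy we may arrange the closure so that no new extrema arise at the interface.

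Finally, the monoidal and pivotal structures transfer directly from $\mcC_U$: tensor product by horizontal juxtaposition, unit isomorphism $\one \cong X_1$ via the lollipop of Figure~\ref{fig_F3}, and duals via the cups and caps of Figure~\ref{fig_F4_1}. All coherence identities reduce to equalities of $\alpha_f$-evaluations, which hold by the isotopy invariance of $\alpha_f$ established just prior to the proposition.
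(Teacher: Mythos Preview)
Your proposal is correct and follows essentially the same approach as the paper, which does not give an independent proof of Proposition~\ref{prop_cat_f} but simply asserts that ``Discussion following Proposition~\ref{prop_cat_U} up until Remark~\ref{remark_no_sums} applies to $\mcC_f$ as well without any changes.'' Your factorization $\alpha_f(DE) = \alpha^\ell_f(D) + \alpha^r_f(E)$ makes explicit the closure-independence that the paper leaves implicit, and your treatment of the structural isomorphisms via cancellation of upward-cooriented extrema is a valid elaboration of what the paper takes for granted via the relations already established in the preceding proposition. One minor remark: at this point in the paper $\mcC_f$ is built from type~I networks, so strictly speaking the isomorphism $X_\sigma^+ \cong X_{\sigma^{-1}}^-$ is realized by lollipop diagrams (Figure~\ref{fig3_023}) rather than flip points, which are introduced only afterward; this does not affect your argument.
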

For a general sequence $\undsigma$ there is an isomorphism $X_{\undsigma}\cong X_{\brak{\undsigma}}$. 
Homs between two objects $\Hom_{\mcC_f}(X_{\undsigma},X_{\undtau})$ either constitute a $U$-torsor, if $\brak{\undsigma}=\brak{\undtau}$, or the empty set, if $\brak{\undsigma}\not=\brak{\undtau}$. 
Discussion following Proposition~\ref{prop_cat_U} up until Remark~\ref{remark_no_sums} applies to $\mcC_f$ as well without any changes. 

\begin{prop}
    The following relations hold in $\mcC_f$:
    \begin{enumerate}
        \item Isotopy relations on diagrams. 
        \item Associativity of merge and split relations in Figures~\ref{fig3_011} and \ref{fig3_010} and the merge-split invertibility relations in Figure~\ref{fig3_009}.
        \item Dot relations shown in Figure~\ref{fig_F10a} (the same relations hold in $\mcC_U$).  
        \item Circle evaluation relations in Figure~\ref{fig_F10}. Innermost outward-cooriented circle evaluates to a dot labelled $f(\sigma)$. Innermost inward-cooriented circle evaluates to the  dot $f(\sigma^{-1})$. 
    \end{enumerate}
\end{prop}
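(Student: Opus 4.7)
The plan is to verify each relation by invoking the universal construction directly: two diagrams $D_1,D_2$ with common boundary define equal morphisms in $\mcC_f$ iff $\alpha_f(D_1 \cup D') = \alpha_f(D_2 \cup D')$ for every outside closure $D'$. Since $D'$ can only couple to the ambient region surrounding the local change, each relation reduces to comparing, within a bounded disc, (i) contributions of floating dots, which depend only on the winding numbers of the regions they inhabit, and (ii) signed contributions of local $y$-extrema as tabulated in Figure~\ref{fig6_001}. Part~(1), isotopy invariance, is exactly the content of the proposition just proved, so nothing remains there.

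For parts~(2) and (3), draw each local picture in Figures~\ref{fig3_011}, \ref{fig3_010}, \ref{fig3_009} and in the dot-relation figure so that all strands are monotone in $y$ and every critical point is a type-I trivalent vertex. By Figure~\ref{fig6_001}, vertices and downward-cooriented extrema contribute $0$, and no upward-cooriented non-vertex extrema are introduced on either side. Thus the extremum sum vanishes on both sides, and the equality of evaluations reduces to the corresponding equality for $\alpha_U$, which was already noted in Section~\ref{subsec_monoidal_U}. The dot relations of Figure~\ref{fig3_012} (merging $x,y$ in a region of winding $g$ uses $gx+gy = g(x+y)$) and Figure~\ref{fig3_013} (a dot $x$ crossing an edge labelled $\sigma$ in the direction of co-orientation has its winding number right-multiplied by $\sigma$, so $x$ becomes $\sigma(x)$) go through verbatim in $\mcC_f$.

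For part~(4), consider an outward-cooriented circle labelled $\sigma$ with empty interior, embedded in a region of $G$-winding number $g$ (relative to any closure). In the conventional drawing the circle has two non-vertex $y$-extrema. At the top, the co-orientation is upward and the point $p$ placed slightly above lies in the outer region with winding $g$, contributing $+g\cdot f(\sigma)$. At the bottom, the co-orientation is downward, contributing $0$. The total $g\cdot f(\sigma)$ matches the evaluation of a floating dot labelled $f(\sigma)$ in the ambient region (with winding $g$), establishing the first claim. For the inward-cooriented $\sigma$-circle the roles reverse: the top contributes $0$ (downward co-orientation) and the bottom contributes $-\omega(p)\cdot f(\sigma)$, where $p$ slightly above the minimum lies inside the circle. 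Crossing inward is along the co-orientation, so the inside region has winding $g\sigma^{-1}$, and the bottom contribution is $-g\sigma^{-1}\cdot f(\sigma)$. The one-cocycle identity~\eqref{eq_coc_prop} gives $\sigma f(\sigma^{-1}) = -f(\sigma)$, hence $-\sigma^{-1}f(\sigma) = f(\sigma^{-1})$, so the total is $g\cdot f(\sigma^{-1})$, matching a floating dot labelled $f(\sigma^{-1})$.

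The main obstacle is the circle evaluation: one must pair correctly the sign coming from the extremum type (max vs.\ min), the co-orientation at that extremum (which depends on whether the circle points outward or inward), and the winding number of the region in which the reference point $p$ lies (outer region of winding $g$ vs.\ inner region of winding $g\sigma^{\pm 1}$). The cocycle identity $f(1)=0$, rewritten as $\sigma f(\sigma^{-1}) = -f(\sigma)$, is precisely what is needed to reconcile the inward case with its stated value $f(\sigma^{-1})$; everything else is bookkeeping, reducing to bijections of regions and the vanishing of type-I vertex and downward-cooriented extremum contributions.
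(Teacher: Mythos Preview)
Your proof is correct and follows the same approach the paper implicitly uses: the paper does not give a formal proof block for this proposition, treating it as a direct consequence of the evaluation table in Figure~\ref{fig6_001} and the already-established isotopy invariance, with the circle computations sketched in the caption to Figure~\ref{fig_F10}. Your write-up simply makes these verifications explicit, and your treatment of the inward-cooriented circle (tracking that the reference point $p$ above the minimum lies in the interior region of winding $g\sigma^{-1}$, then invoking $-\sigma^{-1}f(\sigma)=f(\sigma^{-1})$) is exactly the computation the paper indicates in that caption.
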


\input{fig_F10a}

\vspace{0.07in}

\input{fig_F10}

Note that the relation in Figure~\ref{fig_K3} does not hold for this evaluation.

\vspace{0.07in}

The 1-cocycle relation can now be interpreted via an evaluation of nested circles in Figure~\ref{fig_F11}. 

\input{fig_F11}

\input{fig_K4} 

\begin{remark}
A closed dotless diagram evaluates to an element of $U$ which is usually not zero. For instance, the  $\sigma$-circle diagram with outward co-orientation evaluates to $f(\sigma)\in U$, see Figure~\ref{fig_F10},  and a net of three circles in Figure~\ref{fig_K4} to $f(\gamma\sigma)+\gamma f(\tau)$. To get the most freedom for that evaluation we can form the module $U$ on generators $[\sigma]$, $\sigma\in G$, with defining relations given by the one-cocycle equations $[\sigma\tau]=[\sigma]+\sigma[\tau]$, $\sigma,\tau\in G$. 
\end{remark}

We now extend the evaluation $\alpha_f$ and the diagrammatics to type II diagrams (with flip points and vertices of type II). Flip points in Figure~\ref{fig_F12} on the left (two possibilities, depending on co-orientation) are positioned vertically, and to define them as morphisms in $\mcC_f$ we bend the upper arc down to the side where the co-orientation of the lower arc points and add a lollipop, see Figure~\ref{fig_F12}.  

\input{fig_F12}

\vspace{0.07in} 

The evaluation $\alpha_f$ of a flip point is computed from this reduction of a flip point to a type I diagram. Notice that the top arc bends either to the left or to the right in Figure~\ref{fig_F12}, depending on co-orientations. The contribution to $\alpha_f$ of flip points is shown in the first two columns in Figure~\ref{fig_F13}. 

\input{fig_F13}

\vspace{0.07in} 

This normalization of flip points is chosen to have the cancellation property for two flip points on a vertical line, see Figure~\ref{fig3_015} on the right and Figure~\ref{fig_C6}. This is the normalization used in the paper. 
However, sliding flip points through local maxima and minima requires additional terms as shown in Figure~\ref{fig_F14}.  

\input{fig_C6}

\vspace{0.07in} 

\input{fig_F14}

Next, consider type II vertices, where co-orientations are compatible, all pointing clockwise or anticlockwise around the vertex, see Figure~\ref{fig_C2}, top row. We use the convention that type II vertices must be positioned so that two or one edge at the vertex point up and the other one of two edges point down. In this way the three edges at a vertex are split into two groups: two edges and one edge.  It is then natural to reduce type II vertices to type I vertices by adding a flip point to the edge which is by itself. Figure~\ref{fig_F15} shows examples of this convention. 

\vspace{0.07in} 

\input{fig_F15}

One can take a leg (or ray) at a vertex that is among the two that point together and isotop it to be next to the other one. This move has evaluation invariance only in half of the possible cases and otherwise requires an extra term, see Figure~\ref{fig_F16}. 

\input{fig_F16}

\begin{prop}
    For any one-cocycle $f\in \mathsf{Z}^1(G,U)$, categories $\mcC_f$ and $\mcC_U$ are isomorphic as monoidal categories, and an isomorphism can be made the identity on objects.  
\end{prop}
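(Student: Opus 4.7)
The plan is to construct an explicit monoidal functor $F\colon \mcC_U\lra \mcC_f$ that is the identity on objects, and show it is an isomorphism on hom sets. Both categories share the same objects $X_{\undsigma}$ and, by the discussion following Proposition~\ref{prop_cat_f}, their hom sets have identical empty/non-empty patterns: empty when $\brak{\undsigma}\neq\brak{\undtau}$, and a $U$-torsor otherwise. Since an isomorphism of $U$-torsors is just a translation by an element of $U$, the task reduces to choosing for each such pair a coherent translation compatible with composition and tensor product.

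To construct $F$, I would define it on diagrams. Given a diagram $D$ in generic position with respect to the height function, produce $F(D)$ by inserting compensating floating dots: at each upward-cooriented local maximum $m$ with edge label $\sigma$, add a dot labelled $-f(\sigma)$ in the region immediately above $m$ (the region containing the reference point $p(m)$ of Figure~\ref{fig6_001}); at each upward-cooriented local minimum with label $\sigma$, add a dot labelled $+f(\sigma)$ in the analogous region. Downward-cooriented extrema (and trivalent vertices of type I) receive no new dots, matching the zero entries in the tables of Figures~\ref{fig6_001} and~\ref{fig_F13}. Since floating dots do not change any $G$-winding numbers, for any closure diagram $E$ the compensating-dot contributions to $\alpha_U(F(D)\,E)$ cancel exactly the extremum contributions to $\alpha_f(F(D)\,E)$ coming from extrema of $D$; a direct accounting then yields
\[
\alpha_f(F(D)\,E)\ =\ \alpha_U(D\,E)\ +\ \sum_{m\in E}\alpha_f(m),
\]
where the correction sums over extrema of the closure $E$ and depends only on $E$, not on $D$.

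From this identity, well-definedness on hom sets is immediate: $D_1\sim_U D_2$ under the universal construction iff $\alpha_U(D_1 E)=\alpha_U(D_2 E)$ for every closure $E$, iff $\alpha_f(F(D_1)\,E)=\alpha_f(F(D_2)\,E)$ for every $E$, iff $F(D_1)\sim_f F(D_2)$. The inverse of $F$ is obtained by inserting dots $+f(\sigma)$ and $-f(\sigma)$ (swapped signs), giving a bijection on hom sets. Monoidality follows from the observation that vertical stacking $D_1D_2$ and horizontal juxtaposition $D_1\otimes D_2$ create no new local extrema with respect to the height function, so $F$ commutes with both operations; the identity $\id_{X_{\undsigma}}$, given by vertical strands with no extrema, is preserved.

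The main obstacle is verifying that the cancellation identity displayed above remains valid in the presence of flip points and type II vertices, which also carry nonzero $\alpha_f$-contributions (see Figure~\ref{fig_F13}). The prescription for $F$ must be extended by placing an appropriate compensating dot at each flip point and type II vertex; the fact that the resulting compensated diagram evaluates correctly under $\alpha_f$ amounts to repeated use of the identities $f(1)=0$ and $f(\sigma)=-\sigma f(\sigma^{-1})$ from \eqref{eq_coc_prop}, which is exactly the algebraic content encoded in the reduction conventions of Figures~\ref{fig_F12} and~\ref{fig_F15}. Once this bookkeeping is carried out, invariance of $F$ under the moves of Figures~\ref{fig_F14} and~\ref{fig_F16} follows and the isomorphism $\mcC_U\cong \mcC_f$ is complete.
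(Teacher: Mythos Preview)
Your proof is correct and essentially the same as the paper's: the paper defines the isomorphism $\mathcal{F}_f\colon \mcC_f \to \mcC_U$ on generating morphisms by inserting a compensating dot $f(\sigma)$ (resp.\ $-f(\sigma)$) at each upward-cooriented cap (resp.\ cup), which is exactly your prescription with the direction reversed. Your final paragraph's concern about flip points and type II vertices is moot, since both are \emph{defined} as compositions of type I generators (Figures~\ref{fig_F12} and~\ref{fig_F15}), so once the functor is specified on type I generators it extends to them automatically---no separate bookkeeping is needed.
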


\begin{proof} Define the functor $\mathcal{F}_f:\mcC_f\lra \mcC_U$ to be the identity on objects and given as shown in  Figure~\ref{fig_F17} on morphisms. Also, the endomorphism $m_x\in \End(\one)$ of multiplication by $x\in U$ is sent to itself by $\mathcal{F}_f$.   

\input{fig_F17}

\vspace{0.07in} 

It is straightforward to see that $\mathcal{F}_f$ is an isomorphism of monoidal categories. 
\end{proof} 

Note that we are using cups and caps to capture 1-cocycle contributions. 
The significance of the twist by a cocycle $f$ is that it \emph{changes the pivotal structure of the category}, and  $\mcC_f$ and $\mcC_U$ are not isomorphic as pivotal monoidal categories, in general. The pivotal structure of the category is essential for incorporating this 1-cocycle twist to the evaluation.

\begin{remark}
    A cocycle $f$ is null-homologous if  $f(\sigma)=\sigma u -u $ for some $u\in U$. For a null-homologous cocycle maps in Figure~\ref{fig_F17} can be rewritten to not depend on the label $\sigma$ of an arc, as shown in Figure~\ref{fig_F18}. More generally, if $f_1-f_2$ is null-homologous, for cocycles $f_1,f_2$, the isomorphism between categories $\mcC_{f_1}$ and $\mcC_{f_2}$ can be defined on generators to not depend on the labels of the arcs. 

\input{fig_F18}

\end{remark}

In this section 1-cocycles of $G$ took values in an abelian group $U$. We briefly discuss diagrammatics for 1-cocycles valued in arbitrary groups in Section~\ref{subsec_cross_mod}, which requires replacing floating $U$-labelled dots by $U$-labelled planar networks interacting with those for $G$. 


\subsection{Diagrammatics for a two-cocycle}\label{subsec_two_cocycle}

$\quad$

{\bf Two-cocycles.}
Let $U$ be a $\Z[G]$-module, as before, and 
assume given a 2-cocycle $c:G\times G\lra U$, so that 
\begin{equation}\label{eq_cocondition_2}
\sigma(c(\tau,\gamma)) = c(\sigma,\tau)+ c(\sigma\tau,\gamma) -   c(\sigma,\tau\gamma), \ \ \sigma,\tau,\gamma\in G.  
\end{equation}
We can further assume the cocycle to be normalized, so that 
\begin{equation}\label{eq_unit_2}
c(\sigma,1)=c(1,\sigma)=0, \ \ 
\sigma\in G.
\end{equation} 
Denote the abelian group of normalized cocycles by 
$\mathsf{Z}^2_0(G,U)$.

Specializing \eqref{eq_cocondition_2} to $\tau=\sigma^{-1}, \gamma=\sigma$ results in the relation 
\begin{equation}\label{eq_co_symm} 
c(\sigma,\sigma^{-1}) = \sigma(c(\sigma^{-1}, \sigma)), \ \ \sigma\in G,
\end{equation}
where cocycle $c$ is normalized. 

A 2-cocycle $c$ is called \emph{null-homologous} if 
\[
c(\sigma,\tau) = b(\sigma)+\sigma(b(\tau))- b(\sigma
\tau) 
\]
for some function $b:G\lra U$. We say that a function $b$ is \emph{normalized} if $b(1)=0$. 
Denote by $\mathsf{Z}^2_0(G,U)$ the set of 2-cocycles associated to normalized functions $b:G\lra U$. These are normalized null-homologous cocycles.  

Subtracting from $c$ the null-homologous 2-cocycle for the constant function $b(\sigma)=c(1,1)$ results in a normalized 2-cocycle. We assume from now on that our 2-cocycles are normalized. 

The second cohomology group of $G$ with coefficients in $U$ can be defined as the quotient 
\begin{equation}
    \mathsf{H}^2(G,U) \ =\  \mathsf{Z}^2_0(G,U)/\mathsf{B}^2_0(G,U).
\end{equation}

Consider an extension $T$ of a group $G$ by an abelian group $U$, which can be written as 
\begin{equation}\label{eq_ext1}
1 \lra U \stackrel{\iota}{\lra} T \lra G \lra 1. 
\end{equation} 
Pick a section $s:G\lra T$, a map of sets, allowing to write $T\cong U \times G$ (isomorphism of sets) representing $t\in T$ as $u\, s(\sigma)$, parameterized by $(u,\sigma)$, for unique $u\in U, \sigma\in G$. Assume that $s(1)=1$ (normalization). Note that we write group operation in $U$ additively and multiplicatively in $T$ and $G$, so that $\iota(u_1+u_2)=\iota(u_1)\iota(u_2)$ and $\iota(0)=1$.  Then 
\[
s(\sigma)s(\tau)= c(\sigma,\tau)s(\sigma\tau),
\]
for a unique function $c: G\times G\lra U$, and $c(\sigma,\tau)$ satisfies the additive cocycle condition \eqref{eq_cocondition_2} and normalization \eqref{eq_unit_2}. This central extension has multiplication 
\[ 
(u_1,\sigma_1)(u_2,\sigma_2)=(u_1+\sigma_1(u_2)+c(\sigma_1,\sigma_2),\sigma_1 \sigma_2).
\]
Denote by $T_c$ the central extension associated to the cocycle $c$, with the short exact sequence  
\begin{equation}\label{eq_ext2}
1 \lra U \stackrel{\iota}{\lra} T_c \lra G \lra 1. 
\end{equation} 
Changing $\sigma$ by  a coboundary is equivalent to rescaling the section $s$ by a function 
\begin{equation}\label{eq_func_b}
   b:G\lra U, \ \ b(1)=0, 
\end{equation} 
so that $s'(\sigma)=b(\sigma)s(\sigma)$ and replacing $c(\sigma,\tau)$ by 
\begin{equation}\label{eq_coc_change} 
c'(\sigma,\tau) = b(\sigma)+\sigma(b(\tau))- b(\sigma\tau) + c(\sigma,\tau). 
\end{equation} 
This gives an isomorphism $T_{c'}\cong T_c$ that respects the exact sequence \eqref{eq_ext2} and its counterpart for $T_c$. 

We refer to Brown~\cite{Brown94} for more information on group cocycles and cohomology. The group $\mathsf{H}^2(G,U)$ can be interpreted as the second cohomology group of the Eilenberg--MacLane space $K(G,1)$ with coefficients in the local system given by $U$.

\input{fig_F22}

\vspace{0.07in} 

{\bf Evaluation $\alpha_c$.}
To a normalized 2-cocycle $c$ we associate an evaluation $\alpha_c$ of diagrams as shown in Figure~\ref{fig_F22}. It extends to type II vertices via their definition in  Figure~\ref{fig_F15}. 

\input{fig2_003}

\begin{prop}
\label{prop:invariance_rot_vert}
    This evaluation function is invariant under isotopies of diagrams. 
\end{prop}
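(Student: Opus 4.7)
The plan is to reduce isotopy invariance to a short list of generating local moves and then verify each move by comparing the two sides of Figure~\ref{fig_F22} as modified by the relevant portions of the diagrams. Since $\alpha_c$ localizes at type~I trivalent vertices, flip points, and (via reduction) type~II vertices, any isotopy that keeps the set of vertices and flip points fixed and only deforms edges contributes the same on both sides: only the $G$-winding numbers $\omega(p)$ of the reference points $p$ can potentially change, and whenever the isotopy preserves the region in which each $p$ sits, $\omega(p)$ does not change. So the first step is to restrict attention to the moves that either create/destroy no vertices and for which the reference region of some vertex changes, or change the combinatorics of the diagram at a vertex.

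Next I would list the generating local moves: (i) planar isotopies of an edge past a vertex, through a local maximum or minimum, as in Figure~\ref{fig_C4} and Figure~\ref{fig_C5}; (ii) rotations of a trivalent vertex such as the three pictures in Figure~\ref{fig2_003}; (iii) sliding of a flip point through a vertex or through a local extremum, c.f.\ Figure~\ref{fig3_017} and Figure~\ref{fig_F14}; (iv) creation/annihilation of a pair of flip points on a vertical line, Figure~\ref{fig3_015}. For (i), the two sides each carry one vertex, but the reference point $p$ for the vertex sits in different regions; writing out $\omega(p_1)$ and $\omega(p_2)$ one finds $\omega(p_1)=\omega(p_2)\sigma^{\pm 1}$ (as in Figure~\ref{fig_F9}), so the difference between the two contributions vanishes because the same $c(\cdot,\cdot)$ term gets multiplied by the identical group element.

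The genuinely nontrivial case is (ii), rotation of a vertex as in Figure~\ref{fig2_003}. Here the left, middle, and right diagrams each carry exactly one type~I vertex merging $\sigma$ and $\tau$ into $\sigma\tau$ (up to flip points), and the corresponding reference points $p_1,p_2,p_3$ sit in regions with different $G$-winding numbers. After pushing all flip points to the boundary via the rules of Figure~\ref{fig_F14}, the difference of the vertex contributions in the three configurations reduces exactly to an instance of the normalized $2$-cocycle equation
\[
\sigma(c(\tau,\gamma))+c(\sigma,\tau\gamma) \; = \; c(\sigma,\tau)+c(\sigma\tau,\gamma),
\]
(with the auxiliary $\gamma$ coming from the outside part of the diagram that provides the winding number) together with the symmetry \eqref{eq_co_symm}. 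This is the step I expect to be the main obstacle: bookkeeping the signs $\pm\omega(p)\,c(\cdot,\cdot)$ in Figure~\ref{fig_F22} and the effect of flip points in Figure~\ref{fig_F15} so that the difference organizes into a single coboundary $\delta c=0$ relation. The normalization \eqref{eq_unit_2} is needed whenever one of the strands in a rotation move degenerates to a $1$-labelled strand.

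Finally, for (iii) and (iv) I would handle flip points using their reduction via lollipops (Figure~\ref{fig_F12}): a flip point converts to a type~I vertex plus a $1$-labelled edge/loop, and the contribution assigned to it in Figure~\ref{fig_F22} is precisely the $c(\sigma,\sigma^{-1})$ coming from this reduction. Cancellation of two adjacent flip points, as in Figure~\ref{fig3_015}, then follows from \eqref{eq_co_symm} together with the sign conventions in Figure~\ref{fig_F22}, exactly as in the computation of Figure~\ref{fig_C6} for $\alpha_f$; sliding a flip point past an extremum or past a vertex is checked the same way. Assembling these local verifications covers all generating moves, and proves that $\alpha_c$ descends to isotopy classes.
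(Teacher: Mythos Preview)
Your overall architecture matches the paper's: reduce to local moves, with the vertex rotation being the core computation driven by the $2$-cocycle identity and \eqref{eq_co_symm}. But two concrete points in the plan are wrong and would not go through as written.

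First, your treatment of (i) is incorrect. You claim that moving a vertex through a local extremum (the Figure~\ref{fig_C4} move) is trivial because ``the same $c(\cdot,\cdot)$ term gets multiplied by the identical group element.'' In the $2$-cocycle setting, unlike the $1$-cocycle one, cups and caps themselves carry weight $\pm\omega(p)\,c(\sigma,\sigma^{-1})$ (bottom row of Figure~\ref{fig_F22}). So when a vertex slides through an extremum, both the vertex contribution \emph{and} the cap contribution change (the cap label changes from $\sigma\tau$ to $\sigma$, say), and equality is not a matter of matching winding numbers. This is exactly what the paper's explicit rotation check in Figure~\ref{fig2_003} handles: the rotated diagram carries one vertex \emph{plus} two caps and a cup, and collapsing all of that down to the single term $\omega(p_1)c(\sigma,\tau)$ requires applying the $2$-cocycle relation~\eqref{eq_cocondition_2} twice together with the normalization~\eqref{eq_unit_2}. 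Your (i) and (ii) are really the same move, and it is not trivial.

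Second, your handling of flip points is off. You say the contribution assigned to a flip point in Figure~\ref{fig_F22} is ``precisely the $c(\sigma,\sigma^{-1})$''; in fact the last two columns of that table are $0$. Flip points carry no weight here, so creation/annihilation of a pair and sliding past a vertex are genuinely free. The only nontrivial flip-point move is sliding through an extremum (Figure~\ref{fig2_005}), where the cap's label changes from $\sigma$ to $\sigma^{-1}$ and the winding number shifts by $\sigma$; this is exactly \eqref{eq_co_symm}, as the paper verifies directly. The references to Figures~\ref{fig_F14} and~\ref{fig_C6} are from the $1$-cocycle section and do not carry over.
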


\begin{proof}
We will show the invariance under rotation of a vertex in Figure~\ref{fig2_003}. 
The element of $U$ associated to the diagram on the left hand side in Figure~\ref{fig2_003} is 
\begin{equation}
\label{eq_LHS_cw}
-\omega(p_2)c(\tau^{-1},\sigma^{-1}) +\omega(p_1)c(\sigma,\sigma^{-1}) + \omega(p_3)c(\tau,\tau^{-1}) - \omega(p_1)c(\sigma\tau, (\sigma\tau)^{-1}).
\end{equation} 
We also have 
\begin{equation}
\label{eq_pushing_floating_pts}
\omega(p_2)=\omega(p_1)\sigma\tau 
\hspace{0.5cm} 
\mbox{ and } 
\hspace{0.5cm} 
\omega(p_3)=\omega(p_1)\sigma. 
\end{equation}
Rewrite \eqref{eq_pushing_floating_pts} 
as 
\begin{equation}
\label{eq_sigmatau}
\omega(p_2)c(\tau^{-1},\sigma^{-1}) = 
\sigma\tau\omega(p_1)c(\tau^{-1},\sigma^{-1}) 
\end{equation}
and 
\begin{equation}
\label{eq_sigma}
\omega(p_3)c(\tau,\tau^{-1}) = 
\sigma \omega(p_1)c(\tau,\tau^{-1}).
\end{equation}
Applying the 2-cocycle condition~\eqref{eq_cocondition_2} to the right hand side of \eqref{eq_sigmatau}, we have 
\begin{equation}
\label{eq_2cocycle_sigmatau}
\sigma\tau c(\tau^{-1},\sigma^{-1})
= c(\sigma\tau,\tau^{-1}) + c(\sigma\tau\tau^{-1},\sigma^{-1}) - c(\sigma\tau,\tau^{-1}\sigma^{-1}).
\end{equation}
Similarly, we apply 
\eqref{eq_cocondition_2} 
to the right hand side of \eqref{eq_sigma} to obtain 
\begin{equation}
\label{eq_2cocyle_sigma}
\sigma c(\tau,\tau^{-1}) = 
c(\sigma,\tau) + c(\sigma\tau,\tau^{-1})
- c(\sigma,\tau\tau^{-1}). 
\end{equation}
Combine \eqref{eq_2cocycle_sigmatau} and \eqref{eq_2cocyle_sigma} into 
\eqref{eq_LHS_cw} to obtain 
\begin{align*}
-&\cancel{\omega(p_1) c(\sigma\tau,\tau^{-1})} 
-\cancel{\omega(p_1) c(\sigma,\sigma^{-1})} 
+ \cancel{\omega(p_1) c(\sigma\tau,\tau^{-1}\sigma^{-1})}
+ \cancel{\omega(p_1) c(\sigma,\sigma^{-1})} \\
&+ \omega(p_1) c(\sigma,\tau) 
+ \cancel{\omega(p_1) c(\sigma\tau,\tau^{-1})}
- \cancel{\omega(p_1) c(\sigma,1)}
- \cancel{\omega(p_1)c(\sigma\tau, (\sigma\tau)^{-1})} \\
&= \omega(p_1) c(\sigma,\tau)
\end{align*}
since $c(\sigma,1)=0$. 
We therefore conclude 
\[
-\omega(p_2)c(\tau^{-1},\sigma^{-1}) +\omega(p_1)c(\sigma,\sigma^{-1})+\omega(p_3)c(\tau,\tau^{-1})-\omega(p_1)c(\sigma\tau, (\sigma\tau)^{-1})= \omega(p_1)c(\sigma,\tau). 
\]
For the counterclockwise rotation in Figure~\ref{fig2_003}, the proof that the diagrams in the middle and the right hand side 
of Figure~\ref{fig2_003} have the same evaluation is similar. 
This proves the compatibility under rotations of vertices.

\input{fig2_004}

Now, consider Figure~\ref{fig2_004}. 
To evaluate the left hand side, introduce a flip point at the bottom so that the co-orientations about the vertex are all pointing in the same direction. 
Then move the flip point through the vertex, resulting in two flip points along the two edges above the vertex. Using Figure~\ref{fig_F22}, the evaluation in the middle of Figure~\ref{fig2_004} is $-\omega(p)c(\sigma,\tau)$ while the evaluation on the right is $-\omega(p)c((\sigma^{-1})^{-1},(\tau^{-1})^{-1})$. Since 
$-\omega(p)c(\sigma,\tau) = -\omega(p)c((\sigma^{-1})^{-1},(\tau^{-1})^{-1})$, this case is complete.

\input{fig2_005}

We will now check the invariance of the evaluation function when moving a flip point across a local extremum, see Figure~\ref{fig2_005}. 
Using Figure~\ref{fig_F22}, the left hand side of Figure~\ref{fig2_005} evaluates to $\omega(p_1)c(\sigma,\sigma^{-1})$ while the right hand side evaluates to $\omega(p_2)c(\sigma^{-1},\sigma)$. 
Sliding $p_2$ across the $\sigma$ edge gives $\omega(p_2)=\omega(p_1)\sigma$. 
This gives 
$\omega(p_2)c(\sigma^{-1},\sigma) = \omega(p_1)\sigma c(\sigma^{-1},\sigma)$.
Since $c(\sigma,\sigma^{-1}) = \sigma c(\sigma^{-1},\sigma)$ by \eqref{eq_co_symm}, we are done.  

Remaining details are left to the reader. 
\end{proof}

{\bf Boundary wall.}
One way to interpret local evaluation rules in Figure~\ref{fig_F22} and similar rules for a one-cocycle in Figure~\ref{fig6_001} and \ref{fig_F13} is by introducing a boundary wall that can absorb and emit parts of a diagram at the cost of producing dots. 
Namely, we add the following rules for modifying networks at the boundary (and discuss only the two-cocycle case here). 

We introduce relations in Figure~\ref{fig3_026}. The first relation says that the wall (the boundary of the half-plane, shown in blue) can absorb a $(\sigma,\tau)$ vertex at the cost of adding a dot $c(\sigma,\tau)\in U$ floating to the left of the $\sigma\tau$ line. The second relation absorbs an interval $\sigma\tau$ and a vertex in the opposite way, at the cost of $-c(\sigma,\tau)$ floating in the left region. These two relations interpret the first two entries in the Figure~\ref{fig_F22} table via the absorbing wall.  

\input{fig3_026}

The two relations are equivalent modulo the relation in Figure~\ref{fig3_009} on the right, as explained in Figure~\ref{fig3_027}. 
 
\input{fig3_027}

Figure~\ref{fig3_032} shows that a version of Figure~\ref{fig3_011} associativity relation at the wall is equivalent to the 2-cocycle condition \eqref{eq_cocondition_2} on $c$. 
Note that a dot labelled $c(\tau,\gamma)$ gets twisted by $\sigma$ when it crosses the $\sigma$-line. 

\vspace{0.07in} 

\input{fig3_032}

We add the relation that a flip point can be absorbed or emitted from the fall, at no cost, see Figure~\ref{fig3_028} on the left and the corresponding entry in Figure~\ref{fig_F22}. The relation on the right of  Figure~\ref{fig3_028} follows as shown in Figure~\ref{fig3_029}. 

\input{fig3_028}

\input{fig3_029}

\begin{example}
    Figure~\ref{fig3_030} shows two ways to simplify a pair of parallel arcs at the wall labelled $\sigma$ and $\sigma^{-1}$. Naturally, the resulting elements of $U$ are equal, due to the relation $c(\sigma,\sigma^{-1}) = \sigma(c(\sigma^{-1}, \sigma))$, which is \eqref{eq_co_symm}. 

\input{fig3_030}

\end{example}

In Figure~\ref{fig3_026} on the left 
the triangle with sides $\sigma$ and $\tau$ lines and a boundary interval as the base is simplified to a dot $c(\sigma,\tau)$ in the leftmost region and a vertical $\sigma\tau$ line. Alternatively, it can be simplified in two other ways, using the Figure~\ref{fig3_026} relation on the right to absorb either the $\sigma$ or the $\tau$ line and then reduce the arc with two endpoints on the boundary line. In Figure~\ref{fig3_031} we check that the simplification along the $\tau$-line yields the same answer, and a similar computation gives the same answer for the reduction along the $\sigma$-line. This is part of checking  that the rules in Figure~\ref{fig_F22} are consistent. 

\input{fig3_031}

\begin{proposition}\label{prop_closed_diagram}
      Any closed diagram without dots evaluates to $0$.
\end{proposition}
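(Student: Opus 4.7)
The plan is to reduce the evaluation of any closed dotless network to that of the empty network by combining isotopy invariance with the local cancellations forced by the cocycle identity. By Proposition~\ref{prop:invariance_rot_vert} I may first isotope $\mcN$ into a bounded region of the plane with generic Morse $y$-projection, so that its critical points are isolated trivalent vertices, flip points, and simple arc extrema occurring at distinct heights.

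Next I would place an auxiliary boundary wall (as in Figures~\ref{fig3_026}--\ref{fig3_032}) slightly below $\mcN$ and absorb the diagram into the wall one level at a time. Each elementary feature (type~I or type~II vertex, flip point, cap, cup) is governed by a wall rule, and the dot it emits into the leftmost region is precisely the contribution of that feature to $\alpha_c$ listed in Figure~\ref{fig_F22}, twisted by the appropriate winding number $\omega(p)$. Thus the accumulated floating dots after full absorption reproduce $\alpha_c(\mcN)$.

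Since $\mcN$ has no boundary, every feature absorbed into the wall must pair with a complementary feature higher up: every cap with a cup closing the same strand, every upward vertex with a downward vertex, and so on. Two local cancellation mechanisms suffice: the bigon cancellation computed in Figure~\ref{fig3_030}, which gives $c(\sigma,\sigma^{-1})=\sigma\,c(\sigma^{-1},\sigma)$ via \eqref{eq_co_symm}, and the triangle cancellation computed in Figure~\ref{fig3_032}, which is the 2-cocycle identity \eqref{eq_cocondition_2}. Flip points absorb at no cost (Figure~\ref{fig3_028}) and so do not obstruct the cancellation. Induction on the number of critical points of the $y$-projection then collapses $\mcN$ to the empty diagram, whose evaluation is $0$.

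The main obstacle I anticipate is handling the $\omega(p)$-twists consistently during the inductive step: a local cancellation internal to $\mcN$ typically occurs inside a nontrivial region, and the dots it produces must be transported to the leftmost region via the action of the winding numbers, which requires care. A clean way to organize this is to work inside the central extension $T_c$ of \eqref{eq_ext2}: lifting each $\sigma$-strand to the section $s(\sigma)\in T_c$ turns every local defect at a vertex, flip point, or extremum into an instance of $s(\sigma)s(\tau)=c(\sigma,\tau)s(\sigma\tau)$ or $s(\sigma)s(\sigma^{-1})=c(\sigma,\sigma^{-1})$, so that $\alpha_c(\mcN)$ measures the obstruction to lifting $\mcN$ consistently from $G$ to $T_c$. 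For a closed network in the contractible plane $\R^2$ this obstruction vanishes, giving the result.
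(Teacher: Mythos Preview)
Your Morse-theoretic outline has a genuine gap at the ``pairing'' step. The assertion that ``every cap pairs with a cup closing the same strand, every upward vertex with a downward vertex'' does not give a cancellation scheme: in a closed network the number of merges equals the number of splits, but there is no canonical bijection between them, and their contributions $\pm\omega(p)c(\sigma_p,\tau_p)$ do not match term by term. The bigon and triangle computations in Figures~\ref{fig3_030} and~\ref{fig3_032} are single instances of the cocycle identity, not a reduction mechanism; you have not said what decreases in your induction or why an arbitrary closed network can be brought to a disjoint union of bigons and triangles by evaluation-preserving moves. As written, the argument is circular: you need the cocycle identity to show that the moves you would like to apply preserve $\alpha_c$, but you have not specified the moves.

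The paper's proof is organized differently and avoids this problem. Instead of sweeping with a wall, it picks a bounded region $r$ of the network and uses the associativity/edge-flip moves of Figures~\ref{fig3_011}--\ref{fig3_010} (whose evaluation-invariance is exactly \eqref{eq_cocondition_2}) to reduce the number of edges of $r$ until $r$ is bounded by a single circle attached by a $1$-edge; this configuration evaluates to $0$ by normalization and can be erased. Each pass deletes one bounded region without introducing dots, so iterating kills all bounded regions and the residual network (circles, flip points) evaluates to $0$ by the last two columns of Figure~\ref{fig_F22}. The induction variable here is the number of bounded regions, which is what your argument is missing.

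Your second approach via the extension $T_c$ is a reasonable conceptual heuristic---essentially that $\alpha_c(\mcN)$ is the pairing of $[c]\in \mathsf{H}^2(G,U)$ with the $2$-cycle in $BG$ dual to $\mcN$, which vanishes because $\pi_2(BG)=0$---and the paper gestures at this in a later remark. But turning it into a proof requires identifying the combinatorial evaluation of Figure~\ref{fig_F22} with a simplicial cochain pairing, which is more work than the direct region-reduction argument.
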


\begin{proof} To check that any closed diagram $D$ evaluates to $0$, pick a bounded region $r$ of  $D$ with $k$ sides. 
 Cancel the co-orientation reversal dots on the sides in pairs so that each side has at most one dot. The idea is to use associativity relations in Figures~\ref{fig3_011} and~\ref{fig3_010} and their variations to reduce the number of sides of $r$ without changing the evaluation.  
 Note that two diagrams $D_1,D_2$ that differ by an edge flip as in Figure~\ref{fig3_011} on the left (vertical equality) have equal evaluation, due to the 2-cocycle property. 

For instance, evaluation invariance under the move on the left in Figure~\ref{fig3_010} is the relation
\[
c(\sigma,\tau)-c(\gamma,\gamma^{-1}\sigma\tau)= -\sigma(c(\sigma^{-1}\gamma,\sigma^{-1}\gamma\tau))+c(\sigma,\sigma^{-1}\gamma), 
\]
which is equivalent to the 2-cocycle property \eqref{eq_cocondition_2}. The move on the right in Figure~\ref{fig3_010} is the relation 
\[
-\sigma (c(\sigma^{-1}\gamma,\gamma^{-1}\sigma\tau))+c(\sigma,\sigma^{-1}\gamma) = - c(\gamma,\gamma^{-1}\sigma)+\gamma c(\gamma^{-1}\sigma,\tau)). 
\]
This relation follows by expanding the two terms with the action of $\sigma$ and $\gamma$ into signed sums of cocycles via \eqref{eq_cocondition_2} and canceling the resulting terms. 

Using these two relations and moving the flip points on edges of $r$ through vertices, if needed, we can keep shrinking the number of edges of $r$ until it has only one edge left, as shown in Figure~\ref{fig8_011} on the left.  The remaining edge is a circle labelled by $\tau\in G$. This circle and the attached $1$-edge can be deleted from the diagram since $c(\tau,1)=1$. The opposite endpoint of the $1$-edge contributes $c(\sigma,1)=0$ for some $\sigma$, allowing to delete this configuration without changing the evaluation. 

\vspace{0.07in}

\input{fig8_011}

An example of dealing with dots is shown in Figure~\ref{fig8_011} on the right, where in a digon region a flip point is present on the boundary. We bend the diagram to evaluate it, resulting in $\sigma\tau(c(\tau^{-1},\tau))-c(\sigma,\tau)-c(\sigma\tau,\tau^{-1})=0$, matching the evaluation of the vertical line $\sigma\tau$. 

This chain of transformations reduces the number of regions of $D$ by one without introducing any dots and preserving the evaluation. 
Note also that a circle evaluates to $0$ and flip points contribute $0$ to the evaluation, see Figure~\ref{fig3_028}. 
These facts together imply that any closed diagram evaluates to $0$.
\end{proof} 

\begin{remark}
    The 2-cocycle evaluation $\alpha_c$ has a different flavor from the 1-cocycle evaluation in Section~\ref{subsec_one_cocycle}, for there a closed diagram without dots can evaluate to a nonzero element of $U$. 
\end{remark}

\begin{remark}
Proposition~\ref{prop_closed_diagram} implies that, given two dotless diagrams $D_1, D_2$ with the same top and bottom boundaries, having the bottom (blue) line swallow them results in the same value $x\in U$ for the dot in the terminal diagram that consists of vertical lines and a single floating dot in the leftmost region. Consequently, the evaluation $x$ of a dotless diagram $D$ with boundary depends only on the boundary of $D$. In this way, the function that to a dotless diagram $D$assigns its evaluation $x$ can be viewed as a one-dimensional theory, depending only on the sequences of labels and co-orientations at the top and bottom lines of $D$. 
\end{remark} 

\begin{corollary}
     The category $\mcC_c$ associated to a normalized cocycle $c$ is isomorphic to $\mcC_U$ as a monoidal category.
\end{corollary}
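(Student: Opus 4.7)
The plan is to mimic the argument used for the 1-cocycle case (isomorphism $\mcC_f\cong \mcC_U$ via the functor $\mathcal{F}_f$ in Figure~\ref{fig_F17}). We construct a monoidal functor $\mathcal{F}_c:\mcC_c\lra \mcC_U$ that is the identity on objects and, on each generating morphism, inserts floating $U$-labelled dots that precisely cancel the local contribution of that generator to the evaluation $\alpha_c$ recorded in Figure~\ref{fig_F22}. Concretely, a type~I merge vertex with labels $(\sigma,\tau,\sigma\tau)$ is sent to the same diagram with a floating dot labelled $c(\sigma,\tau)$ placed in a fixed adjacent region (say the one whose $G$-winding number relative to the ambient closure is trivial, matching the role of $\omega(p)$ in Figure~\ref{fig_F22}); a split vertex receives $-c(\sigma,\tau)$; the $\tau\sigma$ variants pick up the corresponding $c(\sigma^{-1},\tau^{-1})$ terms; flip points, cups, caps, and floating dots are sent to themselves; and floating dots are preserved. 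The inverse functor is defined by inserting the opposite dots.

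Well-definedness on morphisms reduces to checking that for any closure of a diagram $D$ one has $\alpha_c(D)=\alpha_U(\mathcal{F}_c(D))$, since morphism equality in both categories is governed by the universal construction applied to the respective evaluations. By construction the local contributions in Figure~\ref{fig_F22} are absorbed into the inserted dots, and the remaining contributions for both sides agree with $\alpha_U$. Equivalently, one checks that the generating relations of $\mcC_c$ (isotopy, associativity and merge-split invertibility, flip point slides, and type~II vertex rotations) go over to relations of $\mcC_U$ after the dot insertions. The associativity relation near a merge (the 2-cocycle analogue of Figure~\ref{fig_F11}, illustrated at the boundary wall in Figure~\ref{fig3_032}) becomes precisely the 2-cocycle condition \eqref{eq_cocondition_2}; the cancellation of adjacent flip points uses the normalized case of \eqref{eq_co_symm}; and the rotation invariance of type~II vertices is exactly the computation carried out in Proposition~\ref{prop:invariance_rot_vert}. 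Normalization $c(\sigma,1)=c(1,\sigma)=0$ ensures that morphisms involving the unit object $\one=X_1$ are sent to themselves.

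The monoidal structure is preserved because insertions are local and placed in regions whose winding numbers are unchanged under horizontal juxtaposition of diagrams. Functoriality under composition holds because stacking two diagrams in $\mcC_c$ and then twisting yields the same element of $U$ as twisting each separately and composing, by the additive nature of $\alpha_U$ on floating dots and the already-established invariance. The functor is bijective on hom sets: between isomorphic objects $X_\sigma,X_\tau$ (with $\sigma=\tau$) both hom sets are $U$-torsors, and $\mathcal{F}_c$ restricts to a $U$-equivariant map (acting by the same translation on dots placed in the leftmost region), hence a bijection; between non-isomorphic objects both hom sets are empty.

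The main obstacle, as in the 1-cocycle case, is bookkeeping: one must fix once and for all the region in which each dot is inserted, and then verify that all local moves compose correctly. However, the hardest piece of this bookkeeping has already been discharged in Propositions~\ref{prop:invariance_rot_vert} and~\ref{prop_closed_diagram}, which show respectively that rotation of vertices is controlled by the 2-cocycle identity and that any closed dotless diagram evaluates to $0$. Together these imply the well-definedness and faithfulness of $\mathcal{F}_c$, and the corollary follows.
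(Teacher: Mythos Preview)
Your approach works in spirit but is more elaborate than needed, and it overlooks one ingredient. The paper's proof is much shorter: it uses the \emph{identity} on diagrams as the isomorphism. The point is that Proposition~\ref{prop_closed_diagram} says every closed dotless diagram has $\alpha_c$-evaluation $0$, and the dot contributions to $\alpha_c$ and $\alpha_U$ are literally the same formula. Hence $\alpha_c(\wmcN)=\alpha_U(\wmcN)$ for \emph{every} closed $(G,U)$-network $\wmcN$. Since morphism equality in each category is defined via the universal construction (equal iff all closures evaluate equally), the two equivalence relations on diagrams coincide, and the identity on diagrams descends to a bijection on hom sets. No dot insertions are required.

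Your twist-by-dots functor $\mathcal{F}_c$ is essentially what the paper records separately in Remark~\ref{rmk_automorphism} as a monoidal \emph{automorphism} of $\mcC_U$; composing it with the identity isomorphism above gives back your map, so the two approaches are related by this automorphism. If you want to keep your construction, note a small gap: you write that cups and caps are sent to themselves, but the second row of the table in Figure~\ref{fig_F22} shows they contribute $\pm\,\omega(p)\,c(\sigma,\sigma^{-1})$ to $\alpha_c$, so they too must receive floating dots for your equation $\alpha_c(D)=\alpha_U(\mathcal{F}_c(D))$ to hold. Also, the phrase ``the region whose $G$-winding number relative to the ambient closure is trivial'' is not quite right, since that winding number depends on the closure; you simply want the region marked $p$ in the table. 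With those fixes your argument goes through, but the paper's route via Proposition~\ref{prop_closed_diagram} bypasses all of this bookkeeping.
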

\begin{proof}
This is immediate due to Proposition~\ref{prop_closed_diagram}, since the evaluation of a closed dotless diagram is trivial, and dots are transformed in the same way in both twisted and untwisted diagrammatics. The significance of $c$ is in a different evaluation for diagrams with boundary. 
\end{proof}

\begin{remark}\label{rmk_automorphism}
    A 2-cocycle $c$ as above gives a monoidal automorphism of $\mcC_U$, where each generating morphism is twisted by a dot placed in the region marked by $p$ in Figure~\ref{fig_F22}, and the dot is labelled by the coefficient given in the table in that figure (for instance, coefficient $-c(\sigma,\tau)$ for the second entry in the first row of the table).   
\end{remark}

\begin{remark} \label{rmk_central_ext} 
Elements of the central extension $T_c$ of $G$ by $U$ in \eqref{eq_ext2} have an  interpretation as shown in Figure~\ref{fig_F23}, where an element is represented by a vertical line $\sigma$ and a dot $x\in U$. The multiplication of two such diagrams is given by merging the lines and reducing the diagram to a dot and a vertical line, as shown in the same figure. Elements of the central normal subgroup $U\subset G$ correspond to the diagrams with $\sigma=1$, and the vertical line then can be erased, leaving us with a dot labelled $x\in U$.  
\end{remark} 

\input{fig_F23}

{\bf Changing by a coboundary.}
Given a rescaling $b$ as in \eqref{eq_func_b}, consider the locally defined map 
\[
r_b: \Net \lra \Net
\]
given by scaling each boundary point of the diagram by placing $b(\sigma)$ to the region on the left if the corresponding edge is co-oriented to the left, and $b(\sigma^{-1})$ if the edge is co-oriented to the right, see Figures~\ref{fig3_033} and \ref{fig3_034}, where new lines (after rescaling) are shown as  spirals. 

For a network of spiral lines, each endpoint at the boundary is rescaled as above. 
 In particular, if an interval enters at both endpoints, both  endpoint are rescaled. The rest of the diagram is left untouched (intervals and circles disjoint from the boundary and dots do not contribute to the scaling). 

 \input{fig3_033}
 
 \input{fig3_034}

 \vspace{0.07in} 

 We show in Figures~\ref{fig3_033} and \ref{fig3_034} how the cocycle changes after this twisting, matching the formula \eqref{eq_coc_change}. 

\begin{prop} There is a commutative diagram with horizontal arrow isomorphisms -- the rescaling $r_b$ and the isomorphism $T_{c'}\cong T_c$ described earlier and evaluation maps as vertical arrows: 
\[\begin{tikzcd}
\Net \arrow{r}{r_b} \arrow[swap]{d}{\brakspace{}_{c'}} & \Net \arrow{d}{\brakspace{}_c} \\
T_{c'} \arrow{r}{\cong} & T_c
\end{tikzcd}
\]
\end{prop}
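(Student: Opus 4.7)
The plan is to prove commutativity by working on elementary diagrams and invoking the monoidal/pivotal structure. First I would fix the isomorphism $T_{c'} \cong T_c$ explicitly via the rescaled section: writing elements of $T_c$ as $(u,\sigma)_c := u\, s(\sigma)$ with $s(\sigma)s(\tau) = c(\sigma,\tau)s(\sigma\tau)$, and similarly for $T_{c'}$ with $s'(\sigma) = b(\sigma)s(\sigma)$, the identification sends
\[
(u,\sigma)_{c'} \longmapsto (u+b(\sigma),\sigma)_c.
\]
The computation in the discussion leading to \eqref{eq_coc_change} shows this is a group isomorphism respecting \eqref{eq_ext2}.

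Next I would check commutativity on the simplest diagram, a single vertical $\sigma$-strand (left-cooriented) with a floating dot $x$ in the leftmost region. Per Remark~\ref{rmk_central_ext}, $\brak{D}_{c'} = (x,\sigma)_{c'}$, which maps under the isomorphism to $(x+b(\sigma),\sigma)_c$. On the other hand, $r_b(D)$ is the same diagram with an additional $b(\sigma)$-dot in the leftmost region, so $\brak{r_b(D)}_c = (x+b(\sigma),\sigma)_c$. For a right-cooriented $\sigma$-strand, the convention in Figure~\ref{fig3_033} inserts $b(\sigma^{-1})$ instead, and compatibility follows from the relation $f(\sigma) = -\sigma f(\sigma^{-1})$-type identity being replaced by the simple fact that a right-cooriented $\sigma$-strand encodes the element $\sigma^{-1}$ at the corresponding boundary endpoint.

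The crux is the Y-vertex, where two strands $\sigma,\tau$ merge into $\sigma\tau$ at the wall. This is precisely the content of Figures~\ref{fig3_033} and \ref{fig3_034}: the rescaling inserts three dots $b(\sigma)$, $b(\tau)$, $-b(\sigma\tau)$ (the last one from the outgoing $\sigma\tau$-strand, where the rescaling contribution enters with opposite sign as in Figure~\ref{fig3_034}), and these combine with the original $c(\sigma,\tau)$-contribution produced by the wall-absorption rule in Figure~\ref{fig3_026} to give $b(\sigma)+\sigma(b(\tau))+c(\sigma,\tau)-b(\sigma\tau) = c'(\sigma,\tau)$, matching \eqref{eq_coc_change}. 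The analogous verification for flip points (their rescaling is trivial on both sides, since a flip point alone is absorbed into the wall with no cost by Figure~\ref{fig3_028}, and $b$ is normalized so $b(1)=0$) and for cups/caps touching the wall reduces to the identity $c(\sigma,\sigma^{-1}) = \sigma c(\sigma^{-1},\sigma)$ of \eqref{eq_co_symm} applied to the difference $c'-c$.

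Since the monoidal category of diagrams (modulo isotopy) is generated by these elementary pieces, and since both $r_b$ and the isomorphism $T_{c'} \cong T_c$ are compatible with composition by construction, commutativity of the square extends from generators to arbitrary diagrams. The main obstacle is bookkeeping: one must carefully track signs of rescaling contributions at different co-orientations of boundary endpoints and verify that the formula $c'-c = \delta b$ is consistently produced from rescaling contributions for every local generator. Once the local checks pass, the global commutativity is automatic.
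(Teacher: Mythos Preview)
Your proposal is correct and aligns with the paper's approach: the paper does not give an explicit proof after the proposition, relying instead on the computation in Figures~\ref{fig3_033} and~\ref{fig3_034} to show that rescaling by $b$ produces exactly the coboundary formula \eqref{eq_coc_change}. Your contribution is to make the argument systematic---fixing the isomorphism $(u,\sigma)_{c'}\mapsto (u+b(\sigma),\sigma)_c$ explicitly, checking the single-strand and merge generators, and then invoking generation of the diagrammatic category---which is precisely the right way to turn the paper's figure-based computation into a proof.

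One small clarification: your remark about cups and caps ``touching the wall'' reducing to \eqref{eq_co_symm} applied to $c'-c$ is slightly imprecise. An arc with both endpoints at the wall acquires two rescaling dots under $r_b$, and the check that these are consistent with the arc's own wall-evaluation (Figure~\ref{fig3_028} right) is really just the statement that the coboundary $\delta b$ satisfies the same normalized-cocycle identity \eqref{eq_co_symm} as $c$ and $c'$ do---which is automatic since $\delta b$ is a cocycle. This is a bookkeeping point, not a gap.
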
 

Thus, to each element of $\mathsf{H}^2(G,U)$ there is assigned an isomorphism class of diagrammatic calculus of $G$-networks in the half-plane.  

\begin{remark}\label{rmk_two_param}
    Given a 1-cocycle $f\in \mathsf{Z}^1(G,U)$ and a 2-cocycle $c\in \mathsf{Z}^2(G,U)$, the evaluations described in Sections~\ref{subsec_one_cocycle} and~\ref{subsec_two_cocycle} can be added and their sum used to define a monoidal category $\mcC_{c,f}$ that generalizes both $\mcC_c$ and $\mcC_f$. 
\end{remark}

\begin{remark} This and the previous sections explain how one-cocycles and normalized two-cocycles allow to twist the graphical calculus for the category $\mcC_U$. Zero-cocycles can be added as well. They correspond to $G$-invariant elements $z\in U^G$. One can add $z$ to an evaluation of a closed diagram for a one-cocycle, by placing $z$-labelled dot anywhere in the diagram. This does not change the skein relations for the category $\mcC_f$, for a one-cocycle $f$, but it kind of allows to involve $0$-cocycles for $(G,U)$ in the construction, albeit in a trivial way. 
\end{remark}

\begin{remark}
    Labeling domain walls in TQFTs by elements of a finite group $G$ and introducing 2-cocycles in relation to codimension two submanifolds (seams) where domain walls for $g$ and $h$ merge into a $gh$-wall is common in condensed matter and  physics TQFT literature, see for instance~\cite[Figure 25]{BCHKTZ23} and~\cite{Tach20}. In the physics literature the action of $G$ on $U$ is usually trivial, and floating labelled dots in the regions can then be taken out of the diagram as scalars. Another difference is that in the present paper the action on $U$ is additive, while in the physics literature the action is multiplicative, where one typically considers $\mathsf{H}^n(G,\C^{\ast})$ or a similar group.  
    Nontrivial multiplicative actions of a group $G$ on a ring $R$ appear in the theory of the Brauer group, and we hope to discuss those in the framework of suitable monoidal categories and their diagrammatics in a follow-up paper. 
\end{remark}

\begin{remark}
$G$-networks in the plane, of type I, with or without dots, can be given a topological interpretation. 
Consider the classifying space $BG$ of a group $G$. It has a cell decomposition with a single 0-cell $x_0$, edges labelled by elements of $G$, 2-simplices labelled by ordered pairs of elements of $G$, and so on. 
A planar $G$-network $\mcN$ in a bounded region $X=[a,b]\times [0,1]$ of the strip $\R\times [0,1]$ of a plane gives rise to a simplicial map $X\lra BG$. 
This map is constructed by forming a triangulation of $X$ Poincare dual to the network $\mcN$. Figure~\ref{fig8_012} shows the dual simplex to a vertex of a network. 

\vspace{0.07in}

\input{fig8_012}

The triangulation is decorated: edges are oriented and labelled by elements of $G$, so that a 2-simplex carries elements $\sigma,\tau,\sigma\tau\in G$ on its edges. Such a triangulation defines a simplicial map $\phi_{\mcN}:X\lra BG$. 

If network $\mcN$ is closed (has no boundary points), $\partial X$ is mapped to the base point of $BG$. The map $\phi_{\mcN}$ can then be viewed as a simplicial and basepoint-preserving map $\phi_{\mcN}:\SS^2\lra BG$  of a 2-sphere into the classifying space. 

Since the latter is a $K(G,1)$-space, map $\phi_{\mcN}$ is null-homotopic, through a CW-homotopy. The latter homotopy $\phi^t_{\mcN}:\SS^2\times [0,1]\lra BG$ contracts $\phi_{\mcN}(\SS^2)$ to a point and can be made combinatorial. The corresponding transformations of the network should match relations including those in Figures~\ref{fig3_011} through~\ref{fig_K3}. 

\vspace{0.07in}

Suppose given a representation $U$ of $\Z[G]$. Consider a connected CW-complex $Y$ with 
\[
\pi_1(Y)\cong G, \ \pi_2(Y)=U, \ 
\]
such that $\pi_1(Y)$ acts on $\pi_2(Y)$ via the representation $U$. A detailed analysis of related spaces can be found in~\cite{MW50}. Form a fibration $p:Y\lra BG$ with fibers homotopy equivalent to a space $W$ with $\pi_1(W)=1$ and $\pi_2(W)\cong U$. Monodromy action of $G\cong \pi_1(Y)\cong \pi_1(BG)$ on $W_0:=p^{-1}(x_0)$, the fiber above the basepoint $x_0\in BG$, matches the action of $G$ on $U$. 

Our setup also requires existence of a section $\iota:BG\lra Y$ so that $p\iota=\id_{BG}$. Pick the base point  $w_0=W_0\cap  \iota(BG)$ above $x_0$.

Given a diagram $D$ of a type I $G$-network with floating dots labelled by elements of $U$, in a patch $X$ of the plane $\R^2$, we can construct a map $X\lra Y$ corresponding to $D$. For a dot $d\in U$ labelled $a\in U$, take a tubular neighbourhood $V(d)\subset X$ and send it to a 2-sphere $\SS^2\in W_0$ representing element $a$ in $U\cong \pi_2(W_0,w_0)$, so that the boundary of the neighbourhood maps to the base point $w_0$. Neighbourhood of a trivalent vertex for lines $\sigma,\tau$ merging into $\sigma\tau$ is sent to the 2-simplex for $(\sigma,\tau)$ in the classifying space $BG\subset Y$. Furthermore, points of $X$ away from a tubular neighbourhood of lines and dots in $D$ are sent to the basepoint $w_0$. 
  
With this convention, relations on these networks in Section~\ref{subsec_monoidal_U} can be converted into rel boundary homotopies of maps $X\lra Y$ associated to networks. 
A 2-cocycle $c:G\times G\lra U$ can be suitably interpreted in this setup as well. 
 We plan to provide a detailed correspondence in another paper.  
\end{remark}

%
%

\section{Diagrammatics for two types of lines}
\label{section_two_lines}


\subsection{Crossed products of groups and overlapping networks of two kinds}\label{subsec_cross_prod}

$\quad$

Suppose $U$ is a group, not necessarily abelian, and we are given a group homomorphism $\psi:G\lra \Aut(U)$. This data $(U,G,\psi)$ determines a split extension of groups 
\begin{equation}\label{eq_UTG}
\xymatrix{
1 \ar[r] & U \ar[r] & T \ar[r] & G \ar@{..>}@/_1.25pc/[l]_s \ar[r] & 1, 
}
\end{equation}
or, equivalently, describes the semidirect product $T=U\rtimes G$. Its elements are $a\cdot\sigma$, with the multiplication $(a\cdot\sigma)(b\cdot\tau)=a\sigma(b)\cdot\sigma\tau$. A refinement of diagrammatics from Section~\ref{subsec_monoidal} gives a graphical calculus for a semidirect product, as follows. 

Although $G$ acts of $U$, since $U$ is no longer abelian one cannot use dots in the plane to denote its elements. Instead, we use co-oriented lines labelled by elements of $U$ which can merge and split, as in Section~\ref{subsec_monoidal}. These lines are shown as thin lines in black throughout this section, referred to as \emph{$U$-lines}, see Figure~\ref{fig_K1} top left.  

Elements of $G$ are labels of co-oriented lines of the second kind, depicted as thick wavy red lines. They can merge and split as well, in the usual way consistent with the setup of Section~\ref{subsec_monoidal}, see Figure~\ref{fig_K1} top middle. They are referred to as \emph{$G$-lines}. 
A $G$-line and a  $U$-line can intersect, and the label of the $U$-line is changed by the action of $G$ on it, see Figure~\ref{fig_K1} top right.  

\input{fig_K1}

There are a number of moves on these diagrams -- we leave working out a full set of them to the reader. Moves on networks 
with edges labelled by elements of a group were discussed in Section~\ref{subsec_monoidal}, and the moves there apply to both $U$-line and $G$-line networks. In general, a $U$-line and a $G$-line networks overlap, forming a single network. 
     
There are local moves for these overlapping networks. 
In particular, 
$G$-lines and $U$-lines and their vertices can slide through each other as shown in Figure~\ref{fig_K1}. 

\begin{remark}\label{remark_add}
Additionally, one can allow flip points and vertices of type II for both $G$-lines and $U$-lines, with the obvious modification rules, as discussed in Section~\ref{subsec_monoidal}.
\end{remark}

An alternative way to get a set of local moves is to  consider the simplest evaluation as in Section~\ref{subsec_monoidal}, where each closed diagram of overlapping $U$- and $G$-networks is evaluated to $0\in \{0\}$. For this evaluation any modification of the diagram that preserves its boundary is an allowed move. The result is the monoidal category $\mcC_T$ for the semi-direct product group $T$ in \eqref{eq_UTG}, but with a slightly different diagrammatic description, using the above two types of lines. 
 
\input{fig_K2}

A general element $a\cdot\sigma$ of $T$, $a\in U,\sigma\in G$ 
gives an object of $\mcC_T$ represented by a pair of dots on a horizontal line, labelled by $a$ and $\sigma$, respectively, as shown on the boundaries of Figure~\ref{fig_K2} on the left. 
The figure, also on the left, shows the identity morphism of this object, described by a  pair of parallel lines labelled $a$ and $\sigma$. 

An arbitrary object of $\mcC_T$ is represented by a finite sequence of elements of $U$ and $G$, together with co-orientation data at these points. Two sequences represent the same element of $T$ if and only if there is a morphism (network) between them, and there is at most one morphism between any two objects of $\mcC_T$. 

With these conventions, Figure~\ref{fig_K2} on the right shows 
a morphism from the object $(a,\sigma,b,\tau)$, with all co-orientations to the left, to the object $(a\sigma(b),\sigma\tau)$, again with left co-orientations. This morphism is given by overlapping $U$- and $G$-networks, each a merge. Note how the label on the $b$-edge changes to $\sigma(b)$ as the $U$-edge crosses $G$-edge $\sigma$. 

\vspace{0.07in}

Given a left $\Z[T]$-module $M$, the diagrammatics can be further extended by allowing elements of $M$ to float in regions, subjects to the rules described in Section~\ref{subsec_monoidal_U}. This gives rise to a category equivalent to $\mcC_{T,M}$ but with a slightly different diagrammatics. Constructions of Sections~\ref{subsec_one_cocycle} and~\ref{subsec_two_cocycle} work in this case as well. 

\vspace{0.07in}

Assume now that $U$ is abelian. One can then introduce a crossing of two $U$-lines, with either co-orientations, as the composition of a merge followed by a split, see Figure~\ref{fig_F5}. The crossings satisfy the usual permutation relations in the symmetric group, for either co-orientations.  

Furthermore, for abelian $U$, we can introduce dots labelled by $U$ floating in regions of the plane separated by the $G$-network. These dots can freely cross $U$-lines and they interact with $G$-lines as described in Section~\ref{subsec_monoidal_U}. Assume that the lines shows in figures in that section are red. Any point has a $G$-index as in \eqref{eq_winding} and a closed diagram evaluates to an element of $U$. Note that the $U$-network does not contribute to the evaluation, the evaluation is the sum over floating dots of their labels twisted by the action of $G$ via $G$-winding numbers of the labels. 

\vspace{0.07in} 

On the other hand, if $U$ is abelian, we may drop using $U$-networks and just revert to $U$-labelled dots and $G$-networks, as in Section~\ref{subsec_monoidal_U}. Multiplication in the central extension $T$ is then encoded as in Figure~\ref{fig_F23} but with the trivial cocycle $c(\sigma,\tau)=0\in U$ for all $\sigma, \tau\in G$.  The cocycle can be chosen to be trivial since the extension is split. We then recover category $\mcC_c$ as in Section~\ref{subsec_two_cocycle}, for the zero cocycle $c$.

\begin{remark}
One can additionally fix a one-cocycle $f\in \mathsf{Z}^1(G,U)$ (still assuming abelian $U$) and change the evaluation by adding contributions as in Figures~\ref{fig6_001} and~\ref{fig_F13} from the $G$-lines (which are wavy lines shown in red in this section). One then gets a different monoidal category, which we denote $\mcC_{G,U}^f$. It has more interactions between dots and $G$-lines. For example, a $G$-circle can be converted to a dot, as in Figures~\ref{fig_F10} and~\ref{fig_F11}. 
\end{remark} 


\subsection{Two examples: crossed modules and nonabelian one-cocycles}\label{subsec_cross_mod}

$\quad$ 

This section suggests  three examples of diagrammatical calculi, one for $G$-crossed modules and two for noncommutative one-cocycles. In these examples two (three in the last case) types of lines naturally appear, together with some interactions between them. We plan to explore these diagrammatics in a follow-up paper, and this section is not needed for the later parts of the paper. 

\vspace{0.1in} 

{\bf Crossed modules.}
A $G$-crossed module $(U,G,\partial)$ consists of a group homomorphism $\partial: U\lra G$ and an action of $G$ on $U$ subject to conditions: 
\begin{enumerate}
    \item $\partial$ is a homomorphism of $G$-groups: $\partial({}^\sigma a)=\sigma\partial(a) \sigma^{-1}$ for all $a\in U,\sigma\in G$, 
    \item the Pfeiffer identity holds: ${}^{\partial b}a = b a b^{-1}$ for all $a,b\in U$. 
\end{enumerate}
Crossed modules were introduced by Whitehead~\cite{Whitehead49}. 
For a subspace $Y\subset X$ the boundary homomorphism $\partial:\pi_2(X,Y)\lra \pi_1(Y)$ is a crossed module. In any crossed module (an easy exercise or see~\cite[Lemma~3.2.3]{Borovoi17}, for instance),
\begin{itemize}
    \item $\ker \, \partial$ is central in $U$, 
    \item $\mathrm{im}\, \partial$ is normal in $G$, 
    \item $G/\mathrm{im}\, \partial$ acts on $\ker\,\partial$.  
\end{itemize}

For relations of crossed modules to nonabelian cohomology see~\cite{Borovoi17} and references therein.  

Crossed modules admit a diagrammatics of overlapping networks that refines one for crossed products. Consider networks for cross-product of $G$ by $U$ coming from the action of $G$ on $U$, so that one has 
\begin{itemize}
\item 
Co-oriented thin lines labelled $a\in U$, with merges and splits as in Figure~\ref{fig_K1} top left. 
\item Co-oriented wavy thick lines (shown in red) labelled $\sigma \in G$, with merges and splits as well. 
\end{itemize}
At the intersection of a thin $a$-line and a thick $\sigma$-line the label $a$ of the thin line as it moves across the thick line becomes $\sigma(a)$, see Figure~\ref{fig_K1} top right. Other relations from  Figure~\ref{fig_K1} hold as well.
 We allow co-orientation reversal points at both thin and thick wavy lines as usual, see Figure~\ref{fig3_015}. 

Now add a vertex where a thin line becomes a thick wavy red line, with label changing from $a\in U$ to $\sigma(a)\in G$, co-orientations pointing in the same direction, see Figure~\ref{fig_K5}.

\vspace{0.07in}

\input{fig_K5}

\vspace{0.07in}

Impose the relations in Figure~\ref{fig_K5}. Note that lines are co-oriented, and a thin line $b$ in the lower center diagram turns into wavy line $\partial b$, then turns back into $b$.   One should be able to build a monoidal category associated with a crossed module from these diagrammatics.

\vspace{0.1in}

{\bf Nonabelian one-cocycles.}
Consider groups $G,U$ which are noncommutative, in general. Suppose given a homomorphism $\psi:G\lra \Aut(U)$. A one-cocycle of $G$ with values in $U$ is a map $f:G\lra U$ such that 
\begin{equation}\label{eq_noncomm}
     f(\sigma\tau)=f(\sigma)\cdot \sigma(f(\tau)), \ \ \ \sigma,\tau\in G. 
\end{equation}
We write $f\in \mathsf{Z}^1(G,U)$. One-cocycles $f_1,f_2$ are called \emph{equivalent} if for some $u\in U$, 
\begin{equation}
   f_2(\sigma) = c^{-1} f_1(\sigma) \sigma(c), \ \ \sigma\in G. 
\end{equation} 
The set of equivalence classes for this relation is denoted $\mathsf{H}^1(G,U)=\mathsf{Z}^1(G,U)/\sim$ and called the first nonabelian cohomology of $G$ with coefficients in $U$, see~\cite[Section 27]{Milne17} and~\cite{Serre02}. It is a group only if $U$ is commutative and otherwise has a distinguished element, the class of 1-cocycles of the form 
\begin{equation}\label{eq_one_twist}
  \sigma \ \longmapsto \ b^{-1}\cdot \sigma(b), \ \ b\in U,  
\end{equation} 
also called \emph{principal 1-cocycles}. 

\vspace{0.07in} 

To give a diagrammatic interpretation of a nonabelian 1-cocycle $f$, start with the diagrammatics for the groups $G,U$ and a homomorphism $G\lra \Aut(U)$ as in Section~\ref{subsec_cross_prod}. Encode the map $f:G\lra U$ by allowing a $G$-line labelled $\sigma$ to turn into a $U$-line labelled $f(\sigma)$ via a valency two vertex , together with reversing co-orientation at it, see Figure~\ref{fig_G1} on the left. 

\input{fig_G1}

The one-cocycle condition can now be interpreted as equality of diagram on the right of that figure. Note that the $f(\tau)$-line intersects the $\sigma$-line and gets rescaled to $\sigma(f(\tau))$. The label at the top of the middle diagram is $f(\sigma)\sigma(f(\tau))$, and the equality forces it to equal $f(\sigma\tau)$. 

\input{fig_G2}

Figure~\ref{fig_G2} shows how to interpret formula \eqref{eq_one_twist}, that is, a twist of a one-cocycle $f_1$ by a coboundary. One draws a line labelled $c$ and intersecting the $\sigma$-line. That changes $c$ at the intersection to $\sigma(c)$. The three smooth lines are then merged into one, which is $c^{-1}f_1(\sigma)\sigma(c)$, and the order of merges is irrelevant. Principal one-cocycles correspond to erasing the line $f_1(\sigma)$, with the diagram being a loop around the vertex where the $\sigma$-line terminates, intersecting $\sigma$-line once.  

Noncommutative one-cocycles are common in mathematics and appear, for instance, in classification of twisted algebraic structures  under Galois extensions~\cite{Serre02,Milne17}, see also~\cite{Borovoi17}. Classification of principal $G$-bundles is via Cech 1-cocycles and cohomology, which is different from cocycles in $\mathsf{Z}^1(U,G)$ that we consider, however. 

 \vspace{0.1in} 
 
The notion of a noncommutative 1-cocycle in ergodic theory and dynamical systems~\cite{Parry74,Katok79,Lema97,Arn98} requires a more specialized setup. Following~\cite{Katok79}, we consider a Lebesgue measure space $(X,\mu)$ and two locally compact second countable groups $\Gamma,G$. 
Assume that $\Gamma$ acts on $X$ in a measure-preserving way, denoting the action by $S_{\gamma}x$, $\gamma\in \Gamma$, $x\in X$. 

A $G$-cocycle over $S$ is a measurable function $\alpha:X\times \Gamma\lra G$ such that 
\begin{equation}\label{eq_coc_ergodic}
    \alpha(x,\gamma_1\gamma_2) \ = \ \alpha(x,\gamma_1)\alpha(S_{\gamma_1}x,\gamma_2), 
\ \ x\in X, \gamma_1,\gamma_2\in \Gamma. 
\end{equation}

\vspace{0.07in}

\input{fig_K6}

\vspace{0.07in}

Such cocycles can be interpreted diagrammatically, via planar networks with lines of three types: 
\begin{itemize}
\item A line labelled $x\in X$. Network of such lines can have trivalent vertices where three $x$-lines come together, see Figure~\ref{fig_K6}. 
\item Co-oriented lines labelled by elements $\Gamma$ that can form a trivalent network as usual, see Section~\ref{sec_planar}, shown as double lines in  Figure~\ref{fig_K6}. 
\item Co-oriented lines labelled by elements of $G$ and forming trivalent $G$-networks, as usual. 
\end{itemize}
An $X$-line can intersect a $\Gamma$-line, changing its label from $x$ to $S_{\gamma}x$, see Figure~\ref{fig_K6}. A trivalent vertex where all three types of lines come together denotes the 1-cocycle $\alpha$. The cocycle equation \eqref{eq_coc_ergodic} has a diagrammatic interpretation shown in Figure~\ref{fig_K6}. 

\vspace{0.07in}

\input{fig_K7}

Figure~\ref{fig_K7} shows diagrammatic for replacing a cocycle by a cohomologous cocycle. 

\vspace{0.07in}

\begin{remark}
Rewriting the map $\alpha:X\times\Gamma\lra G$ as a homomorphism $\alpha':\Gamma\lra \mathsf{Maps}(X,G)$ from $\Gamma$ to the group of measurable maps $X\lra G$ presents these 1-cocycles as a special case of 1-cocycles in \eqref{eq_noncomm} with $G$ there replaced by $\Gamma$ and $U$ by $\mathsf{Maps}(X,G)$.
\end{remark} 

Abelian and non-abelian 1-cocycles have recently emerged in statistics and graphical models~\cite{DB24} and in graph algorithms and neural networks~\cite{DGPV23}.  

\begin{remark}
    Fox derivatives $\partial_i:\Z F\lra \Z F$, where $F$ is a free group, with $\partial_i(x_j)=\delta_{i,j}$ and $\partial_i(fg)=\partial_i(f)+f\partial_i(g)$ are abelian 1-cocycles and play a fundamental role in knot and 3-manifold theory~\cite{RRV99,CF77}. 
\end{remark}

\begin{remark}
    Differential of a smooth map between manifolds can be interpreted as a non-commutative 1-cocycle, see \cite{Uzman22}. This example is sufficiently general so that many functors from a topological to a linear category can be interpreted as noncommutative 1-cocycles. In the one-dimensional case, the chain rule for the derivative is the 1-cocycle condition~\cite{AKM10} in the monoid of smooth functions $\R\lra \R$. For more sophisticated examples of cocycles in differential geometry we refer to~\cite{Tab96} and references therein. Some examples of uses of low-degree cocycles in quantum theory can be found in~\cite{Mickelsson2010cda,Mick10}. 
\end{remark}

%
%

\section{Entropy and infinitesimal dilogarithms} 
\label{section:entropy}


\subsection{Entropy of a finite random variable and its cohomological interpretation}
\label{subsec_entropy}

$\quad$ 

{\bf The entropy equation.}
Shannon's entropy of a probability distribution $X$ on a finite space $X=\{x_1,\ldots, x_n\}$ that associates  probabilities $p_1,\ldots, p_n$, $\displaystyle{\sum_{i=1}^n} p_i=1$, $0\le p_i\le 1$ to points $x_1,\dots, x_n$ is given by 
\begin{equation}\label{eq_shannon} 
H(X) \ := \ -\sum_{i=1}^n p_i 
\log p_i. 
\end{equation}

When $n=2$, one can think of $H(X)$ as a function of a single probability $p=p_1$, and  the entropy is  
\begin{equation}\label{eq_H_2}
H(p) \ = \ - p\log p - (1-p)\log (1-p).  
\end{equation}
It is natural to extend function $H$ from $0<p<1$ to all real numbers by  
\begin{equation}\label{eq_H_3}
H(p) \ := \ \begin{cases}
    
- p\log|p| - (1-p)\log |1-p| & \mathrm{if} \ p\not=0,1, \\
 0 & \mathrm{if} \ p=0,1. 
\end{cases}
\end{equation} 
Following Leinster~\cite{Lein21}, consider a related function $\psi:\R\lra \R$, denoted $\partial_{\R}$ in~\cite{Lein21} and given by 
\begin{equation}\label{eq_psi_func}
    \psi(a) \ = \ \begin{cases} - a\log |a| & \mathrm{if} \ a\not= 0, \\
    0 & \mathrm{if} \ a=0.
    \end{cases} 
\end{equation}
Function $\psi$ is a \emph{non-linear derivation}: 
\begin{equation}\label{eq_psi_derivation}
    \psi(ab) = \psi(a)b + a\psi(b), \ \ a,b\in \R, \ \ \psi(1)=0. 
\end{equation}
For a finite sequence $\underline{a}:=(a_1,\dots, a_n)$ of real numbers define 
\begin{equation} \label{eq_H_undx}
    H(\underline{a}):=\sum_{i=1}^n \psi(a_i) - \psi \biggl( \sum_{i=1}^n a_i\biggr) 
\end{equation}
to measure non-additivity of $\psi$ on the sequence $\underline{a}$. 
When $\underline{a}$ is a sequence of probabilities for a finite random variable $X$, the last term is $\psi(1)=0$, and one recovers the entropy of $X$. 

Entropy function $H:\R \lra \R$ satisfies a four-term functional equation
\begin{equation}\label{eq_H} 
H(p) - H(q) +p H\left(\dfrac{q}{p}\right) + (1-p)H\left(\dfrac{1-q}{1-p}\right) = 0, \ \ q\in \R, \ p\in \R,\ \ p\not=0,1  \end{equation} 
and 
\begin{equation}\label{eq_H_4}
    H(1-p)=H(p), \ \ p\in \R.  
\end{equation} 
The functional equation for entropy first appeared in~\cite{Tver58}. Introduction  to~\cite{Vign20} contains more information on the history of the functional equation. 
Equations \eqref{eq_H} and \eqref{eq_H_4}, normalization $H(1/2) = \log 2$,  and the continuity (or measurability) property uniquely determine $H$ as a  function $\R\lra \R$, see~\cite{Lee64}. 

 \vspace{0.1in}

 Setting $q=p$ in \eqref{eq_H} gives $H(1)=0$ and, by symmetry, $H(0)=0$. 
Setting $q=1$ in \eqref{eq_H} implies 
\begin{equation}\label{eq_H_5}
    p H\biggl(\frac{1}{p}\biggr) = - H(p), \ \ p\not=0 ,   
\end{equation}
and allows to rewrite \eqref{eq_H} in a more symmetric form 
\begin{equation}\label{eq_H_symm}
    H(p)+(1-p)H\biggl( \frac{q}{1-p}\biggr) \ = \ H(q)+(1-q)H\biggl( \frac{p}{1-q}\biggr), \ \ p,q\not= 1. 
\end{equation}

\begin{remark}
    Uniqueness of the entropy function is lost if measurability condition is dropped. Let $d:\R\lra \R$ be a derivation of $\R$ viewed as a $\Z$-algebra (so that $d(x+y)=dx+dy$ and $d(xy)=d(x)y+yd(x)$ for all $x,y\in \R$). Such derivations exist (via the Zorn Lemma) due to the transcendental structure of the field $\R$, see~\cite[Chapter II]{ZS58} or~\cite[Theorem 14.2.2]{Kuc09}. 
    
    Following~\cite{Aczel78}, consider the function 
\begin{equation}
\label{eq_bad_entropy}
h(x) \ = 
\begin{cases} 
\dfrac{(dx)^2}{x(1-x)} & \mathrm{if} \ x\not= 0,1, \\
0 & \mathrm{if} \ x=0,1. 
\end{cases}
\end{equation}
This non-measurable function satisfies functional equations \eqref{eq_H} and \eqref{eq_H_4}.
\end{remark}

{\bf Cathelineau--Kontsevich symbol.}
To relate entropy to group cohomology, following Kontsevich~\cite{Kont02} and Cathelineau~\cite{Cath88,Cath96,Cath07,Cath11}, consider a nonlinear symbol map $\langle \:\:, \:\: \rangle_H: \R \times \R \lra \R$ given by 
\begin{equation}\label{eq_symbol}
    \langle a,b\rangle_H \ := \ \begin{cases} (a+b)H\left(\dfrac{a}{a+b}\right)  & \mathrm{if} \ a+b\not= 0, \\
    0 & \mathrm{if} \  a+b=0. 
    \end{cases} 
\end{equation}
We have 
\begin{equation}\label{eq_angle_d}
    \langle a,b\rangle_H \ =\ - a \log|a| - b \log|b| + (a+b)\log|a+b|\ =\ \psi(a)+\psi(b)-\psi(a+b),  
\end{equation}
so that $\langle a,b\rangle_H$ measures the failure of $\psi$ to be additive for the pair $(a,b)$. Note that formula 
\begin{equation}\label{eq_angle_two}
    \langle a,b\rangle_H \ =\ \psi(a)+\psi(b)-\psi(a+b), \ \ \ \ \psi(a) \ = \ \begin{cases} - a\log |a| & \mathrm{if} \ a\not= 0, \\
    0 & \mathrm{if} \ a=0. 
    \end{cases}
\end{equation}
holds for all $a,b\in \R$.
A direct computation establishes the following result. 
\begin{prop}\label{prop_entropy_cocycle} The symbol satisfies the following relations 
\begin{eqnarray}
    \label{eq_ent_one}
    \langle a,b\rangle_H + \langle a+b,c\rangle_H & = & \langle a,b+c\rangle_H + \langle b,c\rangle_H, \\
    \label{eq_ent_two}
    \langle a,b\rangle_H & =& \langle b,a\rangle_H, \\
    \label{eq_ent_three}
    \langle ca,cb\rangle_H & = & c \langle a,b\rangle_H .
\end{eqnarray}
\end{prop}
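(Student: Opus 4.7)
The plan is to prove all three identities by reducing everything to the expression $\langle a,b\rangle_H = \psi(a) + \psi(b) - \psi(a+b)$ from \eqref{eq_angle_two} and then exploiting the two key properties of $\psi$: its symmetry in its arguments at the level of the defining formula, and the non-linear derivation law \eqref{eq_psi_derivation}. In other words, once the symbol is rewritten via $\psi$, relations \eqref{eq_ent_one}--\eqref{eq_ent_three} become near-trivial algebraic manipulations, with the only subtlety being the degenerate cases where one or more of $a+b$, $b+c$, $a+b+c$, or the scalar $c$ vanish.

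First I would handle \eqref{eq_ent_two}: by \eqref{eq_angle_two},
\[
\langle a,b\rangle_H = \psi(a)+\psi(b)-\psi(a+b) = \psi(b)+\psi(a)-\psi(b+a) = \langle b,a\rangle_H,
\]
which is immediate and requires no case analysis because \eqref{eq_angle_two} is valid for all real $a,b$. Next, for \eqref{eq_ent_one}, I would expand both sides using \eqref{eq_angle_two}:
\[
\langle a,b\rangle_H + \langle a+b,c\rangle_H = \psi(a)+\psi(b)+\psi(c)-\psi(a+b+c),
\]
\[
\langle a,b+c\rangle_H + \langle b,c\rangle_H = \psi(a)+\psi(b)+\psi(c)-\psi(a+b+c),
\]
after the $\psi(a+b)$ and $\psi(b+c)$ terms cancel in pairs. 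Again \eqref{eq_angle_two} holds universally, so there is nothing to check in special cases.

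Finally, for \eqref{eq_ent_three}, I would apply the derivation identity \eqref{eq_psi_derivation}:
\[
\psi(ca) = c\,\psi(a) + a\,\psi(c), \qquad \psi(cb) = c\,\psi(b) + b\,\psi(c),
\]
\[
\psi(c(a+b)) = c\,\psi(a+b) + (a+b)\,\psi(c).
\]
Substituting into $\langle ca,cb\rangle_H = \psi(ca)+\psi(cb)-\psi(ca+cb)$, the three $\psi(c)$-terms combine as $a\,\psi(c)+b\,\psi(c)-(a+b)\psi(c) = 0$, leaving
\[
\langle ca,cb\rangle_H = c\bigl(\psi(a)+\psi(b)-\psi(a+b)\bigr) = c\,\langle a,b\rangle_H.
\]

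The main (and really only) obstacle is book-keeping in the edge cases: \eqref{eq_angle_two} already builds in the convention $\psi(0)=0$, so the formulas above remain valid even when $a+b = 0$, $b+c=0$, $a+b+c=0$, or $c = 0$, but I would explicitly verify each degenerate case by unwinding the convention in \eqref{eq_symbol}. For example, if $a+b = 0$ then $\langle a,b\rangle_H = 0$ by definition, and one checks that $\psi(a)+\psi(-a)-\psi(0) = -a\log|a| + a\log|a| - 0 = 0$, confirming consistency. With these checks, all three identities follow at once from the two simple properties of $\psi$.
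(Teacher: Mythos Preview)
Your proof is correct and is precisely the ``direct computation'' the paper alludes to just before stating the proposition; the paper gives no further details. You have simply spelled out that computation using \eqref{eq_angle_two} and the derivation property \eqref{eq_psi_derivation}, which is exactly the intended argument.
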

The first relation above is the two-cocycle relation. The second says that the symbol is symmetric in $a$ and $b$, and the third is a scaling property. 

Writing down relation \eqref{eq_ent_one} for $a+b+c=1$ and replacing $p=a+b,q=a$ gives the equation
\begin{equation}
\label{eq_H_modified} 
H(p) - H(q) +p H\left(\dfrac{q}{p}\right) - (1-q)H\left(\dfrac{1-p}{1-q}\right) = 0, 
\ \ q\in \R, \ p\in \R, \  p\not=0,1  \end{equation}
This equation is similar to \eqref{eq_H} but the last term is different. Applying relation \eqref{eq_H_5} with $\frac{1-q}{1-p}$ as the argument converts \eqref{eq_H_modified} to  \eqref{eq_H}. 
 
To understand relation \eqref{eq_H_5} cohomologically, write \eqref{eq_ent_one} for $(a,b,c)=(p,1-p,p-1)$ to get 
\begin{equation}
    \langle p, 1-p\rangle_H + \langle 1, p-1\rangle_H = \langle p,0\rangle_H + \langle 1-p,p-1\rangle_H = 0+0 = 0. 
\end{equation}
Note that $\langle 1,p-1\rangle_H=p\, H(1/p)$ and $\langle p, 1-p\rangle_H=H(p)$, resulting in \eqref{eq_H_5}. 

\vspace{0.07in}

Equation \eqref{eq_ent_three} is a scaling property of the 2-cocycle. It indicates that one should add a scaling group $\R^{\ast}$ acting on $\R$. Form the cross-product group of affine symmetries of the real line 
\begin{equation}\label{eq_aff}
\Aff_1(\R) = 
\left\{ 
\begin{pmatrix}
c & a \\ 0 & 1
\end{pmatrix} : c\in \R^{\times}, a\in \R 
\right\},
\end{equation}  
with the multiplication in $\Aff_1(\kk)$ given by: 
\begin{equation}\label{eq_aff_mult}
\begin{pmatrix}
c_1 & a_1 \\ 0 & 1
\end{pmatrix}
\begin{pmatrix}
c_2 & a_2 \\ 0 & 1
\end{pmatrix} = 
\begin{pmatrix}
c_1c_2 & c_1 a_2 + a_1 \\ 0 & 1
\end{pmatrix}. 
\end{equation} 
To extend the 2-cocycle $\brak{a,b}$ to $\Aff_1(\R)$ we use the following formula 
\begin{equation}\label{eq_two_coc_o}
    \langle (a_1,c_1),(a_2,c_2)\rangle_H \ := \ \langle a_1, c_1 a_2\rangle_H,
\end{equation}
which will be motivated later, in Section~\ref{sec_entropy_diagramm}, from the  diagrammatic description. 
One can check that \eqref{eq_two_coc_o} is a (normalized)  2-cocycle on $\Aff_1(\R)$ 
valued in $\R$,  
\[
\Aff_1(\R)\times \Aff_1(\R)\lra \R,
\]
where the action of $\Aff_1(\R)$ on $\R$ is $(a,c)x= cx$. 

\vspace{0.07in} 

Relation~\eqref{eq_angle_two} shows~\cite{Kont02} that the 2-cocycle $\langle a,b\rangle_H$ on $\R$ is the coboundary of the 1-cochain $\psi:\R\lra \R$.

\begin{prop}  Two-cocycle
\eqref{eq_two_coc} on $\Aff_1(\R)$ is not a coboundary. 
\end{prop}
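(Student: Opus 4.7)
The plan is to assume for contradiction that there exists a normalized 1-cochain $b : \Aff_1(\R) \lra \R$ with $db = \langle\,,\,\rangle_H$, and to derive from the coboundary equation that $b$ forces $\langle a_1,a_2\rangle_H$ to be identically zero on the translation subgroup, contradicting the fact that $\psi$ is not additive. Concretely, the coboundary relation reads
\[
\langle a_1, c_1 a_2\rangle_H = b(a_1,c_1) + c_1 b(a_2,c_2) - b(a_1+c_1a_2,\, c_1 c_2),
\]
where we have used that $(a,c)$ acts on $\R$ by multiplication by $c$ and that $(a_1,c_1)(a_2,c_2)=(a_1+c_1a_2,\,c_1c_2)$.

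The first step is to restrict to the translation subgroup by setting $c_1=c_2=1$; writing $f(a):=b(a,1)$, this gives $\langle a_1,a_2\rangle_H = f(a_1)+f(a_2)-f(a_1+a_2)$, so $f$ differs from $\psi$ by an additive function. Next, setting $a_2=0,\,c_1=1$ (and using $\langle a_1,0\rangle_H=0$) yields $b(a,c)=f(a)+g(c)$, where $g(c):=b(0,c)$. Substituting this form of $b$ back into the coboundary equation with $a_1=0$ and $c_2=1$ gives
\[
0 = g(c_1) + c_1 f(a_2) + c_1 g(1) - f(c_1 a_2) - g(c_1),
\]
that is, $f(c_1 a_2) = c_1 f(a_2) + c_1 g(1)$ for all $c_1\in\R^{\times}$ and $a_2\in\R$.

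From $\langle 0,0\rangle_H=0$ one obtains $f(0)=0$; plugging $a_2=0$ into the relation above then forces $g(1)=0$, so $f(c_1 a)=c_1 f(a)$ for all $c_1\in\R^{\times}$. Setting $a=1$ shows $f(x)=\lambda x$ for all $x\in\R$ with $\lambda:=f(1)$. Reinserting this into the identity of the first step yields $\langle a_1,a_2\rangle_H = \lambda a_1+\lambda a_2 -\lambda(a_1+a_2)=0$ for all $a_1,a_2\in\R$, which is false: for instance $\langle 1,1\rangle_H = \psi(1)+\psi(1)-\psi(2) = 2\log 2 \neq 0$. This contradiction shows that no such $b$ exists and therefore $\langle\,,\,\rangle_H\notin \mathsf{B}^2_0(\Aff_1(\R),\R)$.

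No step is really an obstacle; the main point is that the scaling/translation structure of $\Aff_1(\R)$ rigidifies any would-be 1-cochain so severely (through the $a_1=0$ slice) that its restriction to the translation subgroup must be linear, whereas $\psi$ is genuinely nonlinear. If one does not assume normalization of $b$, the same argument goes through with a harmless additive constant, since $f(0)=\psi(0)=0$ is forced by the $a_1=a_2=0$ specialization of the first step.
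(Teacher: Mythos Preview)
Your proof is correct and follows essentially the same approach as the paper's: restrict to $c_1=c_2=1$ to see that $f(a):=b(a,1)$ must satisfy $\langle a_1,a_2\rangle_H=f(a_1)+f(a_2)-f(a_1+a_2)$, use the $a_2=0$ slice to split $b(a,c)=f(a)+g(c)$, and then substitute back to reach a contradiction. The paper stops at ``substituting \ldots\ quickly leads to a contradiction,'' whereas you carry this out explicitly via the $a_1=0$ slice, which yields the homogeneity $f(ca)=cf(a)$ and hence forces $f$ to be linear; this is a clean way to make the final contradiction concrete. One tiny cosmetic point: once you have $b(a,c)=f(a)+g(c)$ and recall $f(a)=b(a,1)$, you already get $g(1)=0$, so the $c_1g(1)$ term in your displayed equation is zero from the outset and the subsequent argument to show $g(1)=0$ is redundant (though not incorrect).
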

\begin{proof} This is due to lack of solutions $\phi:\R\times \R^{\ast}\lra \R$ of the equation 
\begin{equation}\label{eq_two_var}
\langle a_1,c_1a_2\rangle_H = \phi((a_1,c_1))+\phi((a_2,c_2)) - \phi((a_1+c_1a_2,c_1c_2)). 
\end{equation}
Indeed, any other solution $\psi_1$ of \eqref{eq_angle_two} has the form $\psi_1=\psi+\rho$ where $\rho:\R\lra \R$ is homomorphism (not necessarily continuous). Then $\phi((a,1))=\psi(a)+\rho(a)$ for some $\rho$ and all $a\in \R$. 
Specializing $c_1=1$ gives 
\begin{equation}\label{eq_two_var_one}
\langle a_1,a_2\rangle_H = \phi((a_1,1))+\phi((a_2,c_2)) - \phi((a_1+a_2,c_2)) 
\end{equation}
and 
\begin{equation}\label{eq_two_var_two}
\phi((a_2,c_2))-\phi((a_2,1)) =  \phi((a_1+a_2,c_2))-\phi((a_1+a_2,1))
\end{equation}
so that 
\begin{equation}\label{eq_psi_ac}
\phi((a,c)) = \phi((a,1))+\tau(c)=\psi(a)+\rho(a)+\tau(c)
\end{equation}
for some function $\tau:\R^{\ast}\lra \R$ and a homomorphism $\rho$. Substituting formula \eqref{eq_psi_ac} in \eqref{eq_two_var} quickly leads to a contradiction.  
\end{proof} 

Thus, the 2-cocycle \eqref{eq_two_coc} is a coboundary for the group $\R$ but not a coboundary for the larger group $\Aff_1(\R)$.


\subsection{Cathelineau's infinitesimal dilogarithm}
\label{subsection:Cathelineau_vect_space}

Infinitesimal dilogarithms were defined by Cathelineau~\cite{Cath88,Cath96} and Bloch--Esnault~\cite{BE03}. These two types of infinitesimal dilogarithms are slightly different. The relation is clarified in Bloch--Esnault~\cite{BE03} (also see  Garoufalidis~\cite{Garou09} for the case of a  characteristic $0$ field, a survey by  \"Unver~\cite{Unv21}, and Elbaz-Vincent and Gangl~\cite{EVG02}). Throughout the paper we consider the Cathelineau dilogarithm, following~\cite{Cath11,Cath96}. Bloch--Esnault dilogarithm carries an additional parameter of  higher degree which we do not discuss. 

Starting with a field $\kk$, 
Cathelineau defines $J(\kk)$ as a $\kk$-vector space generated by symbols $\brak{a,b}$, over $a,b\in\kk$, subject to the relations in Proposition~\ref{prop_entropy_cocycle}:
\begin{gather}
  \label{eq_brak_one}  \brak{a,b}  =  \brak{b,a}, \\
  \label{eq_brak_two}
    \brak{ca,cb}  =  c \brak{a,b}, \\
    \label{eq_brak_three}
    \brak{a,b} + \brak{a+b,c}  = \brak{b,c} + \brak{a,b+c} ,
\end{gather}
thus generalizing the symbol from $\R$ to an arbitrary field. 
The relations imply that $\brak{a,0}=\brak{0,a}=0$ and, if $\mathsf{char}(\kk)\not=2$, $\brak{a,-a}=0$. 
Cathelineau also introduces the $\kk$-vector space $\beta(\kk)$  generated by symbols $[a],$ $a\in \kk^{\times}$ subject to defining relations 
\[
[1]=0 
\]
and 
\begin{equation}\label{eq_4_term}
    [a]-[b]+ a\biggl[ \frac{b}{a}\biggr] + (1-a)\biggl[ \frac{1-b}{1-a} \biggr]=0, \ \ a\in \kk\setminus\{0,1\}, \ b\in \kk^{\times}.  
\end{equation}
We write $\kk^{\times}=\kk\setminus \{0\}$ for the multiplicative group of $\kk$ and $\kk^{\ast}$ for the $\kk$-vector space dual of $\kk$. 

Cathelineau proves that, if $\mathsf{char}(\kk) \not= 2$, there is an isomorphism of $\kk$-vector spaces 
\begin{equation}\label{eq_isoR}
R: \beta(\kk)\stackrel{\cong}{\lra} J(\kk), \ \ 
[a]\longmapsto \brak{a,1-a}
\end{equation} 
with the inverse map $R^{-1}$ given by 
\begin{equation}\label{eq_inverseR} 
\brak{a,-a}\longmapsto 0, \ \ \brak{a,b}\longmapsto (a+b)\biggl[ \frac{a}{a+b}\biggr] \ \ \mathrm{if} \ a+b\not= 0. 
\end{equation}

If $\mathsf{char}(\kk) = 2$, the isomorphism is given by 
\[
R_2: \beta(\kk)\lra J(\kk), \ \ [a]\longmapsto \brak{1,1+a}, 
\]
with the inverse 
\[
R_2^{-1}: J(\kk) \lra \beta(\kk), 
\]
We state Theorem 8.10 in \cite{Cath07}.

\begin{theorem}[Cathelineau]
Let $\Omega^1_\kk$ be the space of degree one absolute K\"ahler differentials. If $\mathsf{char}(\kk)=0$, the following sequence of $\kk$-vector spaces is exact. 
\begin{equation}\label{eq_ses_1}
    0 \lra \beta(\kk)\stackrel{f}{\lra} \kk^+\otimes \kk^{\times} \stackrel{g}{\lra} \Omega_{\kk}^1 \lra 0. 
\end{equation}
If $\mathsf{char}(\kk) = p>0$, the following sequence of $\kk$-vector spaces is exact
\begin{equation}\label{eq_ses_2}
    0 \lra \kk \stackrel{h}{\lra}\beta(\kk)\stackrel{f}{\lra} \kk^+\otimes \kk^{\times} \stackrel{g}{\lra} \Omega_{\kk}^1 \lra 0. 
\end{equation}
In these sequences the maps are 
\begin{eqnarray*}
& & f([a])=a\otimes a+(1-a)\otimes (1-a), \ \ \mathrm{for} \ a\not= 0,1; \ \ f([0])=0, \\
& & g(b\otimes a) = b \frac{da}{a}=b\, d \log(a), \ \ \ h(1)=\sum_{c\in \kk_0}[c].
\end{eqnarray*}
\end{theorem}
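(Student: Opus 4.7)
The plan is to verify exactness of the sequence at each of its nodes, separating the purely formal steps from the main content. I will assume characteristic $0$ in what follows and remark on the modifications needed in characteristic $p$ at the end.

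\textbf{The easy steps: well-definedness, vanishing of the composition, surjectivity of $g$.} First I check that $f$ respects the four-term relation \eqref{eq_4_term}. Writing elements of $\kk^+\otimes \kk^\times$ in mixed additive/multiplicative notation (so that $x\otimes (yz) = x\otimes y + x\otimes z$ and $x\otimes 1 = 0$ in the second slot, while the $\kk$-action is on the first slot), I expand
\[
f([a]) - f([b]) + a\cdot f([b/a]) + (1-a)\cdot f([(1-b)/(1-a)])
\]
using $\kk$-linearity and the identity above. All terms cancel in pairs. Next, $g\circ f([a]) = a\cdot d\log a + (1-a)\cdot d\log(1-a) = da + d(1-a) = 0$. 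Surjectivity of $g$ follows from $g(a\otimes a) = da$ for $a\in \kk^\times$ (and $d0 = 0$), since $\{da : a\in \kk\}$ generates $\Omega^1_\kk$.

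\textbf{Exactness at $\kk^+\otimes \kk^\times$.} I handle this by a duality argument that is essentially formal. Let $M$ be any $\kk$-vector space. A $\kk$-linear map $\phi:\kk^+\otimes\kk^\times\to M$ is the same data as a group homomorphism $\widetilde\phi:\kk^\times\to M$ via $\phi(b\otimes a) = b\,\widetilde\phi(a)$. Vanishing of $\phi$ on $\mathrm{im}(f)$ is equivalent to the identity $a\,\widetilde\phi(a) + (1-a)\,\widetilde\phi(1-a)=0$ for all $a\neq 0,1$. Set $D(a):= a\,\widetilde\phi(a)$, $D(0):=0$. Multiplicativity of $\widetilde\phi$ gives the Leibniz rule $D(ab)=aDb+bDa$, and the above identity specialized at $c=a/(a+b)$ rewrites as $D(a+b)=Da+Db$. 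Hence $D$ is a derivation, so $\phi$ factors through $g$. Conversely any such $\phi$ visibly vanishes on $\mathrm{im}(f)$. Applying this with $M:=(\kk^+\otimes\kk^\times)/\mathrm{im}(f)$ and $\phi$ the quotient map forces $\ker(g)\subseteq\mathrm{im}(f)$, giving exactness.

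\textbf{Injectivity of $f$ (the main obstacle).} This is the substantive content of the theorem. To show $f$ is injective one must establish that \eqref{eq_4_term} captures \emph{all} the relations visible through the symmetric symbol $[a]\mapsto a\otimes a+(1-a)\otimes(1-a)$. My plan is to find a separating family of maps: for every nonzero $\xi=\sum n_i[a_i]\in \beta(\kk)$, exhibit a $\kk$-vector space $M$ and a homomorphism $\widetilde\phi:\kk^\times\to M$ not coming from a derivation, for which the induced map on $\beta(\kk)$ detects $\xi$. Equivalently, given any $\kk$-linear relation $\sum n_i(a_i\otimes a_i+(1-a_i)\otimes(1-a_i))=0$ in $\kk^+\otimes\kk^\times$, one must deduce that $\sum n_i[a_i]=0$ modulo the four-term relation. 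This requires a genuine structural analysis of $\beta(\kk)$, built on Cathelineau's identification of $\beta(\kk)$ with the infinitesimal part of the Bloch complex and the associated presentation in terms of the cross-ratio. This is the step where no purely formal manipulation suffices.

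\textbf{Characteristic $p$.} The only change is the appearance of $h(1)=\sum_{c\in \kk_0}[c]$. One checks directly that $f(h(1))=0$ in $\kk^+\otimes\kk^\times$ using the $c\mapsto 1-c$ symmetry of the prime field and the vanishing of $\sum_{c\in\mathbb{F}_p^\times} c\otimes c$ in $\kk^+\otimes\kk^\times$ (the second-slot multiplicativity reduces it to $1\otimes\prod c$, with $\prod_{c\in\mathbb{F}_p^\times}c=-1$, and $1\otimes(-1)=0$ when $p$ is odd, with the $p=2$ case trivial). The same separating-derivations argument, refined to account for the Frobenius-induced vanishing, shows that $\ker(f)$ is exactly the $\kk$-span of $h(1)$, and injectivity of $h$ is immediate since $h(1)\neq 0$ in $\beta(\kk)$.
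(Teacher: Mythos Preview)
The paper does not prove this theorem at all: it is simply quoted from Cathelineau's work with the line ``We state Theorem 8.10 in \cite{Cath07}.'' So there is no proof in the paper to compare your attempt against.

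That said, your proposal is not a proof either, and you acknowledge as much. The steps you do carry out --- well-definedness of $f$, $g\circ f=0$, surjectivity of $g$, and exactness at $\kk^+\otimes\kk^\times$ via the derivation/duality trick --- are correct and standard. But at the crucial step, injectivity of $f$, you only \emph{describe} a plan (``find a separating family of maps'') and then concede that ``no purely formal manipulation suffices.'' That is exactly right: this is where the real work is, and it is not done here. Cathelineau's actual argument goes through a careful analysis of the structure of $\beta(\kk)$ and is spread across several of his papers; it is not something that can be sketched in a paragraph.

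There is also a genuine error in your characteristic $p$ remark. You claim that $\sum_{c\in\mathbb{F}_p^\times} c\otimes c$ vanishes because ``second-slot multiplicativity reduces it to $1\otimes\prod c$.'' That reduction is wrong: the first slots are different, so you cannot simply collapse the sum into a single tensor. The vanishing is true, but requires a different computation (pair $c$ with $-c$ and use that $a\otimes(-1)=0$ in a $\kk$-vector space of odd characteristic, then use additivity in the first slot). And even after fixing this, the assertion that $\ker(f)$ is \emph{exactly} the line spanned by $h(1)$ is again the hard part, and you have not addressed it beyond saying ``the same separating-derivations argument, refined.''
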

Here $\kk_0=\mathbf{F}_p$ is the prime subfield of $\kk$. 
The composition 
\begin{equation}
    f \circ R^{-1} \ : \ J(\kk)\lra \kk^+\otimes \kk^{\times}
\end{equation}
is given by 
\begin{equation}
    \brak{a,b} \longmapsto a\otimes a + b\otimes b-(a+b)\otimes (a+b)  . 
\end{equation}

\begin{remark}
Consider the cross-product group 
\begin{equation} \label{eq_affine_k}
\Aff_1(\kk) = 
\left\{ 
\begin{pmatrix}
c & a \\ 0 & 1
\end{pmatrix} : c\in \kk^{\times}, a\in \kk 
\right\}
\end{equation} 
of affine transformations of $\kk$ that send $x\mapsto cx+a$. Write its elements as $(a,c)$. This group fits into the short exact sequence of groups 
\begin{equation}
    0 \lra \kk \lra \Aff_1(\kk) \lra \kk^{\times}\lra 0 . 
\end{equation}
Denote by $\kk_r$ the \emph{left} $\Aff_1$-module which is $\kk$ with the action $(a,c).u=c^{-1}u$. Cathelineau~\cite[Theorem 6.1]{Cath11} establishes a canonical isomorphism 
\begin{equation}
    \Theta: \mathrm{H}_2(\Aff_1(\kk),\kk_r) \stackrel{\cong}{\lra} \beta(\kk),
\end{equation}
and tributes Chi-Han Sah with observing the isomorphism for infinite $\kk$. The isomorphism is induced by the map that takes a 2-chain $b[ (a,c)| (a',c')]$ in the non-homogeneous complex for the homology of $\Aff_1(\kk)$ to $b\langle c a',a\rangle \in J(\kk)$ and then to the corresponding element of $\beta(\kk)$ by composing with $R^{-1}$ in \eqref{eq_isoR}, see~\cite{Cath11} for details. 
\end{remark}

Summarizing, there are natural isomorphisms of $\kk$-vector spaces 
\begin{equation}\label{eq_iso_dilog}
    \beta(\kk) \ \cong \ J(\kk) \ \cong \ \mathrm{H}_2(\Aff_1(\kk),\kk_r) .
\end{equation}

Next, form $\kk$-vector space duals of the spaces and exact sequences \eqref{eq_ses_1} and \eqref{eq_ses_2}, in characteristic $0$ and $p$, respectively. The first sequence dualizes to the exact sequence (when $\mathsf{char}(\kk)=0$) 
\[
0 \lra \mathrm{Der}(\kk) \stackrel{g^{\ast}}{\lra} \mathrm{Log}(\kk)\stackrel{f^{\ast}}{\lra} \mathrm{Dil}(\kk)\lra 0, 
\]
and to the exact sequence 
\[
0 \lra \mathrm{Der}(\kk) \stackrel{g^{\ast}}{\lra} \mathrm{Log}(\kk)\stackrel{f^{\ast}}{\lra} \mathrm{Dil}(\kk)\stackrel{h^{\ast}}{\lra} \kk^{\ast}\lra 0 
\]
(when $\mathsf{char}(\kk)=p>0$). Here $\mathrm{Der}(\kk)$ is the vector space of derivations, that is, 
additive maps $d:\kk\lra \kk$ with $d(xy)=x \: d(y)+y \: d(x)$. The next term,  $\mathrm{Log}(\kk)$, the dual of $\kk\otimes \kk^{\times},$ can be identified with the space of group homomorphisms $\rho:\kk^{\times}\lra \kk^+$. 

The $\kk$-vector  space $\mathrm{Dil}(\kk):=J(\kk)^{\ast}$ is called the space of \emph{infinitesimal dilogarithms}.  

\vspace{0.1in}

Cathelineau~\cite{Cath11} establishes isomorphisms 
\begin{equation}\label{eq_iso_dilog2}
    \mathrm{Dil}(\kk) \ \cong \ \mathrm{Ext}^1_{\kk^{\times}}(\kk,\kk) \ \cong \ \mathrm{H}^2(\Aff_1(\kk),\kk_{\ell}).  
\end{equation}
Note that $\kk$ is naturally a $\Z[\kk^{\times}]$-module, and the corresponding ext group is considered in the isomorphism above. 
Left $\Aff_1(\kk)$-module 
$\kk_{\ell}\cong \kk$ has the action $(a,c) . u = cu$. 
 Isomorphism between the first and the last terms in \eqref{eq_iso_dilog2} is dual to the second isomorphism in \eqref{eq_iso_dilog}. 

\input{fig_entropy_001}

In 
Figure~\ref{fig_entropy_001} we write down various maps involved in the Cathelineau construction, for ours and the reader's convenience. 

\begin{remark} (Cathelineau~\cite{Cath11}) 
    If $\kk$ if a perfect field of finite characteristic $p$ then $\mathrm{Dil}(\kk)\cong \kk$ is one-dimensional, with a  generator 
    coming from the equivariant 2-cocycle $\chi^{\frac{1}{p}}$, where 
    \[
    \chi: \kk\times \kk \lra \kk, \ \ 
    \chi(x,y)=\sum_{i=1}^{p-1} \frac{1}{p}\binom{p}{i} x^i y^{p-i}. 
    \]     
    Cocycle $\chi$ also describes the addition on the space $W_2(\kk)$ of length 2 Witt vectors~\cite[Section 4.4]{Cath11}. The infinitesimal dilogarithm $\theta_p$ represented by the cocycle is given by 
    \[
    \theta_p(x)=\sum_{i=1}^{p-1} \frac{1}{p}\binom{p}{i} x^{i/p}(1-x)^{(p-i)/p}
    \]
    for $p\not=2$, $\theta_2(x)=1+x^{1/2}$, and it restricts to the Kontsevich $1\frac{1}{2}$ logarithm~\cite{Kont02,Lein21} on the prime subfield of $\kk$. 
    
     Cathelineau~\cite[Corollary 4.9]{Cath11} provides examples of fields where $J(\kk)$ is  infinite dimensional over $\kk$, including $\overline{\Q}$ and $F(X)$ for any field $F$.

  Cocycle $\chi$ above is closely related to the successful attack by Satoh--Araki, Smart, and Semaev~\cite{SA98,Smart99,Semaev98,Voloch} on the discrete logarithm problem on anomalous elliptic curves over $\mathbb{F}_p$ (i.e., curves with exactly $p$ points), see~\cite[Section 4]{GMP04}. 
\end{remark}

\begin{remark}
The entropy function $H$ is a rather special element of the $\R$-vector space 
\[\mathrm{Dil}(\R)=\Hom_{\R}(J(\R),\R) 
\]
given by 
\begin{equation}\label{eq_map_H}
H(\brak{a,b}) = \brak{a,b}_H. 
\end{equation}
Elements of  $\mathrm{Dil}(\R)$ not proportional to $H$ do not correspond to measurable functions. 
\end{remark} 

\begin{remark}
    A surprising connection between the Dehn invariant of polyhedra in $\R^3$ and the infinitesimal dilogarithm and entropy is developed in~\cite[Section 1.4]{Cath88}. The Dehn invariant $\mathcal{D}$ of a polyhedron in $\R^3$ is the first map in the scissor congruence short 
    exact sequence  
    \[
    0 \lra \mathcal{P}/\mathcal{L}\stackrel{\mathcal{D}}{\lra}\mathbb{U}\otimes_{\Z}i\R\lra \Omega_{\R}^1\lra 0 ,
    \]
    where $\mathcal{P}/\mathcal{L}$ is the $\R$-vector space of polyhedra in $\R^3$ modulo scissor congruence and prisms, see~\cite{Cath88,Syd65,Jess68,Schw13,Dup01,Sah79,Cale22_scissors}. 
    Cathelineau~\cite{Cath88} constructs a natural homomorphism from this short exact sequence into 
    the one in \eqref{eq_ses_1} for $\kk=\C$. Homomorphism of fields $\R\lra \C$ induces a 
    corresponding map of short exact sequences \eqref{eq_ses_1} from $\kk=\R$ to $\kk=\C$. This results in a diagram of three short exact sequences and maps between them, also see \cite{Cath88}. 
\[
\xymatrix@-1pc{
0\ar[rr] & &  \mathcal{P}/\mathcal{L} \ar[rr]^{\mathcal{D}} \ar[dd] & & 
i\R\otimes_{\Z}\mathbb{U}\ar[rr] \ar[dd] & & \Omega_{\R}^1\ar[rr] \ar[dd] & &  0 \\
& &  & &   & & \\ 
0 \ar[rr] & & \beta(\C) \ar[rr]^{f_{\C}} & & \C^+\otimes \C^{\times}\ar[rr]^{g_{\C}} & &  \Omega_{\C}^1 \ar[rr] & & 0  \\ 
& &  & &   & & \\ 
0 \ar[rr] & &  \beta(\R) \ar[rr]^{f_{\R}} \ar[uu] & &  \R^+\otimes \R^{\times} \ar[rr]^{g_{\R}} \ar[uu] & &   \Omega_{\R}^1 \ar[rr] \ar[uu] & &  0. 
}  
\]
    The entropy relates to the bottom left term in this diagram, being a function $H:\beta(\R)\lra \R$. 
    The analogue of symbols $\brak{a,b}$ in the scissor congruence group $\mathcal{P}/\mathcal{L}$ are special tetrahedra $T(a,b)$, for $0<a,b<1$. These are Euclidean tetrahedra $ABCD$ such that the edges $AB,BC,CD$ are pairwise orthogonal, $AB=\cot \alpha$, $BC=\cot \alpha\,\cot\beta$, $CD=\cot \beta$, where $0<\alpha,\beta<\frac{\pi}{2}$ are angles such that $\sin^2\alpha=a,\sin^2\beta=b$. One then has the 2-cocycle relation
    \[
    T(a,b)+T(ab,c) = T(b,c)+T(a,bc)
    \]
    in the scissor congruence group $\mathcal{P}/\mathcal{L}$, see~\cite{Jess68,Schw13}. The scaling law $T(ca,cb)=cT(a,b)$ and the commutativity condition $T(a,b)=T(b,a)$ hold as well (compare with the relations \eqref{eq_brak_one}-\eqref{eq_brak_three}), which allows to interpret the scissor congruence group cohomologically. We refer to~\cite[Section 1.4]{Cath88} for more details about this relation and to~\cite{Jess68,Schw13,Cale22_scissors} and the other references above for the background on the Dehn invariant. 

    In the above construction, parameters $a,b$ belong to a semigroup rather than a group. Extension of 2-cocycles from certain abelian semigroups to groups is studied in~\cite{JKT68, Ebanks79}. A similar extension of a 2-cocycle from $(\R_{>0},+)$ to $(\R,+)$ appears in \cite{IK24_SAF}, where the 2-cocycle is antisymmetric and comes from the cobordism group of unoriented weighted 1-foams embedded in $\R^2$. 

    D.~Calegari points out in~\cite{Cale22} that $-a\log a$ can be interpreted as the hyperbolic area of a suitable trapezoid in the upper half-plane, gives a scissor congruence proof of the entropy functional equation \eqref{eq_H}, and interprets his construction via a symplectic action of $\Aff^+_1(\R)\times \R$ on $T^{\ast}\R$.  

    We refer the reader to the end of Section 7 in~\cite{Sah79} for analogies pointed out to Sah by Chern between the Dehn invariant and structures in geometric probability, including integrals of mean curvature and quermassintegrale, in particular see formula (13.58) in~\cite{Santalo}. These formulas may also have been a motivation for Dehn to introduce his invariant. 
\end{remark}

%
%

\section{Diagrammatics of infinitesimal dilogarithms}
\label{sec_entropy_diagramm}

In this section we develop diagrammatics for the cohomological interpretation of infinitesimal dilogarithm, with an eye towards a similar interpretation of entropy in Section~\ref{subsec_diag_ent}. Following Cathelineau, we work over any field $\kk$ and use his universal 2-cocycle generated by symbols $\brak{a,b}\in J(\kk)$. Later on, we specialize the diagrammatics to $\kk=\R$ and the entropy 2-cocycle.  


\subsection{Motivation for the diagrammatics}
The 2-cocycle $\langle a,b\rangle$ with $a,b\in \kk$ can be represented diagrammatically as discussed in Section~\ref{subsec_two_cocycle}.
When lines $a,b\in \kk$ merge into the $(a+b)$-line, the vertex contributes $\langle a,b\rangle$ to the evaluation, see Figure~\ref{fig5_001} top left. 
It is natural to allow the opposite vertex, splitting $(a+b)$-line into $a,b$-lines. 
This vertex should then contribute $-\langle a,b\rangle$ to the evaluation, see Figure~\ref{fig5_001} top middle. 
As usual, the 2-cocycle property in \eqref{eq_ent_one} corresponds to relation in Figure~\ref{fig5_002} top left.  

\input{fig5_001}

\input{fig5_002}

\vspace{0.07in}

The symmetry property \eqref{eq_ent_two} indicates that we should allow the lines to intersect, with the intersection point contributing $0$ to the evaluation, but adding a skein relation in Figure~\ref{fig5_002} top right, i.e., $\langle b,a\rangle = \langle a, b\rangle$. The intersection can also be defined as a composition of a split and a merge, as shown in Figure~\ref{fig5_103} in top left. 
We refer to these lines as \emph{additive lines} and to split and merge vertices as \emph{additive vertices}, and in this section use oriented rather than co-oriented additive lines, for convenience. 

\vspace{0.07in}

In the previous section, we discussed the scaling property \eqref{eq_brak_two}  of the 2-cocycle $\brak{a,b}$ and passed to the affine group $\Aff_1(\kk)$ in \eqref{eq_affine_k}, which is a cross-product of $\kk^{\times}$ and $\kk$. Diagrammatics of cross-product groups was introduced in Section~\ref{subsec_cross_prod}. We follow that diagrammatics as a guide and introduce lines of the second kind  -- \emph{multiplicative} lines, labelled by elements of $\kk^{\times}$. These are shown as red wavy lines in the diagrams, labelled by elements $c\in \kk^{\times}$ and co-oriented. 
At an intersection point of an additive and a multiplicative line, the weight of the additive line is scaled by the weight of the multiplicative line, see the bottom left diagram in Figure~\ref{fig5_001}. The scaling happens in the direction of the co-orientation of the multiplicative line. 

We allow multiplicative lines to merge and split, while respecting their co-orientation and multiplying the labels when two lines merge or a line splits into two lines, see the bottom middle Figure~\ref{fig5_001}.

 Thus, we consider planar networks of diagrams where lines have two types: 
\begin{itemize}
    \item \emph{Additive lines} that are oriented and carry elements of $\kk$.  Two such lines labelled $a,b$ can merge into a line $a+b$, preserving orientation. Additive lines can intersect. They can also merge or split, preserving orientation, with their weights added.  
    \item \emph{Multiplicative lines}, shown as wavy red lines, that are co-oriented and carry an element of $\kk^{\times}$. Multiplicative lines can merge or split, respecting co-orientations, with the weights multiplied in a merge (and likewise for a split). 
    \item Arbitrary isotopies of diagrams are allowed.  
\end{itemize}

Co-orientation reversal nodes are allowed on multiplicative lines, see Figure~\ref{fig5_001} bottom right. At such a node label $c$ is reversed to $c^{-1}$. A merge of two multiplicative lines with the co-orientations opposite to that in Figure~\ref{fig5_001} is allowed as well. Splits of a red line into two are given by rotating the corresponding merge diagram in Figure~\ref{fig5_001} (middle of bottom row) by $180^{\circ}$.

\input{fig5_012}

\vspace{0.07in} 

Multiplicative (red) $1$-line can be erased, see Figure ~\ref{fig5_022}. Points where it splits into $c$ and $c^{-1}$ multiplicative lines become co-orientation reversal nodes, as in Figure~\ref{fig5_001} bottom right. 

\input{fig5_022}

\vspace{0.07in} 

Additive vertex in Figure~\ref{fig5_001}  top left that contributes $\langle a,b\rangle$ represents the addition operation in $\kk$. It extends to the group operation in the cross-product as shown in the multiplication diagram in   
Figure~\ref{fig5_012} for $\Aff_1(\kk)$ or in Figure~\ref{fig_K2} on the right (for general cross-products). In Figure~\ref{fig5_012} the bottom endpoints alternate between additive and multiplicative, and they are $a_1,c_1,a_2,c_2$ in that order. In the diagram additive $a_2$-line crosses multiplicative $c_1$-line and gets rescaled to additive $c_1a_2$-line. After that the two additive lines become adjoint and can be merged into the additive line labelled $a_1+c_1a_2$. Likewise, multiplicative lines become adjoint and can be moved into the multiplicative line $c_1c_2$. At the top boundary of the diagram we see an additive endpoint $a_1+c_1a_2$ followed by the multiplicative endpoint $c_1c_2$. Note that all three additive endpoints at the boundary of the diagram are upward oriented and all three multiplicative endpoints at the boundary are left-cooriented. The diagram matches the multiplication rule \eqref{eq_aff_mult} in the affine group $\Aff_1(\kk)$. 


\subsection{Monoidal categories for the infinitesimal dilogarithm}
\label{subsec_mon_for_dilog}

We would like to define two pivotal monoidal categories $\Catone$, $\Cattwo$ that are based on the above diagrammatical calculi. 
Generating objects in both categories correspond to possible endpoints in the diagrams. The objects are $X^+_a,X^-_a$, $a\in \kk$ and $Y^+_c,Y^-_c$, $c\in \kk^{\times}$. 
An object $Z$ of $\Catone$ or $\Cattwo$ is a finite tensor product of objects of these four types over various $a$'s and $c$'s. The empty product is the unit object $\one$. Generating 
 morphisms in $\Catone$ include the morphisms in  Figure~\ref{fig5_001} and pivotal structure morphisms (\emph{cups} and \emph{caps}) in Figure~\ref{fig4_020}. 

\input{fig4_020}

 In more detail, the generating morphisms in $\Catone$ are the following: 
\begin{itemize}
    \item Morphisms for the pivotal structure, with $X^-_a$ the dual of $X^+_a$ and $Y^-_c$ the dual of $Y^+_c$, see Figure~\ref{fig4_020}.
    \item Merge and split morphisms $\rho_m: X^+_a\otimes X^+_b\lra X^+_{a+b}$ and $\rho_s:X^+_{a+b} \lra X^+_a\otimes X^+_b$ 
    for additive lines shown in the top row (left and middle) of Figure~\ref{fig5_001}. 
    Dual merge and split morphisms $X^-_b\otimes X^-_a\lra X^-_{a+b}$ and $X^-_{a+b} \lra X^-_b\otimes X^-_a$ 
     can be obtained by rotating diagrams for $\rho_s$ and $\rho_m$ by $180^{\circ}$. 
    \item Permutation morphism $X^+_a\otimes X^+_b\lra X^+_b\otimes X^+_a$ in Figure~\ref{fig5_001} top right and its variations given by reversing one or both orientations, corresponding to intersections of additive lines. (We do not introduce intersections of multiplicative lines, although it is possible to do that due to commutativity of $\kk^{\times}$.) 
    \item Intersection morphism $Y^+_c\otimes X_a^+\lra X_{ac}^+\otimes Y^+_c$ shown in Figure~\ref{fig5_001} bottom right and its variations given by reversing the orientation of the additive line and reversing the co-orientation of the multiplicative line.
    \item Merge and split morphisms for the multiplicative lines co-oriented to the left 
    $Y^+_{c_1}\otimes Y^+_{c_2}\lra Y^+_{c_1c_2}$, $Y^+_{c_1c_2}\lra Y^+_{c_1}\otimes Y^+_{c_2}$ and co-oriented to the right 
    $Y^-_{c_1}\otimes Y^-_{c_2}\lra Y^-_{c_1c_2}$, $Y^-_{c_1c_2}\lra Y^-_{c_1}\otimes Y^-_{c_2}$. 
    \item Co-orientation reversal morphisms $Y^+_c\lra Y^-_{c^{-1}}$ and $Y^-_c\lra Y^+_{c^{-1}}$ in  Figure~\ref{fig5_001} bottom right. 
\end{itemize}

In the category $\Cattwo$, we also include generating morphisms $d(a)\in \End(\one)$, for $a\in \kk$, represented by a dot labelled $a$. 

Categories $\Catone$ and $\Cattwo$ are pivotal categories, with $X_a^-$ the two-sided dual of $X_a^+$ and $Y_c^-$ the two-sided dual of $Y_c^+$. The pivotal structure comes from allowing arbitrary isotopies of diagrams, which does not change the underlying morphism. 
We first define $\Catthree$, respectively, $\Catfour$,   to be the free pivotal monoidal category with the above generating morphisms, without, respectively with, morphisms $d(a)$, $a\in \kk$. These categories are an intermediate step in our construction. 
An example of a morphism in $\Catthree$ is shown in Figure~\ref{fig4_018}. A morphism in $\Catfour$ may additionally contain dots floating in the regions of the diagram and labelled by elements of $\kk$. 

\input{fig4_018}

\input{fig5_006}

\input{fig4_022}

We then define $\Catone$ and $\Cattwo$ as suitable monoidal quotients of $\Catthree$ and $\Catfour$, respectively. 
One possible definition is to pick a full set of defining relations on our networks and impose these relations. 
Some of the natural relations on the networks are shown in Figure~\ref{fig5_002}. One can add to these the permutation relations on additive crossings, relations that involve co-orientation reversal on multiplicative lines (the latteris shown in Figure~\ref{fig5_001} bottom right). Furthermore, it is natural to add relations for the cross-product diagrammatics shown in Figure~\ref{fig_K1} and specialized to $\Aff_1(\kk)$, so that $\sigma,\tau$ in the latter figure are elements of $\kk^{\times}$ scaling $a,b\in \kk$. For a complete match, one should convert co-orientations on additive lines in Figure~\ref{fig_K1} to orientations in a uniform way. 

Given a diagram for a morphism in $\mcC_{\kk}$, the multiplicative network in the diagram can be moved freely relative to the additive network, rel boundary (preserving all boundary points of the diagram), and taking into account changes in additive weights as additive lines cross multiplicative lines. Figures~\ref{fig5_006} and~\ref{fig4_022} show examples of such moves. 

In the category $\Cattwo$, where dots are present, we should add analogues of the relations in Figure~\ref{fig_F10a}: (1) dots labelled $x,y$ and floating in the same region can be merged into $x+y$ dots; (2) moving an $x$-dot through a red line $c$ co-oriented away from the dot scales the dot by $c$ to an $cx$-dot.

\vspace{0.07in} 

To avoid listing a full defining set of relations we instead define morphisms in these categories using a suitable evaluation function and the universal construction. 
Consider a planar diagram, representing a morphism in $\Catone$ from a tensor product $Z$ to $Z'$ and built out of the above generators, see Figure~\ref{fig4_019} for a representative example. 

\input{fig4_019}

In Section~\ref{subsec_entropy} we defined the $\kk$-vector space $J(\kk)$ associated to the field $\kk$. 
To a morphism $\gamma$ in $\Catone$ from $Z$ to $Z'$ assign an element $\jmath(\gamma)\in J(\kk)$ as follows. First, each point $p$ of the diagram of $\gamma$ not on a multiplicative line has the $\kk^{\times}$-winding number  $\omega(p,\gamma)\in \kk^{\times}$ relative to the leftmost region of $\gamma$. It is defined just as in Section~\ref{subsec_monoidal}, also see Figure~\ref{fig3_020a}. To compute it, draw a generic path from $p$ to the leftmost region of the diagram and multiply labels $c$ of wavy (red) lines that the path crossed, using the inverse $c^{-1}$ if the co-orientation of the wavy line at the intersection points towards $p$. 

The element $\jmath(\gamma)$ is a sum over all merges and splits of additive lines. Denote by $\mathrm{add}(\gamma)$ the set of such merges and splits. For $p\in \mathrm{add}(\gamma)$, define its sign $s(p)=1$ if $p$ is a merge and $s(p)=-1$ if $p$ is a split. Denote by $a_p$ and $b_p$ the labels $a$ and $b$ of the two lines that merge or split, following the order in Figure~\ref{fig5_001} top left and middle diagrams.

Define 
\begin{equation}\label{eq_func_j}
\jmath(\gamma) = \sum_{p\in \mathrm{add}(\gamma)} s(p) \omega(p,\gamma)\langle a_p,b_p\rangle \ \in \ J(\kk).
\end{equation}
As an example, consider the morphism $\gamma$ in Figure~\ref{fig4_019}, with three additive vertices, which we label $p_1,p_2,p_3$ from left to right. The leftmost point $p_1$ is a split, so $s(p_1)=-1$, and has $\kk^{\times}$-winding number $1$, since it is already in the leftmost region of the  wavy red network. It contributes $-\langle a_1,a_2\rangle$ to $\jmath(\gamma)$. Point $p_2$ in the middle is a merge, $s(p_2)=1$, the winding number $\omega(p_2,\gamma)=c_1$, and it contributes $c_1\langle c_1^{-1}a_2,a_3\rangle=\langle a_2,c_1a_3\rangle$ to $\jmath(\gamma)$. Finally, the rightmost point $p_3$ has $s(p_3)=-1$ and $\omega(p_3,\gamma)=c_1c_2$. The invariant $\jmath(\gamma)$ is written down in Figure~\ref{fig4_019}. 

Define $\Catone$ to be the quotient of $\Catthree$ by the relation: two morphisms $\gamma_1,\gamma_2: Z\lra Z'$ in $\Catthree$ are equal as morphisms in $\Catone$ if and only if $\jmath(\gamma_1)=\jmath(\gamma_2)$. To be precise, we first define the hom set $\Hom_{\Catone}(Z,Z'):= \Hom_{\Catthree}(Z,Z')/\sim$ as the quotient of the corresponding hom set in $\Catthree$ by this equivalence relation and later (Proposition~\ref{prop_composable}) check that composition in $\Catone$ is well-defined. 

\input{fig5_103}

\input{fig5_105}

\input{fig4_021}

\begin{prop} Relations in Figures~\ref{fig5_002},~\ref{fig5_103},~\ref{fig5_105}, and~\ref{fig4_021} hold in $\Catone$. Modifications of these relations given by reversing orientations of additive lines or the co-orientation of the multiplicative line (simultaneously with replacing $c$ by $c^{-1}$, bottom row in Figure~\ref{fig5_002}) hold as well. 
\end{prop}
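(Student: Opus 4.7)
The plan is to verify each listed relation by computing the invariant $\jmath$ on both sides and checking that the two values coincide in $J(\kk)$; equality in $\Catone$ is then immediate from the definition of the hom sets as quotients by the equivalence $\gamma_1 \sim \gamma_2 \iff \jmath(\gamma_1) = \jmath(\gamma_2)$. Since only additive merges and splits contribute to $\jmath$, each relation reduces to a finite identity between signed, $\kk^{\times}$-winding-number-weighted sums of symbols $\brak{a, b}$ in $J(\kk)$, and the three defining relations \eqref{eq_brak_one}--\eqref{eq_brak_three} will supply all the necessary identifications.

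I would dispose of the trivial cases first. The bottom-right relation of Figure~\ref{fig5_002}, the top-right pull-apart of Figure~\ref{fig5_103}, and both bottom relations in Figure~\ref{fig5_103} all involve diagrams with no additive vertices at all, so both sides evaluate to $0 \in J(\kk)$. The top-left crossing-by-definition in Figure~\ref{fig5_103} and the cancellation moves of Figure~\ref{fig4_021} each feature a merge-split pair whose contributions $\brak{a, b}$ and $-\brak{a, b}$ cancel; once more both sides give $0$.

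Next come the genuine cocycle computations. The associativity relation of Figure~\ref{fig5_002} top left is the statement $\brak{a, b} + \brak{a+b, c} = \brak{b, c} + \brak{a, b+c}$, which is exactly \eqref{eq_brak_three}. The commutativity relation top right yields $\brak{b, a}$ on the left, after noting that the intersection contributes $0$, and $\brak{a, b}$ on the right, so is precisely \eqref{eq_brak_one}. For the scaling relation bottom left, on the side where the merge lies above the multiplicative $c$-line the merging labels are $ca, cb$ and the merge point has $\kk^{\times}$-winding number $c^{-1}$, giving a contribution $c^{-1}\brak{ca, cb} = \brak{a, b}$ by \eqref{eq_brak_two}; on the other side the merge lies in the outer region below the $c$-line with winding number $1$ and labels $a, b$, also contributing $\brak{a, b}$. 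Finally, for the skein relation of Figure~\ref{fig5_105}, applying \eqref{eq_brak_three} with $(a, b, c) = (a_1, b - a_1, a_1 + a_2 - b)$ rewrites $\brak{a_1, a_2} - \brak{b, a_1+a_2-b}$ as $\brak{a_1, b-a_1} - \brak{b-a_1, a_1+a_2-b}$, matching the leftmost and middle diagrams; the identification with the rightmost diagram follows analogously from \eqref{eq_brak_three} with $(a, b, c) = (b, a_1 - b, a_2)$.

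The main obstacle, if one can call it that, is careful bookkeeping of orientations and winding numbers for the modifications obtained by reversing orientations of additive lines or co-orientations of multiplicative lines: the sign $s(p)$ of a vertex, the $\kk^{\times}$-exponent in $\omega(p, \gamma)$, and the labels $a_p, b_p$ may each pick up inverses or sign changes simultaneously. In every case one checks that these adjustments conspire so that the resulting relation still reduces to one of \eqref{eq_brak_one}--\eqref{eq_brak_three}, possibly after also replacing $c$ by $c^{-1}$ as indicated in the statement. No deeper input is needed; the proof is a finite verification driven entirely by the universal identities of the symbol $\brak{\cdot,\cdot}$.
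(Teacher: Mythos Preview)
Your proof is correct and follows the same approach as the paper's: compute $\jmath$ on both sides of each relation and verify equality using the defining relations \eqref{eq_brak_one}--\eqref{eq_brak_three} of $J(\kk)$. In fact you are more thorough than the paper, which explicitly treats only the four relations in Figure~\ref{fig5_002} and leaves Figures~\ref{fig5_103}, \ref{fig5_105}, \ref{fig4_021} implicit; your explicit reduction of Figure~\ref{fig5_105} to two instances of \eqref{eq_brak_three} is a nice addition. One minor imprecision: in Figure~\ref{fig5_103} top left the merge--split pair contributes $\brak{b,a}-\brak{a,b}$ rather than $\brak{a,b}-\brak{a,b}$, so the cancellation there uses \eqref{eq_brak_one} rather than being literal, but this does not affect your argument.
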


\begin{proof} Top left relation is the 2-cocycle equation \eqref{eq_brak_three}. Orientation reversal corresponds to adding minus sign to all terms of the equation, thus the corresponding diagrammatic relation holds as well.  Top right relation is the commutativity \eqref{eq_brak_one}. Bottom left relation is the equation~\eqref{eq_brak_two}.  Bottom right relation holds since both diagrams contribute $0$ to $\jmath(\gamma)$. 
\end{proof}

\begin{prop}\label{prop_long_proof}
    Suppose that $\gamma:\one\lra \one$ is an endomorphism of the identity object in $\Catone$. Then $\jmath(\gamma)=0$. 
\end{prop}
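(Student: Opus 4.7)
The plan is to reduce the claim to a purely additive statement in $J(\kk)$ and then prove the latter by induction on the additive network, in the spirit of Proposition~\ref{prop_closed_diagram}.

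First, I would show that $\jmath(\gamma)$ is invariant under sliding a multiplicative strand past an additive vertex. Concretely, when a $c$-line crosses a merge vertex $p$, its winding number $\omega(p)$ picks up a factor of $c^{\pm 1}$, while simultaneously the pair $(a_p,b_p)$ is rescaled by $c^{\mp 1}$; by the scaling relation \eqref{eq_brak_two}, $c\langle c^{-1}a_p,c^{-1}b_p\rangle = \langle a_p,b_p\rangle$, so the summand $s(p)\omega(p)\langle a_p,b_p\rangle$ is unchanged. Since $\gamma\colon\one\lra\one$ is closed, the multiplicative network has no boundary; hence we can isotope it freely relative to the additive vertices and arrange that every additive vertex lies in the leftmost region of the multiplicative network, so that $\omega(p)=1$ for all $p\in\mathrm{add}(\gamma)$.

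At this point it suffices to prove that for any closed oriented trivalent planar network of additive lines (no boundary) with labels in $\kk$ satisfying the flow condition at each vertex, the sum
\[
\sum_{p\in\mathrm{add}(\gamma)} s(p)\,\langle a_p,b_p\rangle \;=\; 0 \quad \text{in } J(\kk).
\]
I would induct on the number of additive vertices. If there are none, the network is a disjoint union of oriented circles and the sum is empty. Otherwise, the plan is to locate a bigon region: a face whose two boundary arcs meet at one merge vertex and one split vertex with matching edge labels. The contributions of these two vertices to $\jmath$ are $+\langle a,b\rangle$ and $-\langle a,b\rangle$ (up to the commutativity relation \eqref{eq_brak_one}), and cancel. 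Excising the bigon and applying the cancellation relation of Figure~\ref{fig4_021} reduces the vertex count by two, completing the inductive step. If no bigon is present, I would perform finitely many associativity moves (Figure~\ref{fig3_011} and its variants): each such move modifies the local contribution to $\jmath$ by exactly the defining 2-cocycle relation \eqref{eq_ent_three}, which is zero in $J(\kk)$, and a suitable sequence of moves creates a bigon, as in the proof of Proposition~\ref{prop_closed_diagram}.

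The main obstacle is the combinatorial bookkeeping in the last step: one must verify that the inductive simplification terminates without obstruction even in the presence of additive crossings and of nontrivial topology of the ambient network, and one must be careful to reduce additive crossings (defined as compositions of a split and a merge via Figure~\ref{fig5_103}) so that the edge-flip argument applies uniformly. The handling of these cases is parallel to the group-network argument in Proposition~\ref{prop_closed_diagram}, with the substitutions $c(\sigma,\tau)\leftrightarrow \langle a,b\rangle$ and the additional (but harmless) multiplicative winding factors absorbed by Step~1.
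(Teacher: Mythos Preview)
Your first step (isotoping the multiplicative network away so that every additive vertex has winding number $1$) is correct and matches the paper's opening move exactly.  From there the two arguments diverge.  The paper does \emph{not} use region-reduction: it invokes a weak Alexander-theorem argument to realize the closed additive network as the closure $\widehat{\gamma_1}$ of a braid-like diagram, then replaces crossings by merge--split pairs, uses associativity to push all merges below all splits, and finally reduces to the configuration of parallel vertical lines joined by at most one weight-$0$ edge between consecutive strands, where $\jmath=0$ because $\langle a,0\rangle=0$.

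Your route via bigon-cancellation and region-reduction is the one the paper itself flags in the Remark immediately following the proof (``One should be able to prove the above proposition more directly, using a variation on Proposition~\ref{prop_closed_diagram}''), so it is a legitimate alternative.  The trade-off is exactly what you identify as the main obstacle: you must supply the combinatorial argument that associativity moves always produce a bigon (or more generally shrink some bounded face to one edge, as in Proposition~\ref{prop_closed_diagram}), keeping track of orientations and additive crossings.  This is doable but is real work you have not yet done; the paper's braid-closure argument sidesteps it by giving an explicit normal form.

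One small correction: the associativity move is governed by the 2-cocycle relation \eqref{eq_brak_three} (equivalently \eqref{eq_ent_one}), not \eqref{eq_ent_three}, which is the scaling relation.
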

\begin{proof} An endomorphism $\gamma$ is given by two overlapping networks of additive and multiplicative lines, with the empty boundary. Using relations discussed above we can isotop the additive network away from the multiplicative network (possibly changing weights on additive lines in the process). After the isotopy additive and multiplicative networks  become disjoint. The multiplicative network evaluates to $0$. An additive network $\gamma_0$ evaluates to the signed sum  $\jmath(\gamma_0)=\sum_{p\in \mathrm{add}(\gamma_0)} s(p) \langle a_p,b_p\rangle  \in  J(\kk),$ since $\kk^{\times}$-winding numbers of its vertices are all $1$. We claim that $\jmath(\gamma_0)=0$. This can shown by converting $\gamma_0$ to a network $\widehat{\gamma_1}$ which is the closure of a braid-like network $\gamma_1$, with all lines pointed up and no U-turns. An example of a braid-like additive network and its closure is shown in Figure~\ref{fig4_023}. In a braid-like (additive) network all lines are oriented upward so that no tangent line to the  network is horizontal, and the network is a composition of merges, splits and transpositions of lines.  

The Alexander theorem~\cite{Bir74} says that any oriented link in $\R^3$ is the closure of a braid. 
It is straightforward to derive a weak analogue of the Alexander theorem: for any closed additive network $\gamma_0$, there exists a braid-like network $\gamma_1$ such that $\jmath(\gamma_0)=\jmath(\widehat{\gamma_1})$. This can be proved by mimicking the proof of the Alexander theorem:
\begin{itemize}
    \item Convert $\gamma_0$ to a piecewise-linear (PL) form and choose a generic base point $p_0$ in the plane. Further deform $\gamma_0$ so that near each intersection, merge and split point orientations of intervals of $\gamma_0$ all point clockwise around $p_0$. 
    \item Any interval in $\gamma_0$ which goes clockwise around $p_0$ is left alone. An interval $I$ that goes counterclockwise around $p_0$ is modified into a PL curve by moving some portions of $I$ towards $p_0$ and some all the way around the diagram as schematically shown in Figure~\ref{fig4_024}. These moves do not change the invariant $j$.
\end{itemize} 

\input{fig4_024}
 
Network 
$\widehat{\gamma_1}$ is analogous to  a braid closure. We claim that  $\jmath(\widehat{\gamma_1})=0$ for any braid-like network $\gamma_1$. 
To see this, consider merges, splits, and transposition points of $\gamma_1$. Put them in general position so that their $y$-coordinates are all different. 
Using moves that do not change the $j$-invariant we can modify $\gamma_1$ to a braid-like network $\gamma_2$ with no transposition points by replacing the latter with merges followed by splits as in Figure~\ref{fig5_103} bottom left. Then, using isotopies and the move in Figure~\ref{fig5_105}, network $\gamma_2$ is modified to a braid-like network $\gamma_3$ which consists of merges and splits with all merges happening before the splits (all having smaller $y$-coordinates than those of the split points). The bottom and top boundaries of $\gamma_3$ are given by the same sequence $a_1,\ldots, a_k\in \kk$ of weights. 

Network $\gamma_3$ splits the $i$-line, of weight $a_i$, into a sequence of lines of weights $a_{i,1},\ldots, a_{i,\ell_i}$. Then these lines, over $i=1,\ldots, k$ and in this order are merged back into the same sequence $a_1,\ldots, a_k$. Using associativity moves (Figure~\ref{fig5_002} top left and its reflection) and cancellation moves in Figure~\ref{fig4_021} network $\gamma_3$ can be reduced to a network $\gamma_4$ which consists of vertical lines of weights $a_1, \ldots, a_k$, with two consecutive lines (of weights $a_i,a_{i+1}$) exchange at most one line of weight $0$, see Figure~\ref{fig4_025}. Then $\jmath(\widetilde{\gamma_4})=0$ since $\langle a,0\rangle=0$ for all $a\in \kk$. 

\input{fig4_025}

\input{fig4_023}

Another way to show that a braid-like network $\gamma$ has zero $\jmath$ invariant is to  merge all upward-oriented lines of $\widehat{\gamma}$ that intersect a given horizontal line into a single line and then split it back (this does not change the invariant).  
The network then starts and ends with a single line and it can be gradually simplified to the vertical line, without changing the invariant. A simple example is shown in  
Figure~\ref{fig5_104}.

One then gradually simplifies this modification of $\gamma_1$ without changing $\jmath$ until it is a vertical diagram, with $\jmath=0$. An example of such modification, reducing a  crossing squeezed between a full split and a full merge, is shown in Figure~\ref{fig5_104}. 
\input{fig5_104}
\end{proof}

\begin{remark} One should be able to prove the above proposition more directly, using a variation on Proposition~\ref{prop_closed_diagram}. Here we use two networks, one for the multiplicative and one for the additive subgroups of $\Aff_1(\kk)$, while Proposition~\ref{prop_closed_diagram} uses a single network for all elements of a group $G$. 
\end{remark}

\begin{corollary}
We have the following: 
\begin{enumerate}
\item  Given a morphism $\gamma:Z_0\lra Z_1$ in $\Catone$, the invariant $\jmath(\gamma)$ depends only on $Z_0$ and $Z_1$. 
\item $\jmath(\gamma)=0$ for any endomorphism $\gamma$ of an object $Z$.  
    \end{enumerate}
\end{corollary}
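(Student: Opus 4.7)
The plan is to reduce both parts to Proposition~\ref{prop_long_proof} via two constructions that preserve the $\jmath$-invariant: a closure that turns an endomorphism into an endomorphism of $\one$, and a horizontal reflection that pairs two morphisms sharing a boundary into an endomorphism.

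First I would establish Part (2). Given $\gamma \in \End_{\Catone}(Z)$ represented by a diagram in $\Catthree$, form the trace-like closure $\widehat\gamma : \one \to \one$ by pairing each top endpoint of $\gamma$ with the corresponding bottom endpoint via a cup or cap from Figure~\ref{fig4_020} routed around the right side. Since cups and caps contain no additive merges or splits, $\mathrm{add}(\widehat\gamma) = \mathrm{add}(\gamma)$ and the signs $s(p)$ agree. For each interior vertex $p$, a path from $p$ to the leftmost region of $\widehat\gamma$ can be chosen to stay within $\gamma$ (all closing arcs lie to the right), giving $\omega(p, \widehat\gamma) = \omega(p, \gamma)$. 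Hence $\jmath(\widehat\gamma) = \jmath(\gamma)$, and Proposition~\ref{prop_long_proof} yields $\jmath(\gamma) = 0$.

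Then I would derive Part (1) from Part (2). Given $\gamma_1, \gamma_2 : Z_0 \to Z_1$, construct $\gamma_2^\vee : Z_1 \to Z_0$ by reflecting $\gamma_2$ across a horizontal axis and reinterpreting the flipped orientations via the pivotal identifications $(X_a^\pm)^* \cong X_a^\mp$ and $(Y_c^\pm)^* \cong Y_c^\mp$ (equivalently, by standard dualization in a pivotal monoidal category). Horizontal reflection exchanges merges and splits, flipping each $s(p)$, while preserving the left--right orientation of the plane and hence the leftmost region and all winding numbers $\omega(p, \cdot)$. Therefore $\jmath(\gamma_2^\vee) = -\jmath(\gamma_2)$. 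The composite $\gamma := \gamma_2^\vee \circ \gamma_1 : Z_0 \to Z_0$ is an endomorphism, and vertical stacking preserves winding numbers (the leftmost region of the stack is connected and contains the leftmost regions of both halves, so a path from any interior vertex to the leftmost region need never cross the horizontal interface). Additivity of $\jmath$ under stacking then yields $0 = \jmath(\gamma) = \jmath(\gamma_1) - \jmath(\gamma_2)$, so $\jmath(\gamma_1) = \jmath(\gamma_2)$.

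The main obstacle will be verifying that the winding numbers $\omega(p, \cdot)$ behave as claimed under each geometric operation---closure on the right, horizontal reflection, and vertical stacking. In each case the argument reduces to the same observation: the leftmost region is compatible with the operation, so paths from interior vertices to the leftmost region can be chosen to avoid the newly introduced multiplicative arcs or the horizontal interface between stacked pieces. Once this is checked, the remaining ingredients---the flipping of $s(p)$ under reflection, the reinterpretation of orientations via the pivotal structure, and additivity of $\jmath$ under disjoint unions of merges and splits---are routine.
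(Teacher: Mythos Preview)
Your argument for Part~(2) is correct and in fact cleaner than the paper's route: you close an endomorphism of $Z$ directly to an endomorphism of $\one$ via right-side trace arcs, observe that no additive vertices are added and no winding numbers change, and invoke Proposition~\ref{prop_long_proof}. The paper instead proves Part~(1) first (via a vertical reflection $\gamma^\ast:Z_0^\ast\to Z_1^\ast$ and a closure pairing $\gamma$ with $\gamma^\ast$, which introduces a $c(Z)$ twist), then derives Part~(2) from Part~(1) using $\jmath(\id_Z)=0$.

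There is, however, a genuine gap in your Part~(1). Horizontal reflection of $\gamma_2:Z_0\to Z_1$ does \emph{not} produce a morphism $Z_1\to Z_0$: it reverses all additive orientations and multiplicative co-orientations at the boundary, so the resulting diagram is a morphism $\overline{Z_1}\to\overline{Z_0}$, where $\overline{(\,\cdot\,)}$ flips every sign $X_a^\pm\mapsto X_a^\mp$, $Y_c^\pm\mapsto Y_c^\mp$. The pivotal identification $(X_a^\pm)^\ast\cong X_a^\mp$ tells you which object is the \emph{dual}, not that $X_a^+$ and $X_a^-$ are isomorphic; indeed $w(X_a^+)=(a,1)\neq(-a,1)=w(X_a^-)$ for $a\neq 0$, so no such isomorphism exists in $\Catone$. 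Consequently the composite $\gamma_2^\vee\circ\gamma_1$ does not type-check.

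The fix is easy and stays within your strategy: since $\gamma_1$ exists, $w(Z_0)=w(Z_1)$, so by Proposition~\ref{prop_morphism_exists} there is \emph{some} morphism $\delta:Z_1\to Z_0$ (for instance the composite of the standard-form isomorphisms in \eqref{eq_Z_morphism}). You need not know $\jmath(\delta)$ explicitly; your stacking-additivity argument plus Part~(2) gives $0=\jmath(\delta\circ\gamma_i)=\jmath(\delta)+\jmath(\gamma_i)$ for $i=1,2$, hence $\jmath(\gamma_1)=-\jmath(\delta)=\jmath(\gamma_2)$. Alternatively, this is exactly where the paper's vertical reflection $\gamma^\ast:Z_0^\ast\to Z_1^\ast$ earns its keep: placing $\gamma_1$ and $\gamma_2^\ast$ side by side and closing with pivotal caps and cups does type-check, at the cost of tracking the factor $\jmath(\gamma^\ast)=-c(Z)^{-1}\jmath(\gamma)$.
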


\begin{proof}
Consider two morphisms $\gamma,\gamma':Z_0\lra Z_1$. Reflect the morphism $\gamma$ about the $y$-axis and reverse the orientation of all additive lines. Denote the resulting morphism by $\gamma^{\ast}:Z_0^{\ast}\lra Z_1^{\ast}$ ($\ast$ operation on morphisms is part of the pivotal structure on $\Catone$). 
Note that $\jmath(\gamma^{\ast}) = -c(Z)^{-1}.\jmath(\gamma)$.

\input{fig4_026}

Compose morphisms $\gamma,\gamma'$ with $\gamma^{\ast}$ and the duality morphisms and  to get endomorphisms $\tau,\tau'$ of $\one$, see Figure~\ref{fig4_026}. We have 
\begin{eqnarray}
\jmath(\tau) & = & \jmath(\gamma)+ c(Z).\jmath(\gamma^{\ast})=
\jmath(\gamma)-c(Z).c(Z)^{-1}.\jmath(\gamma)=0,\\
\jmath(\tau') & = &
\jmath(\gamma')+ c(Z).\jmath(\gamma^{\ast})=
\jmath(\gamma')-\jmath(\gamma)=0.
\end{eqnarray}
The last equality holds since $\jmath$ is $0$ on any endomorphism of $\one$, including $\tau'$. Consequently, $\jmath(\gamma')=\jmath(\gamma)$ and part (1) of the corollary follows. 

Part (2) holds since $\jmath(\id_Z)=0$, where $\id_Z$ is the identity endomorphism of $Z$ (the diagram of $\id_Z$ consists of vertical parallel lines, which do not contribute to $\jmath$). Consequently, $\jmath(\gamma)=0$ for any endomorphism $\gamma$ of $Z$.
\end{proof}

An object $Z\in \Ob(\Catone)$ is a tensor product of objects $X^{\pm}_a$ and $Y^{\pm}_c$, over various $a\in \kk$ and $c\in \kk^{\times}.$ Define \emph{the weight} of $Z$, denoted $w(Z)$, to be the unique pair $(a(Z),c(Z))$, with $a(Z)\in \kk$ (\emph{additive weight}) and $c(Z)\in \kk^{\times}$ (\emph{multiplicative weight}) such that there exists a morphism 
\begin{equation}\label{eq_Z_morphism}
    Z\lra X_{a(Z)}^+\otimes Y_{c(Z)}^+
\end{equation}
in $\Catone$. 
To determine the weight of $Z$, first move all $X$-terms in the product for $Z$ to the left of all $Y$-terms using permutation morphisms 
\begin{equation}\label{eq_perm}
Y^{\pm}_c \otimes X^+_a \lra X^+_{c^{\pm 1}a}\otimes Y^{\pm}_c, \ \ 
Y^{\pm}_c \otimes X^-_a \lra X^+_{c^{\mp 1}a}\otimes Y^{\pm}_c 
\end{equation} 
given by the morphism in  Figure~\ref{fig5_001}, bottom left corner, and its modifications with reversed orientation and co-orientation. 
The new product 
\[
Z_0=X^{\epsilon_1}_{a_1}\otimes \cdots \otimes X^{\epsilon_n}_{a_n}\otimes Y^{\mu_1}_{c_1}\otimes \cdots \otimes  Y^{\mu_m}_{c_m}, \ \ 
\epsilon_i,\mu_j\in \{+,-\}. 
\]
Define 
\begin{equation}
a(Z)  = \sum_{i=1}^n \epsilon_i a_i\in \kk, \ \ c(Z) = \prod_{j=1}^m c_j^{\mu_j 1}\in \kk^{\times}, 
\end{equation} 
where in the formula $\mu 1$ stands for $1$ or $-1$ depending on whether $\mu=+$ or $\mu=-$, and likewise for $\epsilon a$. 
With this choice of parameters there is a morphism \eqref{eq_Z_morphism}. The weight $w(Z)=(a(Z),c(Z))$ of $Z$ is naturally an element of the affine group $\Aff_1(\kk)$. 

Weights of the tensor product $Z\otimes Z'$ are given by: 
\begin{equation}
a(Z\otimes Z')= a(Z) + c(Z) a(Z'), \ \ c(Z\otimes Z')=c(Z)c(Z'), 
\end{equation}
which we can also write as $w(Z\otimes Z')=w(Z)w(Z')$, where the product is just the multiplication in $\Aff_1(\kk)$. 

Weights of the source and target objects of any morphism $\gamma:Z\lra Z'$ in $\Catone$ are equal: $a(Z)=a(Z'),c(Z)=c(Z')$. 
 The following propositions are clear.

 \begin{prop}\label{prop_morphism_exists}
A morphism $\gamma:Z\lra Z'$ from $Z$ to $Z'$ in $\Catone$ exists if and only if the weights of $Z$ and $Z'$ are equal. 
\end{prop}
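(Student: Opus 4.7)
My plan is a two-direction argument, with the work concentrated on the ``if'' direction via a normal-form reduction.

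For the ``only if'' direction, I would first check that the weight $w(Z) = (a(Z), c(Z)) \in \Aff_1(\kk)$ is invariant under each generating morphism listed in Section~\ref{subsec_mon_for_dilog}. The merges and splits of additive lines $X^+_a \otimes X^+_b \to X^+_{a+b}$ preserve both $a(Z)$ and $c(Z)$ trivially; the multiplicative merges and splits do the same on the $c$-side; the co-orientation reversal $Y^+_c \to Y^-_{c^{-1}}$ and the additive orientation reversal encoded by cups/caps both leave $w$ invariant by the very definition (with the signs $\epsilon_i, \mu_j$); and the crossings \eqref{eq_perm} between additive and multiplicative lines are already rigged into the definition of $w$. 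Finally, for additive intersections $X^+_a \otimes X^+_b \to X^+_b \otimes X^+_a$, the pair $(a+b, 1)$ is unchanged. Since $w$ is multiplicative under $\otimes$ (matching multiplication in $\Aff_1(\kk)$) and preserved under vertical composition of generators, any morphism $\gamma: Z \to Z'$ forces $w(Z) = w(Z')$.

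For the ``if'' direction, the plan is to exhibit a canonical reduction morphism
\[
\rho_Z \ : \ Z \ \lra \ X^+_{a(Z)} \otimes Y^+_{c(Z)}
\]
for every object $Z$. The construction proceeds in three steps. First, use co-orientation reversal nodes and the $X^{\pm}_a$ cups/caps (Figure~\ref{fig4_020}) to replace each $X^-_{a_i}$ by $X^+_{-a_i}$ and each $Y^-_{c_j}$ by $Y^+_{c_j^{-1}}$ at the expense of changing the sign of the weight label, as already anticipated in the definition of $a(Z), c(Z)$. Second, use the additive-times-multiplicative crossings \eqref{eq_perm} to commute every remaining $Y^+$-factor past every $X^+$-factor to the right; each such commutation rescales an additive label by a power of the multiplicative label, and the bookkeeping exactly reproduces the formula $a(Z) = \sum \epsilon_i a_i$ with $c(Z) = \prod c_j^{\mu_j}$ acting appropriately. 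Third, iteratively apply additive merges to collapse the $X^+$-factors into a single $X^+_{a(Z)}$, and multiplicative merges to collapse the $Y^+$-factors into a single $Y^+_{c(Z)}$. Since $\Catone$ is pivotal, each step has a dual ``co-reduction'' $\rho_Z^\vee : X^+_{a(Z)} \otimes Y^+_{c(Z)} \to Z$ obtained by reflecting the diagram and reversing orientations. If $w(Z) = w(Z')$, then $X^+_{a(Z)} \otimes Y^+_{c(Z)} = X^+_{a(Z')} \otimes Y^+_{c(Z')}$ as objects, and the composition $\rho_{Z'}^\vee \circ \rho_Z : Z \to Z'$ is the desired morphism.

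The main obstacle is really the second step of the reduction: one must verify that no matter in which order the $Y$-factors are commuted past the $X$-factors, the resulting additive and multiplicative labels are precisely $a(Z)$ and $c(Z)$ as defined, and that the resulting morphism does not depend (up to existence) on the choice of reordering. This is a bookkeeping check matching the associativity of $\Aff_1(\kk)$-multiplication via the formula $w(Z \otimes Z') = w(Z) w(Z')$, so while tedious it is not deep. The remaining subtlety is that one should confirm that passing through the duality maps (cups/caps on $X$-lines, co-orientation reversals on $Y$-lines) in step one does not introduce an obstruction; this follows from the pivotal structure, which was already built into $\Catone$ by construction.
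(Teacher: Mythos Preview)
Your proposal is correct and follows essentially the same route the paper takes implicitly: the paper states this proposition as ``clear'' immediately after constructing, in the definition of weight itself, the reduction morphism $Z \to X^+_{a(Z)} \otimes Y^+_{c(Z)}$ via exactly the three steps you describe (commute $Y$-factors right, reverse signs, merge), and the reverse direction uses that merges/splits and crossings are mutually inverse. Your worry about step two is unnecessary here, since the proposition only asserts existence of some morphism, not uniqueness or independence of choices; that comes later in Corollary~\ref{cor_unique}.
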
 

\begin{prop} \label{prop_composable}
For composable morphisms, $\jmath(\gamma_2\gamma_1)=\jmath(\gamma_2)+\jmath(\gamma_1)$. For two morphisms $\gamma_i:Z_i\lra Z_i'$, $i=1,2$, the invariant 
$\jmath(\gamma_1\otimes \gamma_2)=\jmath(\gamma_1)+ c(Z_1)\jmath(\gamma_2).$
\end{prop}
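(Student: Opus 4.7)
The plan is to prove both identities directly from the definition \eqref{eq_func_j} of $\jmath$, by analyzing how the set of additive vertices and their $\kk^{\times}$-winding numbers transform under composition and tensor product.

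For the composition formula $\jmath(\gamma_2\gamma_1) = \jmath(\gamma_1) + \jmath(\gamma_2)$, first I would observe that $\mathrm{add}(\gamma_2\gamma_1) = \mathrm{add}(\gamma_1)\sqcup \mathrm{add}(\gamma_2)$ and that the sign $s(p)$ and the local labels $a_p, b_p$ attached to an additive vertex $p$ depend only on the germ of the diagram at $p$. So the only thing to check is that $\omega(p,\gamma_2\gamma_1) = \omega(p,\gamma_i)$ for $p\in \gamma_i$. For a vertex $p$ in, say, the bottom piece $\gamma_1$, one may compute $\omega(p,\gamma_2\gamma_1)$ using a ray from $p$ exiting the bottom half of the strip, which stays entirely inside $\gamma_1$; this reads off exactly $\omega(p,\gamma_1)$. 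For a vertex $p \in \gamma_2$ a similar argument using a ray exiting through the top of the strip gives $\omega(p,\gamma_2\gamma_1) = \omega(p,\gamma_2)$. Summing the contributions then gives the identity.

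For the tensor-product formula $\jmath(\gamma_1\otimes\gamma_2) = \jmath(\gamma_1) + c(Z_1)\jmath(\gamma_2)$, again the sets of additive vertices concatenate and only the winding numbers need attention. For $p \in \gamma_1$, a leftward ray from $p$ exits through the left edge of the strip of $\gamma_1\otimes\gamma_2$ without crossing any multiplicative line of $\gamma_2$, so $\omega(p,\gamma_1\otimes\gamma_2) = \omega(p,\gamma_1)$. For $p\in \gamma_2$ the same ray must first cross all multiplicative strands of $\gamma_2$ encountered inside $\gamma_2$, and then cross all multiplicative strands of $\gamma_1$. The first set of crossings contributes $\omega(p,\gamma_2)$, while the second contributes a factor independent of $p$ equal to the product of the $\kk^{\times}$-labels along a horizontal cross section of $\gamma_1$'s multiplicative network (with exponents $\pm 1$ determined by co-orientations). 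Because $\kk^\times$ is abelian and the multiplicative network of $\gamma_1$ satisfies the merge/split and co-orientation-reversal relations of Figure~\ref{fig5_001}, this product is invariant under isotoping the cross-section, and so equals its value at the bottom boundary of $\gamma_1$, which is exactly $c(Z_1)$. Hence $\omega(p,\gamma_1\otimes\gamma_2) = c(Z_1)\cdot \omega(p,\gamma_2)$ for $p\in \gamma_2$, and summing yields the claimed identity.

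The only nontrivial step is the invariance lemma: that the product of multiplicative labels along a horizontal cross section of $\gamma_1$'s multiplicative subnetwork equals $c(Z_1)$. This is essentially the well-definedness of the multiplicative weight established earlier (Proposition~\ref{prop_morphism_exists}) applied to the sub-diagram $\gamma_1$, but I would state it explicitly as a short sub-lemma and check it on the three local generators (merge/split, co-orientation reversal, and isotopy through an intersection with an additive line, where the multiplicative label is unchanged). Everything else is a bookkeeping exercise.
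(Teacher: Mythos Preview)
Your argument is correct and is exactly the direct verification from the definition \eqref{eq_func_j} that the paper has in mind; the paper itself simply declares the proposition ``clear'' and gives no proof. One small wording issue: when you speak of a ``ray from $p$ exiting the bottom half of the strip'' you should make explicit that you mean a path to the \emph{leftmost region} that happens to stay within the $\gamma_1$-strip (and similarly for $\gamma_2$), since $\omega(p,\gamma)$ is defined relative to the leftmost region, not relative to infinity through the top or bottom boundary. With that clarified, both parts go through exactly as you describe, and your sub-lemma on the invariance of the multiplicative cross-section product is precisely the statement that multiplicative weight is constant along any morphism, already implicit in the discussion before Proposition~\ref{prop_morphism_exists}.
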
 

\begin{corollary} Composition of morphisms in $\Catone$ is well-defined. Category $\Catone$ is pivotal monoidal. 
\end{corollary}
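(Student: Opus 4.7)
The plan is to reduce both claims—well-definedness of composition and the pivotal monoidal structure—to the additivity properties of the invariant $\jmath$ recorded in Proposition~\ref{prop_composable}, together with the fact that $\Catthree$ is, by construction, a free pivotal monoidal category.

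First I would settle well-definedness of composition. Recall that by definition the hom set $\Hom_{\Catone}(Z,Z')$ is the quotient of $\Hom_{\Catthree}(Z,Z')$ by the relation $\gamma_1\sim\gamma_2 \Longleftrightarrow \jmath(\gamma_1)=\jmath(\gamma_2)$, and by Proposition~\ref{prop_morphism_exists} this set is nonempty exactly when $w(Z)=w(Z')$. Suppose $\gamma_1,\gamma_1':Z\to Z'$ and $\gamma_2,\gamma_2':Z'\to Z''$ are composable morphisms in $\Catthree$ with $\jmath(\gamma_1)=\jmath(\gamma_1')$ and $\jmath(\gamma_2)=\jmath(\gamma_2')$. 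By Proposition~\ref{prop_composable},
\[
\jmath(\gamma_2\gamma_1)=\jmath(\gamma_2)+\jmath(\gamma_1)=\jmath(\gamma_2')+\jmath(\gamma_1')=\jmath(\gamma_2'\gamma_1'),
\]
so $\gamma_2\gamma_1\sim\gamma_2'\gamma_1'$ and composition descends to a well-defined operation on equivalence classes. The identity morphism $\id_Z$ of any object in $\Catthree$ projects to a two-sided unit because $\jmath(\id_Z\cdot\gamma)=\jmath(\gamma)$ (identity has no additive vertices), and associativity is inherited from $\Catthree$. This gives $\Catone$ the structure of a category.

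Next I would verify the monoidal structure. The tensor product of morphisms in $\Catthree$ descends to $\Catone$ by the same mechanism: if $\gamma_i\sim\gamma_i'$ for $i=1,2$ (with matching source and target objects so that the formula below makes sense), then by Proposition~\ref{prop_composable},
\[
\jmath(\gamma_1\otimes\gamma_2)=\jmath(\gamma_1)+c(Z_1)\jmath(\gamma_2)=\jmath(\gamma_1')+c(Z_1)\jmath(\gamma_2')=\jmath(\gamma_1'\otimes\gamma_2'),
\]
so $\gamma_1\otimes\gamma_2\sim\gamma_1'\otimes\gamma_2'$. The associator, left and right unitors, and their coherence relations are inherited from the free monoidal structure on $\Catthree$; no new checks are required because passing to a quotient by a congruence (one respecting $\otimes$ and $\circ$) automatically preserves commutative diagrams of generators.

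Finally, the pivotal structure. Since $\Catthree$ is constructed as the free pivotal monoidal category on the generators listed in Section~\ref{subsec_mon_for_dilog}, it comes equipped with dualities $(X^+_a)^\ast=X^-_a$, $(Y^+_c)^\ast=Y^-_c$ and the cup/cap morphisms of Figure~\ref{fig4_020}, satisfying the snake identities and the pivotal (double-dual trivialization) axiom. These morphisms and identities project to $\Catone$ by the argument of the previous paragraph. The only mild obstacle worth checking is that the duality data behaves correctly on hom sets, i.e.\ that the operation $\gamma\mapsto\gamma^\ast$ of Figure~\ref{fig4_026} is well-defined on equivalence classes; this follows from the identity $\jmath(\gamma^\ast)=-c(Z)^{-1}\jmath(\gamma)$ already observed in the proof of the previous corollary, which is linear in $\jmath(\gamma)$. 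The hardest ingredient in the whole package is really Proposition~\ref{prop_long_proof} (vanishing of $\jmath$ on endomorphisms of $\one$), but since that is available, each remaining step is a short bookkeeping argument.
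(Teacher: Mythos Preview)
Your proposal is correct and follows essentially the same approach as the paper: both reduce well-definedness of composition and the monoidal structure to the additivity formulas $\jmath(\gamma_2\gamma_1)=\jmath(\gamma_2)+\jmath(\gamma_1)$ and $\jmath(\gamma_1\otimes\gamma_2)=\jmath(\gamma_1)+c(Z_1)\jmath(\gamma_2)$ from Proposition~\ref{prop_composable}. Your version is somewhat more detailed, spelling out explicitly that the pivotal structure descends from $\Catthree$ and noting the compatibility of the $\ast$-operation via $\jmath(\gamma^\ast)=-c(Z)^{-1}\jmath(\gamma)$, whereas the paper leaves these points implicit.
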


\begin{proof}
    If $\jmath(\gamma_0)=\jmath(\gamma_0')$ for $\gamma_0,\gamma_0':Z_0\lra Z_1$ and $\jmath(\gamma_1)=\jmath(\gamma_1')$ for $\gamma_1,\gamma_1':Z_1\lra Z_2$, then $\jmath(\gamma_1\gamma_0)=\jmath(\gamma_1'\gamma_0')$ due to Proposition~\ref{prop_composable}. This implies that composition in $\Catone$ is well-defined and $\Catone$ is a category. 

    The monoidal structure in $\Catone$ is inherited from that of $\Catone$. That it is well-defined follows from the formula for the $\jmath$ invariant of the tensor product in Proposition~\ref{prop_composable}.
\end{proof}

\begin{corollary}\label{cor_unique}
    Any morphism $\gamma$ in $\Catone$ is invertible. For any two objects $Z,Z'$ of $\Catone$ there exist at most one morphism $\gamma: Z\lra Z'$ in $\Catone$. A morphism $\gamma:Z\lra Z'$ exists if and only if $Z$ and $Z'$ have equal weights: $w(Z)=w(Z')$. For any object $Z$ there exists an isomorphism 
    \begin{equation}
    Z\stackrel{\cong}{\lra} X^+_{a(Z)}\otimes Y^+_{c(Z)}.
    \end{equation}
\end{corollary}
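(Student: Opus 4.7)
The plan is to deduce all four claims from the two preceding propositions and the corollary immediately above stating that $\jmath(\gamma)$ depends only on the source and target of $\gamma$. No further diagrammatics or cocycle computations are needed; this corollary is essentially a bookkeeping consequence of what is already in place.

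First I would establish the uniqueness clause. Given parallel morphisms $\gamma_1,\gamma_2\colon Z\to Z'$ in $\Catone$, the previous corollary asserts $\jmath(\gamma_1)=\jmath(\gamma_2)$. But $\Catone$ was defined by identifying morphisms of $\Catthree$ with equal $\jmath$, so $\gamma_1=\gamma_2$ already in $\Catone$. The existence half, namely that a morphism $Z\to Z'$ exists if and only if $w(Z)=w(Z')$, is precisely Proposition~\ref{prop_morphism_exists}.

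Invertibility comes next. If $\gamma\colon Z\to Z'$ is any morphism in $\Catone$, then $w(Z)=w(Z')$, so Proposition~\ref{prop_morphism_exists} applied in the opposite direction produces a morphism $\gamma'\colon Z'\to Z$. The composite $\gamma'\gamma$ and the identity $\id_Z$ are both endomorphisms of $Z$, hence equal by the uniqueness established above; likewise $\gamma\gamma'=\id_{Z'}$. For the final claim, the object $X^+_{a(Z)}\otimes Y^+_{c(Z)}$ has weight $(a(Z),c(Z))=w(Z)$ by the formula for weights of a tensor product, so the existence statement yields a (unique) morphism $Z\to X^+_{a(Z)}\otimes Y^+_{c(Z)}$, which is automatically an isomorphism by the invertibility just proved.

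There is essentially no obstacle. The only conceptual point worth emphasising is that the $\jmath$-equivalence defining $\Catone$ collapses each nonempty parallel hom-set of $\Catthree$ to a single class, which is exactly what the preceding corollary guarantees by making $\jmath$ a function of $(Z,Z')$ alone.
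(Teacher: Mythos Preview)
Your proposal is correct and matches the paper's approach almost exactly. The only cosmetic difference is in the invertibility step: the paper constructs the inverse explicitly by reflecting the diagram of $\gamma$ in a horizontal line, whereas you obtain a reverse morphism abstractly from Proposition~\ref{prop_morphism_exists} and then invoke uniqueness to check it is a two-sided inverse; both arguments are equally short and rest on the same ingredients.
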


\begin{proof} Two morphisms $\gamma_0,\gamma_1: Z\lra Z'$ have the same $\jmath$ invariant: $\jmath(\gamma_0)=\jmath(\gamma_1)$ so they are equal. The inverse of $\gamma$ is given by reflecting $\gamma$ in a horizontal plane. The existence condition follows from that in Proposition~\ref{prop_morphism_exists} for the category $\Catone'$.  
\end{proof} 

Category $\mcC_{\kk}$ is small and morphisms in it are in a bijection with pairs of objects $(Z,Z')$ of equal weight. The $\jmath$ invariant of morphisms in $\mcC_{\kk}$ takes values in the $\kk$-vector space $J(\kk)$. It extends the entropy and the Cathelineau invariant to morphisms in  $\mcC_{\kk}$. 

To compute $\jmath(\gamma)$ for a morphism $\gamma:Z_0\lra Z_1$ (equivalently, for a pair $(Z_0,Z_1)$ with $a(Z_0)=a(Z_1)$ and $c(Z_0)=c(Z_1)$), factor $\gamma$ as 
\begin{equation}\label{eq_factor_Z}
Z_0 \stackrel{\gamma_0}{\lra} X^+_{a(Z_0)}\otimes Y^+_{c(Z_0)}\stackrel{\gamma_1}{\lra}  Z_1
\end{equation}
Morphism $\gamma_0$ can be represented by the diagram where all additive points of $Z_0$ cross to the left of all multiplicative points of $Z_0$. Downward-oriented additive lines then reverse orientation (and their weights). Right-co-oriented multiplicative lines reverse coorientation and their weights. Finally all additive lines merge into a single upward additive line that carries weight $a(Z_0)$ and all multiplicative lines merge into a single left cooriented line that carries weight $c(Z_0)$, see Figure~\ref{fig8_010}. Additive weights get rescaled upon crossing multiplicative lines.  

\vspace{0.07in}

\input{fig8_010}

Suppose that the additive weights are $b_1,\dots, b_n$, after the crossings and switches to upward orientation on additive strands are done. Then  the invariant 
\[
\jmath(\gamma_0)= \sum_{k=1}^{n-1}\langle\sum_{i=1}^k b_i,b_{k+1}\rangle. 
\]
A similar diagram and computation gives the formula 
\[
\jmath(\gamma_1) = - \sum_{k=1}^{m-1}\langle\sum_{i=1}^k b_i',b_{k+1}'\rangle,
\]
where $b_1',\dots, b_m'$ are the weights of $Z_1$ upon moving all additive points to the leftmost region. The diagram for $\gamma_1$ is given by reflecting a suitable diagram as in Figure~\ref{fig8_010} about the $x$-axis and reversing orientations of all additive lines. 
Finally, $\jmath(\gamma)=\jmath(\gamma_0)+\jmath(\gamma_1)$. 

\begin{prop}
    Pivotal monoidal category $\Catone$ is equivalent to the category $\mcC_G$ for $G=\Aff_1(\kk)$ as defined in Section~\ref{subsec_monoidal}. 
\end{prop}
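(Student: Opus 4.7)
The plan is to construct an explicit equivalence $F:\Catone \lra \mcC_{\Aff_1(\kk)}$ that is the identity on "atomic" data and then to exploit the fact that both categories are thin (at most one morphism between any two objects). Recall from Section~\ref{subsec_monoidal} that objects of $\mcC_G$ for $G = \Aff_1(\kk)$ are finite sequences of pairs (element of $G$, sign), that $X_{\undsigma} \cong X_{\brak{\undsigma}}$, and that $\Hom_{\mcC_G}(X_\sigma, X_\tau)$ is a one-element set if $\sigma=\tau$ and empty otherwise. By Corollary~\ref{cor_unique}, $\Catone$ has the same thinness property, with isomorphism classes parameterized by $\Aff_1(\kk)$ via the weight $w(Z) = (a(Z), c(Z))$.

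On generating objects, the plan is to send $X^{\epsilon}_a \longmapsto X^{\epsilon}_{(a,1)}$ and $Y^{\mu}_c \longmapsto X^{\mu}_{(0,c)}$, extending multiplicatively to tensor products. The first step is to verify that the weight assignment is compatible with the group operation in $\Aff_1(\kk)$. Using the multiplication \eqref{eq_aff_mult}, one checks by induction on the length of $Z$ that the $G$-element $\brak{F(Z)}$ obtained by multiplying the $(a_i, 1)^{\epsilon_i 1}$ and $(0, c_j)^{\mu_j 1}$ factors equals $(a(Z), c(Z)) = w(Z)$; the key calculation is the one already verified in the text, $(a,1)\cdot (0,c) = (a,c)$ and $(0,c)\cdot (a,1) = (ca, c)$, which matches the rescaling on additive lines crossing multiplicative lines in the permutation morphism \eqref{eq_perm}.

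On morphisms, define $F(\gamma)$ to be the unique morphism $F(Z_0) \to F(Z_1)$ in $\mcC_G$ when $\gamma: Z_0 \lra Z_1$ exists in $\Catone$. The well-definedness of $F$ reduces to verifying that whenever a morphism $Z_0 \to Z_1$ exists in $\Catone$, one also exists between $F(Z_0)$ and $F(Z_1)$ in $\mcC_G$, and vice versa. By Proposition~\ref{prop_morphism_exists} the former happens iff $w(Z_0) = w(Z_1)$, and in $\mcC_G$ the latter happens iff $\brak{F(Z_0)} = \brak{F(Z_1)}$. By the weight computation above these two conditions coincide, so $F$ is a well-defined functor, automatically faithful and full (since both hom sets are either empty or singletons in bijective correspondence). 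Functoriality (composition and tensor product preservation) is immediate: there is nothing to check beyond the source and target conditions, which are governed entirely by weights, and weights multiply correctly under tensor product, $w(Z\otimes Z') = w(Z)w(Z')$.

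Essential surjectivity is the one point that requires a short observation rather than being completely automatic. An object of $\mcC_{\Aff_1(\kk)}$ is, up to isomorphism, $X^+_{(a,c)}$ for arbitrary $(a,c) \in \Aff_1(\kk)$, whereas $F$ only produces objects assembled from pieces of the form $(a,1)$ and $(0,c)$. However, since $(a,1)\cdot (0,c) = (a,c)$, the object $X^+_{(a,c)}$ is isomorphic in $\mcC_G$ to $X^+_{(a,1)} \otimes X^+_{(0,c)} = F(X^+_a \otimes Y^+_c)$, so $F$ is essentially surjective. The pivotal monoidal structure is preserved because the duals $X^-_a$ and $Y^-_c$ in $\Catone$ map to the pivotal duals of $X^+_{(a,1)}$ and $X^+_{(0,c)}$ in $\mcC_G$, and the cups and caps of Figure~\ref{fig4_020} correspond under $F$ to the pivotal cups and caps of $\mcC_G$ (uniqueness of thin pivotal structures on thin pivotal categories with given duality data removes the need for a case-by-case check). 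The only genuinely nontrivial ingredient is the weight computation; the remainder is bookkeeping forced by thinness.
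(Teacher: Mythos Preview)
Your proposal is correct and follows the same underlying logic as the paper: both categories are thin with isomorphism classes parameterized by $\Aff_1(\kk)$, so they must be equivalent. The paper's own proof is a single sentence (``This is immediate from our results on $\Catone$''), whereas you spell out the explicit functor, the weight computation, essential surjectivity, and the pivotal compatibility; this added detail is helpful but not a different approach.
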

\begin{proof}
    This is immediate from our results on $\Catone$. 
\end{proof}

The interesting part of the construction of $\Catone$ is the invariant $\jmath$ of morphisms related to the 2-cocycle $\brak{a,b}$. 


\subsection{Monoidal category \texorpdfstring{$\Cattwo$}{Cattwo}}\label{subsec_cattwo}

$\quad$

We now extend $\Catone$ to the monoidal category $\Cattwo$ by keeping the same objects as in $\Catone$ by adding dots labelled by elements of $J(\kk)$ floating in regions to the morphism diagrams, see Section~\ref{subsection:Cathelineau_vect_space}. First, we consider category $\CattwoR$ mentioned earlier and introduce a function $\jmath$ evaluating a morphism in $\CattwoR$ to an element of $J(\kk)$. A morphism $\gamma'$ in $\CattwoR$ is given by a morphism $\gamma$ in $\CatoneR$ together with a collection of $J(\kk)$-labelled dots in regions of $\gamma$. Denote by $D(\gamma')$ the set of dots of $\gamma'$. For each dot $d$ of $\gamma'$ let $\ell(d)\in J(\kk)$ be its label and recall from the discussion of \eqref{eq_func_j} the function $\omega(d,\gamma)\in \kk^{\times}$ that to a point $d$ of $\gamma$ not on a multiplicative line assigns its winding index relative to the leftmost region. Define 
\begin{equation}\label{eq_jmath_prime}
    \jmath(\gamma') \ := \ \jmath(\gamma) + \sum_{d\in D(\gamma')} \omega(d,\gamma)\ell(d) \ \in \ J(\kk),
\end{equation}
where $\jmath(\gamma)$ is given by \eqref{eq_func_j} and $\omega(d,\gamma)\ell(d)$  is the element $\ell(d)$ of $J(\kk)$ scaled by $\omega(d,\gamma)$. 

For example, consider the morphism $\gamma'$ in $\CattwoR$ in Figure~\ref{fig4_018b} extending the morphism $\gamma$ in Figure~\ref{fig4_018} by three dots. These dots have winding numbers $1,ec, e$ relative to the leftmost region of the diagram. 
Consequently, 
\begin{equation}\label{eq_jmath_prime_example}
\jmath(\gamma')=\jmath(\gamma)+ x + ec\cdot y + e\cdot z = e \brak{ac,b}+ x + ec\cdot y + e\cdot z 
\end{equation}

\input{fig4_018b}

Define the category $\Cattwo$ as the quotient of $\CattwoR$ by the relation: two morphisms $\gamma_1,\gamma_2:Z_0\lra Z_1$ in $\CattwoR$ are equal in $\Cattwo$ if and only if  $\jmath(\gamma_1)=\jmath(\gamma_2)$. 
Category $\Cattwo$ shares many properties with $\Catone$ and contains $\Catone$ as a subcategory. 
Category $\Cattwo$ is monoidal, with the tensor product given by placing diagrams in parallel. Any morphism in $\Cattwo$ is invertible. There exists a morphism $\gamma:Z_0\lra Z_1$ in $\Cattwo$ if and only if $\jmath(Z_0)=\jmath(Z_1)$. 

Given a morphism $\gamma_0:Z_0\lra Z_1$, any morphism $\gamma_1:Z_0\lra Z_1$ has a unique presentation as $\gamma_0$ with an additional dot labelled by some $x\in J(\kk)$ in the leftmost region. In other words, $\Hom_{\Cattwo}(Z_0,Z_1)$, when nonempty, is naturally a $J(\kk)$-torsor, for the action given by placing dots in the leftmost region. 

Given a diagram $D$ of a morphism $\gamma$ in $\Cattwo$, it can be replaced by a diagram where all the dots are collected in the leftmost region, with the label of each dot twisted by its winding number relative to that region. The labels can then be added to replace several dots by a single dot.  

Factorization of morphisms in $\Catone$ in \eqref{eq_factor_Z} extends to those in $\Cattwo$, adding a dot with an arbitrary label in $J(\kk)$ in the leftmost region.  

Category $\Cattwo$ is similar to the category described in Section~\ref{subsec_monoidal_U}, where $G=\Aff_1(\kk)$ and representation $U=J(\kk)$ with the action of $\Aff_1(\kk)$ given earlier. Evaluation $\jmath$ is analogous to the 2-cocycle evaluation in Section~\ref{subsec_two_cocycle}.  

\vspace{0.07in}
 
\input{fig5_003}

By analogy with Section~\ref{subsec_two_cocycle}, we can introduce a ``blue line", the bottom boundary of a diagram, that can absorb part of the network describing a morphism, in exchange emitting elements of $J(\kk)$.
Figure~\ref{fig5_003} shows that absorbing an $(a,b)$-merge requires emitting $\brak{a,b}$-dot, while an additive crossing can be absorbed at no cost. Figure~\ref{fig5_005}  interprets commutativity of $\brak{a,b}$ via emission and absorption of a merge and an additive crossing. 

\input{fig5_005}

\input{fig5_004}

The two-cocycle property of $\brak{a,b}$ is demonstrated in Figure~\ref{fig5_004}, which is similar to the computation in Figure~\ref{fig3_032} except that in the present case $\R$ acts trivially on $J(\kk)$. 

\input{fig5_008}

\input{fig5_009}

Figure~\ref{fig5_008} shows that a crossing of an additive and a multiplicative line is absorbed at no cost. Likewise, Figure~\ref{fig5_009} says that a local  maximum or minimim of a red line can be absorbed at no cost, and likewise for a merge and orientation reversal of red (multiplicative) lines. Split of a multiplicative line into two lines can be absorbed at no cost as well. 

\begin{remark}
    If one prefers, additive $0$-lines can be removed from diagrams, see Figure~\ref{fig5_015}. 
\end{remark}

\input{fig5_015}

%
%

\section{Diagrammatics of entropy}
\label{subsec_diag_ent}

\subsection{A monoidal category for entropy}
Let us specialize $\kk=\R$. Real vector space $J(\R)$ is enormous, and we reduce the complexity by using the entropy function 
 $H:J(\R)\lra \R$ given on generators of $J(\R)$ by \eqref{eq_map_H} and \eqref{eq_symbol}. Start with the category $\CattwoR$, where elements of $J(\R)$ float in regions of morphism diagrams and for two objects $Z_0,Z_1$ with $w(Z_0)=w(Z_1)$ the space of homs $\Hom_{\CattwoR}(Z_0,Z_1)$ is a $J(\R)$-torsor. Recall the invariant $\jmath$ that takes a morphism in $\CattwoR$ to $J(\R)$ and compose it with the entropy function to get an invariant 
 \begin{eqnarray}\label{eq_jmath_H}
 & & \jmath_H \ : \Hom_{\CattwoR}(Z_0,Z_1)\lra \R, \\ 
 & & \jmath_H:=H\circ \jmath, \ \  \Hom_{\CattwoR}(Z_0,Z_1)\stackrel{\jmath}{\lra} J(\R)\stackrel{H}{\lra} \R. 
 \end{eqnarray} 

Take the quotient of the category $\CattwoR$ by identifying two morphisms $\gamma_0,\gamma_1:Z_0\lra Z_1$ iff $\jmath_H(\gamma_0)=\jmath_H(\gamma_1)$ and denote the resulting category $\CattwoH$. 

A related way to define $\CattwoH$ is to consider a version of the category $\CattwoR$ where dots are labelled by elements of $\R$ rather than $J(\R)$. Otherwise it is the same setup with a morphism $\gamma$ given by a pair of overlapping multiplicative and additive networks together with a finite set of dots $D(\gamma)$, disjoint from the multiplicative network, and labelled by elements of $\R$. For a morphism $\gamma$ in this category there is a well-defined evaluation, which can also be denoted $\jmath_H(\gamma)$, given by 
\begin{equation}\label{eq_func_j_H}
\jmath_H(\gamma) = \sum_{p\in \mathrm{add}(\gamma)} s(p) \omega(p,\gamma)\langle a_p,b_p\rangle_H +\sum_{d\in D(\gamma)} \omega(d,\gamma)\ell(d) \ \in \ \R.
\end{equation}
This formula is analogous to  \eqref{eq_func_j} and \eqref{eq_jmath_prime}, with $\brak{a,b}_H$ given by \eqref{eq_symbol}. We sum over vertices $p$ of the additive network, multiplying the value of the symbol $\brak{a_b,b_p}_H\in \R$ by the winding number $\omega(p,\omega)\in \R^{\times}$ and the sign. 
In addition, sum over dots, scaling the label $\ell(d)$ of a dot $\ell$ by its winding number.

Category $\CattwoH$ is monoidal, with the same objects as in categories $\CatoneR$ and $\CattwoR$. There is a morphism $\gamma:Z_0\lra Z_1$ iff $w(Z_0)=w(Z_1)$. In the latter case, the set of homs from $Z_0$ to $Z_1$ is an $\R$-torsor.  For a morphism $\gamma_0:Z_0\lra Z_1$, any morphism $\gamma_1:Z_0\lra Z_1$ has a unique presentation as $\gamma_0$ with an extra dot labelled by some $x\in \R$ in the leftmost region. Furthermore, all dots in a diagram can 
be moved to the leftmost region, twisted by winding numbers, and added into a single dot. 
Factorization of morphisms works in the same way as for categories $\Catone$ and $\Cattwo$. 

Define $\CatoneH$ to be the subcategory of $\CattwoH$ consisting of dotless diagrams. In $\CatoneH$ there is a unique morphism $Z_0\lra Z_1$ iff $w(Z_0)=w(Z_1)$, and $\CatoneH$ is equivalent to the category $\mcC_G$ in Section~\ref{subsec_monoidal} for $G=\Aff_1(\R)$. 

Entropy is incorporated in the symbols $\brak{a,b}_H$ via \eqref{eq_symbol}. Entropy, when viewed as the function $\jmath_H(\gamma)$ on morphisms $\gamma$ in $\CatoneH$, depends only the source and target objects of the morphisms $\gamma$. In this sense, it may be viewed as a one-dimensional topological theory with defects, which are endpoints of additive and multiplicative lines on the boundary of a network (corresponding to objects $X^{\pm}_a$ and $Y^{\pm}_c$).

\begin{remark}\label{remark_extend} The extension of the 2-cocycle $\brak{a,b}_H$ from $\R$ to $\Aff_1(\R)$ in \eqref{eq_two_coc_o} can be motivated by Figure~\ref{fig5_012} representation of the multiplication in $\Aff_1(\R)$ (and, more generally, in $\Aff_1(\kk)$).  
For the extension considered in the present paper we make only the additive vertex in that diagram to contribute to the cocycle, and one extends the 2-cocycle from $\R$ to $\Aff_1(\R)$ via 
\begin{equation}\label{eq_two_coc}
    \langle (a_1,c_1),(a_2,c_2)\rangle_H \ := \ \langle a_1, c_1 a_2\rangle_H, 
\end{equation}
and likewise replacing $\R$ by $\kk$. 
That it is a (normalized)  2-cocycle on $\Aff_1(\R)$ can be encoded by the skein relation in Figure~\ref{fig8_004}. The same relation encodes the associativity of the multiplication in $\Aff_1(\R)$. This relation follows from the diagrammatic relations listed earlier. 

\input{fig8_004}

\end{remark} 

In Section~\ref{subsec_cattwo} we described category $\Cattwo$ and a way to evaluate a morphism in $\Cattwo$ by using the bottom boundary line of a diagram to absorb parts of a diagram while emitting elements of $J(\kk)$. 
Essentially the same construction applies to evaluate and manipulate morphisms in $\CattwoH$, replacing $\brak{a,b}$ in Figures~\ref{fig5_003},~\ref{fig5_005} and~\ref{fig5_004} by $\brak{a,b}_H\in R$. 

\vspace{0.07in} 

\input{fig8_002}

The network in Figure~\ref{fig8_002} on the left is an example of a morphism in $\CattwoH$ from the object $X^+_{p_1}\otimes \dots \otimes X^+_{p_n}$ to $X^+_1$. When the bottom boundary absorbs this network, it produces several dots that add up to the entropy $H(X)$ of a random variable $X$. Here $X$ takes some values $x_1,\dots, x_n$ with probabilities $p_1,\dots, p_n$, with $p_1+\ldots +p_n=1$, and we may also write $X=(p_1,\dots, p_n)$. 

A special case of this relation is shown in  Figure~\ref{fig5_101} on the left. 

\input{fig5_101}

\vspace{0.07in} 

\input{fig8_007}

Functional equation~\eqref{eq_H} for the entropy and its variations~\eqref{eq_H_symm},~\eqref{eq_H_modified} can be interpreted diagrammatically as equalities of evaluations of the following diagrams. Consider the transformation labelled ``associativity'' in Figure~\ref{fig8_007} from the diagram on the right to the middle. In can indeed be viewed as a version of the associativity for the addition, with one orientation of one of the edges reversed, see Figures~\ref{fig3_011} and~\ref{fig3_010} for similar transformations in an arbitrary group $G$ rather than in $\R$. 

 The merge and the split in the leftmost diagram in Figure~\ref{fig8_007} evaluate to $\langle p,1-p\rangle_H - \langle q,1-q\rangle_H = H(p)-H(q)$, The middle diagram, obtained via the associativity transformation from the one on the left evaluates to $-\langle q, p-q\rangle_H + \langle p-q, 1-p\rangle_H 
= -p H(\frac{q}{p})+(1-q)H(\frac{p-q}{1-q})$. Inserting weight $0$ line, whose endpoints contribute $0$, as shown  in the rightmost diagram, results in the evaluation  
$-\langle q,p-q\rangle_H 
+ \langle q-1,p-q\rangle_H + \langle p-1,1-p\rangle_H -\langle q-1, 1-q\rangle_H = -p H(\frac{q}{p}) 
+ (p-1)H(\frac{q-1}{p-1}) + 0 - 0$. This gives the 4 term relation in \eqref{eq_H}.

\input{fig8_008}

\input{fig8_009}

Transformation of diagrams in Figure~\ref{fig8_008} corresponds to the relation $\brak{a,b}_H=-\brak{-a-b,a}_H$ (modulo the relations $\brak{u,-u}_H=0$). It also corresponds to the symmetry property \eqref{eq_H_5} of the entropy (modulo \eqref{eq_H_4}, which corresponds to the transformation in Figure~\ref{fig5_002} top right). 

These symmetries of the bracket $\brak{a,b}_H$ and the entropy $H(p)$ are described by an action of $S_3$ on the space of parameters $p\in \R$ as shown in Figure~\ref{fig8_009}. 

\vspace{0.07in} 

{\bf Chain rule for the entropy.}
The entropy function for finite random variables can be described via the formula \eqref{eq_H_2} for random variables that take two values and the chain rule~\cite{Faddeev56,Faddeev_online,Khinchin56,BFL11}.  

\input{fig8_003}

Figure~\ref{fig8_003} interprets diagrammatically the simplest instance of the chain rule. Label $Y$ near a vertex denotes a random variable $(p,1-p)$, with the line going out of the corresponding vertex labelled 1. The line gets rescaled by the $1-c$ red line. The other additive vertex on the LHS represents a random variable $Z=(c,1-c)$. Diagram on the right corresponds to the random variable $X=(c,p(1-c),(1-p)(1-c))$. The equality of evaluations in Figure~\ref{fig8_005} is the simplest instance of the chain rule: 
\begin{equation}\label{eq_chain_simplest}
H(X)=H(Z)+(1-c)H(Y).
\end{equation}

\input{fig8_005}

This example can be generalized in various ways. Consider the LHS of Figure~\ref{fig8_006}  encoding finite random variables $Y_1,\dots, Y_n$ with $Y_i=(p_{1,i},\dots, p_{k_i,i})$ and a random variable $Z=(p_1,\dots, p_n)$. Each variable $Y_i$ is represented by a collection of additive lines that merge into an vertex located below the corresponding multiplicative (wavy) line. The lines merge into a line that carries label $1$. Upon crossing the $i$-th multiplicative line, of weight $p_i$, the additive line get scaled to $p_i$. Next, additive lines $p_1,\dots, p_n$ merge into additive line $1$, describing a random variable $Z$. This merge (and the merges for $Y_i$'s) is shown as a single merge, but it can be more accurately defined as a composition of pairs of lines merging and adding their weights, see Figure~\ref{fig8_002} on the left, for instance. 

 On the RHS of Figure~\ref{fig8_006} additive lines at the bottom of the diagram first cross multiplicative lines, then merge in groups, then eventually merge into a single line $1$. This composition of merges is denoted by a single variable 
\[
X=(p_1p_{1,1},\dots, p_1p_{k_1,1},p_2p_{1,2},\dots, p_2p_{k_2,2},\dots, p_np_{k_n,1}, \dots,p_np_{k_n,n}).
\]

\input{fig8_006}

\vspace{0.07in}

 To go from the left to the right figure we  pull the red lines on the LHS of Figure~\ref{fig8_006} downward until they are below corresponding additive vertices. The LHS diagram evaluates to the sum of the entropy of $Z$ and entropies of $Y_i$ scaled by $p_i$, due to the presence of multiplicative lines. 

The equality of the two evaluations is then 
the chain rule for the entropy: 
\begin{equation}\label{eq_ent_chain} 
H(X) \ = \ H(Z)+ \sum_{i=1}^n p_i H(Y_i) . 
\end{equation} 

{\bf Extending the category of finite probabilistic spaces.}
Category $\CattwoH$ contains a  subcategory equivalent to the Baez-Fritz-Leinster category $\FinProb$ as defined in~\cite{BFL11}. Objects of $\FinProb$ are finite sets equipped with probability measures $(X,p)$ and morphisms are measure-preserving functions on finite sets $f:(X,p)\lra (Y,q)$ with $q_j=\sum_{i\in f^{-1}(j)}p_i$. Any morphism $f:X\to Y$ is surjective onto points of $Y$ of strictly positive measure. 

Consider the subcategory $\BFL$ of $\CattwoH$ with  objects given by tensor products 
\[
X^+_{\underline{p}}:=X^+_{p_1}\otimes \cdots \otimes X^+_{p_n}, \ \ 0\le p_i\le 1, \ \sum_{i=1}^n p_i=1, \ \underline{p}:=(p_1,\dots, p_n).
\]
Morphisms are generated by merges $X^+_{p}\otimes X^+_{p'}\lra X^+_{p+p'}$, for $p+p'\le 1$  and transpositions $X^+_{p}\otimes X^+_{p'}\lra X^+_{p'}\otimes X^+_{p}$ (see Section~\ref{subsec_mon_for_dilog} for a list of generating morphisms). 

These are exactly dotless diagrams made only of additive upward-oriented lines of non-negative weight $p$ with $0\le p\le 1$ with no splits, with the total weight $1$ in any cross-section by a horizontal line. Equivalently, these are upward-oriented additive line diagrams with merges and permutations of strands only, total weight $1$, and weight restriction $0\le p\le 1$ for each line.  

 Subcategory $\BFL$ is equivalent to $\FinProb$. An equivalence is given by the functor $F:\BFL\lra \FinProb$ that takes  $X^+_{\underline{p}}$ to a finite $n$-point set with probabilities $p_1,\dots, p_n$ assigned to the points. It is clear how to extend the functor to morphisms. Notice that $\BFL$ is also a subcategory in $\CatoneH\subset \CattwoH$, since diagrams describing morphisms in $\BFL$ are dotless. 

Functoriality property of entropy in $\FinProb$ corresponds to additivity of the entropy on the composition of morphisms, including in the larger category $\CattwoH$, that is, $\jmath(\gamma'\gamma)=\jmath(\gamma')+\jmath(\gamma)$. 
Convex linearity of entropy in $\FinProb$, see~\cite{BFL11}, says that 
\[
H(\lambda Y_1\oplus (1-\lambda)Y_2)=\lambda H(Y_1)+(1-\lambda)H(Y_2),
\]
where random variable $\lambda Y_1\oplus (1-\lambda)Y_2$ is associated to random variables $Y_1,Y_2$ and $\lambda, 0<\lambda<1$ in the obvious way. Convex linearity corresponds to the equality of evaluations in Figure~\ref{fig8_006} when $n=2$ and $\lambda=p_1$. 

\vspace{0.07in}

The category $\BFL$, while equivalent to the Baez--Fritz--Leinster category, is a small part of $\CatoneH$. It is natural to wonder whether the bigger categories $\CatoneH$ and $\CattwoH$, closely related to the affine group $\Aff_1(\R)$, may be relevant to probability or whether some modification of $\CatoneH$ and $\CattwoH$ could extend probability as it appears in the quantum world. We refer to~\cite{BFL11,Brad21,Lein21_entro_div,Gromov13,Vign20,BB15} and follow-up papers for more on the categorical meaning of entropy. Operadic interpretation of entropy by Bradley~\cite{Brad21} and Leinster~\cite{Lein21_entro_div} is closely related to the diagrams in Figure~\ref{fig8_006}. 

In this section we offered a different categorical interpretation of entropy of a finite probability space from what exists in the literature. Our interpretation does not have an immediate application to probability. Still, we hope that the present paper complements existing work in a useful way. 


\subsection{More entropy and cocycle remarks}\label{subsec_more_ent}

\begin{remark} 
    Consider the action of $\Aff_1(\R)$ on the abelian group $(\R,+)$ via $(a,c)u=c |c|^{\alpha-1}u$, where $\alpha\in \R_{>0}\setminus 1$. 
    Note that $ c|c|^{\alpha-1}=\mathrm{sign}(c) |c|^{\alpha}$. 
    One can recover the Tsallis entropy by looking at a suitable 2-cocycle on $\Aff_1(\R)$.
    Tsallis entropy is given by 
    \[
    H_{\alpha}(X)=\frac{1}{\alpha-1}\biggl( 1-\sum_{i\in X}p_i^{\alpha}\biggr) . 
    \]
    Interpreting $1=(\sum_{i\in X}p_i)^{\alpha}$ we can define
    \[ \psi_{\alpha}(x)= \begin{cases} x|x|^{\alpha-1}, & \mathrm{if} \ x \in \R^{\ast},\\
     0 & \mathrm{if} \ x=0
    \end{cases}
    \]
    and 
    \[\brak{a,b}_{\alpha}\ :=\ \psi_{\alpha}(a)+\psi_{\alpha}(b)-\psi_{\alpha}(a+b). 
    \]
    Observe that $\psi_{\alpha}:\R\lra\R$ is a homomorphism. 
    Then 
    \[
    (\alpha-1)H_{\alpha}(p)= \pm \brak{p,1-p}_{\alpha}.
    \]
    Symmetry property holds: $\brak{b,a}_{\alpha}=\brak{a,b}_{\alpha}$. For the scaling property we have $\psi_{\alpha}(ca)=\psi_{\alpha}(c)\psi_{\alpha}(a)$ and 
    \[
    \brak{ca,cb}_{\alpha}= \psi_{\alpha}(c) \brak{a,b}_{\alpha}. 
    \]
    We extend the cocycle to the entire $\Aff_1(\R)$ group via 
    \begin{equation}\label{eq_two_coc_tsallis}
    \langle (a_1,c_1),(a_2,c_2)\rangle_{\alpha} \ := \ \langle a_1, c_1 a_2\rangle_{\alpha}
\end{equation}

In the diagrammatics for the analogue of category $\mcC_{H}^{\circ}$ in this setup, an additive line $a$ crossing a multiplicative line $c$ co-oriented away from it still gets rescaled to $ca$. But a dot labelled $x\in \R$ moving through a multiplicative $c$-line co-oriented away from the dot is scaled to $\psi_{\alpha}(c)x$. In the evaluation function $\jmath_{\alpha}$ the 2-cocycle $\brak{a,b}$ in \eqref{eq_func_j} and the 2-cocycle $\brak{a,b}_H$ in \eqref{eq_symbol} should be replaced by $\brak{a,b}_{\alpha}$ and the action of $\R^{\times} $ on $\R$ by the action via $\psi_{\alpha}$, so that $c \dot x= \psi_{\alpha}(c)x$: 
\begin{equation}\label{eq_func_j_alpha}
\jmath_{\alpha}(\gamma) = \sum_{p\in \mathrm{add}(\gamma)} s(p) \psi_{\alpha}(\omega(p,\gamma))\langle a_p,b_p\rangle_{\alpha} \ \in \ \R.
\end{equation}
and, by analogy with \eqref{eq_jmath_prime},
\begin{equation}\label{eq_jmath_prime_alpha}
    \jmath_{\alpha}(\gamma')  =  \jmath_{\alpha}(\gamma) + \sum_{d\in \gamma'} \psi_{\alpha}(\omega(d,\gamma)\ell(d)) \ \in \ \R
\end{equation}
in the presence of $\R$-weighted dots. 

A cohomological interpretation of the Tsallis entropy is discussed in Vigneaux~\cite{Vign20}.  Another interesting problem is to look for cohomological interpretation of other entropies, including the R\'enyi entropy. P.~Tempesta~\cite{Temp16} has discovered a relation between entropies and formal groups, and it is a natural question whether this relation can be given a cohomological meaning. 
\end{remark}

\begin{remark}
      Recall a non-measurable entropy-like function in \eqref{eq_bad_entropy} given by 
     \[
     h(x)=((dx)^2)/(x(1-x)) \ \mathrm{for}\ x\in\R, \ x\not= 0,1  \ \ \mathrm{and}\  \ h(0)=h(1)=0,
     \]
     where $d:\R\lra \R$ is a $\Q$-linear derivation. Noting that $1/(x(1-x))=1/x+1/(1-x)$, introduce a function $\psi:\R\lra \R$ given by 
     \begin{equation}\label{eq_bad_psi}
     \psi(a)=\begin{cases} (da)^2/a & \mathrm{if}\ a\not=0,1, \\
     0 & \mathrm{if} \ a=0,1. 
     \end{cases} 
     \end{equation}
     Then $h(x)=\psi(x)+\psi(1-x)$. Function $\psi$ is not a non-linear derivation, unlike~\eqref{eq_psi_derivation}, since 
     $\psi(ab)\not=\psi(a)b+a\psi(b)$. 
Instead, we compute 
\[
d(a/(a+b)) = ((da)(a+b)-a(da+db))/(a+b)^2=
((da)b - a(db))/(a+b)^2
\]
and define 
\begin{eqnarray*}
\brak{a,b} & := & (a+b)h(a/(a+b))=(a+b)^3(d(a/(a+b))^2)/(ab), \\
 & = &(a+b)^{-1}  ((da)b - a(db))^2/ab  
 \\
 & = & (da)^2 b/(a(b+a)) + (db)^2 a/(b(a+b)) - 2 (da)(db)/(a+b) 
\end{eqnarray*}
Next, 
\begin{eqnarray*}
 \psi(a)+\psi(b)-\psi(a+b) & = & (da)^2/a+(db)^2/b - (da+db)^2/(a+b)  \\
 & = & (b^2 (da)^2 + a^2 (db)^2 - 2ab(da)(db))/(ab(a+b)) 
   = \brak{a,b}
\end{eqnarray*}
Consequently, 
\begin{equation}
    \brak{a,b} \ = \ \psi(a)+\psi(b)-\psi(a+b), 
\end{equation}
and the 2-cocycle $\brak{a,b}$ on $\R$ is the coboundary of $\psi$. The bracket is symmetric, $\brak{a,b}=\brak{b,a}$, and a direct computation shows the scaling property  
\[
\brak{ca,cb} = c\brak{a,b}.  
\]
On the field $\R$, function $h(x)$ and the 2-cocycle $\brak{a,b}$ are not measurable and cannot be written down explicitly. Instead, start with a base field $\kk$ and  form the field $K=\kk(y)$, where $y$ is a formal variable. Pick $r=r(y)\in K$ and define $dy=r$,
$d(f/g)=(d(f)g-f d(g))/g^2$, for polynomials $f,g\in \kk[y]$, resulting in a derivation $d:K\lra K$. Via formula \eqref{eq_bad_psi} we get a null-homologous $\kk$-linear 2-cocycle $\brak{a,b}:K\times K\lra K$. 

This cocycle extends to a 2-cocycle on the group $\Aff_1(K)$ by 
\begin{equation}\label{eq_two_coc_two}
    \langle (a_1,c_1),(a_2,c_2)\rangle \ := \ \langle a_1, c_1 a_2\rangle, \ \ a_i\in K, c_i\in K^{\ast}, 
\end{equation}
as in~\eqref{eq_two_coc}. We do not classify $r$'s that give rise to non-homologous 2-cocycles in this case but refer to Cathelineau's result~\cite[Corollary 4.9]{Cath11} that the space $\mathrm{H}_2(\Aff_1(K),K_r)$ is infinite-dimensional over $K$. 
\end{remark}

\begin{remark}  
The \emph{unrolled quantum group} $\mathfrak{sl}(2)$, see~\cite{CGP17,CGP17_corr}, 
contains both elements $H$ and $K=q^H$. The trace of the element $HK$ on a representation of  the modified quantum $\mathfrak{sl}(2)$ has the flavor of the von Neumann entropy, being the sum $\sum_i \lambda_i  \log(\lambda_i)$, where $\lambda_i$'s are the eigenvalues of $K$.
\end{remark}

\begin{remark}
{\it Generalized binomials as 2-cocycles.} The binomial coefficient $c(k_1,k_2):=\binom{k_1+k_2}{k_1}=\frac{(k_1+k_2)!}{k_1!k_2!}$ satisfies the 2-cocycle equation 
\[
c(k_1,k_2)c(k_1+k_2,k_3)=c(k_2,k_3)c(k_1,k_2+k_3).
\]
Consequently, it can be viewed as a 2-cocycle on the monoid $(\Z_+,+)$ of non-negative integers under addition, taking values in the abelian group $\Q^{\ast}$. The binomial cocycle is null-homologous, since $c(k_1,k_2)=f(k_1+k_2)f(k_1)^{-1}f(k_2)^{-1}$. It is the coboundary of 1-cochain $f:\Z_+\lra \Q^{\ast}$ given by $f(k)=k!$.  

Likewise, the q-binomial coefficient $c_q(k_1,k_2):=\frac{[k_1+k_2]!}{[k_1]![k_2]!}$, where $[k]!:=[k]\cdots [2][1]$ and $[n]:=\frac{q^n-q^{-n}}{q-q^{-1}}=q^{n-1}+q^{n-3}+\ldots + q^{1-n}$ is a 2-cocycle $c_q:\Z_+\times \Z_+ \lra \Q(q)^{\ast}$, valued in the group of invertible elements of the field of rational functions in $q$ with rational coefficients. Again, this 2-cocycle is null-homologous, being the coboundary of the 1-cochain $f:\Z_+\lra \Q^{\ast}$, $f(k)=[k]!$.  

Binomials and generalized binomials are interpreted cohomologically in~\cite{Vig23} in a more sophisticated way, and we are not aware of a reference or use for this trivial interpretation, as null-homologous 2-cocycles on the monoid $\Z_+$ with values in $\Q^{\ast}$.  

More generally, pick a function $g:\Z_+\lra \kk^{\times}$, where $\kk$ is a field, and define a 1-cochain $f:\Z_+\lra \kk^{\ast}$ by $f(0)=1$ and $f(k)=g(1)g(2)\cdots g(k)$ for $k>0$. Then the generalized binomial coefficient (the Fonten\'e--Ward coefficient~\cite{Vig23}) 
\[
c_g(k_1,k_2):=f(k_1+k_2)f(k_1)^{-1}f(k_2)^{-1}
\]
is the coboundary of $f$. 

\vspace{0.07in}

Note that~\cite{Asymptotics20,Garou09} for $a>b>0$ 
\[
a H\left(\frac{b}{a}\right) = \lim_{n\to\infty} \frac{1}{n}\log\biggl( \binom{an}{bn} \biggr), 
\]
realizing the entropy as a limit via binomial coefficients (set $a=1$ to get $H(b)$). 

\vspace{0.07in}

The factorial function extends to the Gamma function $\Gamma:\R_{>0}\lra \R_{>0}$ by 
\[
\Gamma(x) \ :=  \ \int_0^{\infty} t^{x-1}e^{-t}dt,
\]
with $n!=\Gamma(n+1)$. 
Beta function 
\[
B(x_1,x_2)  := \frac{\Gamma(x_1)\Gamma(x_2)}{\Gamma(x_1+x_2)}= \int_0^1 t^{x_1-1}(1-t)^{x_2-1}dt 
\]
gives rise to a null-homologous 2-cocycle 
\[
\beta:\R_+\times \R_+\lra \R^{\ast}, \ \ 
\beta(x_1,x_2)=B(x_1+1,x_2+1).
\]
Monoid $\R_+$ can be replaced by the larger monoid $\C_+=\{z\in \C|\textsf{Re } z\ge 0\}$, under addition, which still avoids the poles of $\Gamma(z+1)$, resulting in a null-homologous 2-cocycle $\beta:\C_+\times \C_+\lra \C^{\ast}$. Poles of $\Gamma(z)$ provide an obstruction to extending the 2-cocycle $\beta$ to all of $\C$. 
\end{remark}

\begin{remark} {\it The PMI cocycle.} Pointwise Mutual Information (PMI) between a pair of discrete outcomes $x$ and $y$ of random variables $X$ and $Y$ is given by 
\begin{equation}
    \PMI(x,y)  \ = \ \log \frac{p(x,y)}{p(x)p(y)}. 
\end{equation}
Here it makes sense at first to restrict to $p(x),p(y)\not= 0$, to avoid a possible zero in the denominator. 
Even with that restriction, $\PMI(x,y)$ may take value $-\infty$, if $x$ and $y$ are disjoint, that is, $p(x,y)=0$. Thus, $\PMI(x,y)\in \R\cup \{ -\infty\}$. 

Pointwise Mutual Information is commonly used in linguistics and natural language processing (NLP)~\cite{Pointwise_mutual_info,Fano68,CH90,Bouma09}. The original word embedding construction of Mikolov et al.~\cite{MCCD13,MSCCD13} is interpreted as factorizations of the PMI matrix and its variations in Levy and Goldberg~\cite{LG14}. 

Observe that $\PMI$ is a null-homologous 2-cocycle, since 
\[
\PMI(x,y) = \log p(x,y) - \log p(x) - \log p(y) = \log p(x \cap y) - \log p(x) - \log p(y). 
\]
Here $x,y$ are be viewed as elements of the  commutative monoid $\mathsf{M}(\Omega)$ of measurable subsets of a probability space $\Omega$, with $x\sim x'$ if $p(x\Delta x')=0$ and $x+y:=x\cap y$. Random variables $X$ and $Y$ above are discrete measurable functions on $\Omega$. 
Map 
\begin{equation}
\PMI \  : \ \mathsf{M}(\Omega)\times \mathsf{M}(\Omega) \lra \widetilde{\R}, \ \ \widetilde{\R}:=\R \cup \{-\infty\} 
\end{equation}
is a null-homologous 2-cocycle on the commutative monoid $\mathsf{M}(\Omega)$ taking values in the commutative monoid $\widetilde{\R}$. For consistency, we can set $\PMI(\emptyset, y)=-\log p(y)$.  Nevertheless, one runs into the issue that subtraction is not well-defined in $\widetilde{\R}$ when claiming that cocycle $\PMI$ is null-homologous, see a related short discussion close to the end of Remark~\ref{rmk_carry} below. 
\end{remark} 

\begin{remark} {\it Penrose impossibility cocycle.}
    R.~Penrose~\cite{Pen93} gave an amusing proof of the impossibility of the Escher \emph{tribar} shape via the first cohomology group of a non-simply connected plane region, with coefficients in $\R$. It is an interesting question whether such methods may have a practical use, for instance, for detection of computer and AI generated images and videos.  

We discovered this reference by browsing through online threads~\cite{Cor12,Gia17} dedicated to appearances of cocycles and cohomology and refer to these threads for more examples. 
\end{remark}

\begin{remark}\label{rmk_carry}
{\it The carry cocycle.}
When adding two numbers represented in base $N$ there is the carry map on digits $\Z/N\times \Z/N\lra \{0,1\}\subset \Z/N$ given by $(i,j)\mapsto \lfloor \frac{i+j}{N}\rfloor$. It has been pointed out and rediscovered several times~\cite{Dol23,Isa02,DGPV23} that the map is a 2-cocycle which defines a nontrivial extension $0\lra \Z/N\lra \Z/(N^2)\lra \Z/N$ of abelian groups. This 2-cocycle is \emph{not} null-homologous and gives a generator of the cohomology group $\mathsf{H}^2(\Z/N,\Z/N)$. 

The carry $\lfloor \frac{i+j}{N}\rfloor$ can be thought of as \emph{excess} that digits $i$ and $j$ produce when added. Upon reflection, one realizes that many operations with ``excess'' in them can be interpreted as 2-cocycles. Let us provide some examples. 

(a) Consider a set $\mathcal{S}$ of companies. Denote by $p(A)$ yearly profit of a company $A\in \mathcal{S}$. Suppose that if two companies merge, they can streamline operations and produce excess profit $e(A,B)$ per year. Denote by $A+B$ the merged company. Then the excess profit 
\begin{equation}\label{eq_excess_profit}
e(A,B)=p(A+B)-p(A)-p(B).
\end{equation} 
The excess profit is a symmetric 2-cocycle: $A,B,C$ can merge into $A+B+C$ in different orders, with 
\[
p(A,B) + p(A+B,C) = p(B,C)+p(A,B+C),  
\]
where both sides show the total excess profit of going from $A,B,C$ to $A+B+C$. 
The 2-cocycle equation also follows at once from \eqref{eq_excess_profit}. 
Equation \eqref{eq_excess_profit} tells us that 2-cocycle $p$ is null-homologous: $e=dp$, the differential of the one-cochain $p$. 
To phrase this carefully, consider the Boolean monoid $\mathcal{PS}$ of all subsets of $\mathcal{S}$, with addition being the disjoint union. In particular, $A+A=A$ (if a company $A$ merges with itself, it remains $A$). Then $p:\mathcal{PS}\lra \R$ is a one-cochain (a function) from the abelian idempotent monoid $\mathcal{PS}$ to the abelian group of real numbers. Here, for a subset $\mcU\subset \mathcal{S}$, we define $p(\mcU)$ as the profit of the company given by merging all companies in $\mcU$. Furthermore, $e=dp$ is a null-homologous $\R$-valued 2-cocycle, 
\[
e \ : \ \mathcal{PS}\times \mathcal{PS} \lra \R. 
\]
One can further allow a company to split into two companies, going from $A$ to $(A_1,A_2)$, and extend $e$ to the split operation correspondingly. The diagrammatics of split and merge will be similar to the ones developed in the present paper for groups. One can also introduce red lines, describing external changes that modify profits $p(\mcU)$ of $\mcU\in \mathcal{PS}$. It is not clear whether such diagrammatics may be useful.  
\vspace{0.07in} 
   
(b) For a similar example, Let $\mathcal{S}$ be a set of people, $\mathcal{T}$ a set of tasks (or achievements) that people can accomplish. For a subset $\mcU\subset \mathcal{S}$ denote by $t(\mcU)\subset \mathcal{T}$ the set of tasks that the subset $\mcU$ of people can accomplish together. The one-cochain $t:\mathcal{PS}\lra \mathcal{PT}$ is a function from subsets of people to subsets of tasks. 

Consider excess tasks 
\begin{equation}\label{eq_e_tasks}
e(\mcU_1,\mcU_2) \ := \ t(\mcU_1+\mcU_2)\setminus (t(\mcU_1)+t(\mcU_2)). 
\end{equation} 
These are tasks that the union $\mcU_1+\mcU_2:=\mcU_1\cup \mcU_2$ of people can accomplish but that cannot be accomplished individually by $\mcU_1$ or $\mcU_2$. 
Excess tasks is a function 
\[
e \ : \ \mathcal{PS}\times \mathcal{PS}\lra \mathcal{PT}.
\]
Here both $\mathcal{PS}$ and $\mathcal{PT}$ are idempotent commutative monoids of subsets of $\mathcal{S}$ and $\mathcal{T}$, respectively. Equation \eqref{eq_e_tasks} indicates that $e=dt$, the coboundary of the one-cochain $t$. One should be careful here, since $\mathcal{PT}$, the target of maps $t$ and $e$, is not an abelian group, only a monoid, but with an additional operation $\setminus$. Defining cohomology structures in this setup might take one into nonabelian homological algebra~\cite{Grand13,CC19}.

\vspace{0.07in} 

In the above two examples the 2-cocycles are null-homologous. Null-homologous 2-cocycles are less interesting that not null-homologous ones, and it would make sense to look for more examples similar to the carry cocycle, representing nontrivial cohomology classes. 
\end{remark}

\begin{remark} {\it Affine group acting on a bank account.} Let us view the amount of money $x$ one has in a bank account as an element of the abelian group $M=\Q$ (for instance, one may have $\$100.5$ in the account), assuming that the bank  allows negative amounts. One may add or subtract some amount (debits and credits), sending $x$ to $x+b$, for $b\in \Q$. The account may also earn interest, scaling $x$ to $ax$, $a\in \Q^{\ast}$ within a unit time (typically, $a-1$ is a small positive number). 

    We see that the affine group $\Aff_1(\Q)$ acts on the set of possible values $M=\Q$ that money in a bank account can take. The action is not linear but can be made linear by embedding $M$ as an affine line in $\Q^2$ via the map $x\longmapsto (x,1)$, with the action
    \begin{equation}
        \begin{pmatrix}
a & b \\ 0 & 1
\end{pmatrix} \, \begin{pmatrix} x \\ 1\end{pmatrix} \ = \ \begin{pmatrix} ax+b \\ 1
\end{pmatrix}. 
    \end{equation}

    Can this toy example be extended to a nontrivial structure relevant to economics or finance? For instance, would it make sense to replace vectors $(x,1)$ by arbitrary vectors $(x,y)\in \Q^2$ that are acted upon by a subgroup of $\mathsf{GL}(2,\Q)$ other than $\Aff_1(\Q)$, via the usual matrix action 
    \begin{equation}
        \begin{pmatrix}
a_{11} & a_{12} \\ a_{21} & a_{22}
\end{pmatrix} \, \begin{pmatrix} x \\ y\end{pmatrix} \ = \ \begin{pmatrix} a_{11}x+a_{12}y \\ a_{21}x+a_{22}y
\end{pmatrix}? 
    \end{equation}
For another relation of the affine group to finance, note that Kelly's original derivation of his famous betting formula~\cite{Kelly56,Tho11} is based on entropy and information theory, in turn related to $\Aff_1(\R)$ as discussed earlier. 
\end{remark}

\bibliographystyle{amsalpha} 
\bibliography{z_brauer-group}

\end{document}